

\documentclass[11pt,pdftex, reqno]{amsart}
\pdfoutput=1


\usepackage{amsmath, amsthm}
\usepackage{eucal}
\usepackage{palatino}
\usepackage{euler}
\usepackage{amssymb}
\usepackage{amscd}
\usepackage{latexsym}
\usepackage{epsfig}
\usepackage{graphicx, xcolor}
\usepackage{amsfonts}
\usepackage{psfrag}
\usepackage{caption}
\usepackage{setspace}
\usepackage{mathabx}
\usepackage{fullpage}
\usepackage{draftcopy}
\usepackage{tikz}
\usepackage[normalem]{ulem}
\usepackage{pifont}
\usepackage{verbatim}
\usepackage{thmtools}
\usepackage{thm-restate} 
\usepackage{tabmac}
\usepackage{hyperref}
\usepackage{dsfont}
\usepackage{url}
\usepackage{cite}

\setlength{\parindent}{3ex}

\usepackage[margin=1in, headsep=.25in]{geometry}


\input xy
\xyoption{all}
\UseComputerModernTips

\synctex=1


\numberwithin{equation}{section}
\numberwithin{figure}{section}


\newtheorem{theorem}{Theorem}[section]

\newtheorem{lemma}[theorem]{Lemma}
\newtheorem{proposition}[theorem]{Proposition}

\theoremstyle{definition}
\newtheorem{definition}[theorem]{Definition}
\newtheorem{remark}[theorem]{Remark}
\newtheorem{example}[theorem]{Example}


\newcommand{\C}{{\mathbb{C}}}

\newcommand{\Z}{{\mathbb{Z}}}

\newcommand{\N}{{\mathbb{N}}}

\newcommand{\ba}{\backslash}

\newcommand{\wtv}{\operatorname{wt}^v}
\newcommand{\wth}{\operatorname{wt}^h}

\newcommand\sbullet[1][.5]{\mathbin{\vcenter{\hbox{\scalebox{#1}{$\bullet$}}}}}
\newcommand\sasterisk[1][.5]{\mathbin{\vcenter{\hbox{\scalebox{#1}{$\coAsterisk$}}}}}

\newcommand\sstar[1][.5]{\mathbin{\vcenter{\hbox{\scalebox{#1}{$\star$}}}}}

\definecolor{limegreen}{rgb}{0.2, 0.8, 0.2}
\definecolor{amethyst}{rgb}{0.6, 0.4, 0.8}
\definecolor{deeppink}{rgb}{1.0, 0.08, 0.58}

\newcommand{\red}{\textcolor{red}}

\newcommand{\green}{\textcolor{limegreen}}
\newcommand{\blue}{\textcolor{blue}}


\begin{document}

\pagestyle{headings}

\title[An Equivariant Quantum Pieri Rule on Cylindric Shapes]{An Equivariant Quantum Pieri Rule for the Grassmannian on Cylindric Shapes}

\author{Anna Bertiger}
\address{Anna Bertiger, Microsoft, Seattle, WA}
\email{anberti@microsoft.com}
\author{Dorian Ehrlich}
\address{Dorian Ehrlich, New York, NY}
\email{dehrlich014@gmail.com}
\author{Elizabeth Mili\'cevi\'c}
\address{Elizabeth Mili\'cevi\'c, Haverford College, Haverford, PA}
\email{emilicevic@haverford.edu}
\author{Kaisa Taipale}
\address{Kaisa Taipale, C.~H.~Robinson, Eden Prairie, MN}
\email{taipale@umn.edu}

\thanks{AB was supported by the University of Waterloo while working on this project. DE was fully supported by NSF Grant DMS-1600982. EM was partially supported by NSF Grant DMS-1600982, Simons Collaboration Grant 318716, and the Max-Planck-Institut f\"ur Mathematik. KT was supported by University of Minnesota while working on this project.}

\begin{abstract}
The quantum cohomology ring of the Grassmannian is determined by the quantum Pieri rule for multiplying by Schubert classes indexed by row or column-shaped partitions.  We provide a direct equivariant generalization of Postnikov's quantum Pieri rule for the Grassmannian in terms of cylindric shapes, complementing related work of Gorbounov and Korff in quantum integrable systems. The equivariant terms in our Graham-positive rule simply encode the positions of all possible addable boxes within one cylindric skew diagram.  As such, unlike the earlier equivariant quantum Pieri rule of Huang and Li and known equivariant quantum Littlewood-Richardson rules, our formula does not require any calculations in a different Grassmannian or two-step flag variety.
\end{abstract}

\maketitle


\section{Introduction}\label{sec:intro}

Equivariant cohomology is a powerful tool for studying the geometry and topology of algebraic varieties which admit an action by a reductive algebraic group; see the survey \cite{Anderson}. With key ingredients appearing in Cartan's complex of equivariant differential forms \cite{Cartan}, the Borel construction for equivariant singular cohomology dates back to the 1950's \cite{Borel}. The prevailing philosophy in the equivariant setting is to exploit the symmetries arising from the group action by replacing topological questions with a limited amount of algebraic information.  For example, the theory of localization following \cite{GKM} permits the equivariant cohomology of nice topological spaces, such as flag varieties and Grassmannians, to be represented using a graph determined by the fixed points and one-dimensional orbits of the group action; see \cite{GZ} for an overview. Kim's construction of the equivariant Gromov-Witten invariants \cite{Kim} extends these principles to the (small) equivariant quantum cohomology of flag varieties, and thus to modern equivariant quantum Schubert calculus via various equivariant quantum-to-classical phenomena \cite{BuchMihalcea,BMT}.

\subsection{Equivariant quantum Schubert calculus}

The equivariant quantum cohomology ring for the Grassmannian $Gr(m,n)$ has a basis of \emph{Schubert classes}, indexed by partition shapes which fit inside an $m \times (n-m)$ rectangle.  The fundamental question in \emph{equivariant quantum Schubert calculus} is to find combinatorial formulas for the expansion of the product of two Schubert classes in terms of this Schubert basis.  A Graham-positive \emph{equivariant quantum Littlewood-Richardson rule} for $QH^*_T(Gr(m,n))$ is now available, by combining Buch's generalization of Knutson and Tao's equivariant puzzle rule \cite{KT} to two-step flag varieties \cite{Buch2step}, together with Buch and Mihalcea's quantum-to-classical principle equating the equivariant quantum Littlewood-Richardson coefficients for $Gr(m,n)$ to their classical counterparts in a related two-step flag variety \cite{BuchMihalcea}.  

Other formulas for the equivariant quantum Littlewood-Richardson coefficients exist, the first of which was Mihalcea's \emph{equivariant quantum Pieri-Chevalley rule} for multiplying by the Schubert class corresponding to a single box \cite{MihalceaTrans}, which recursively determines the ring structure of $QH^*_{T}(Gr(m,n))$.  Any of the numerous equivariant Littlewood-Richardson rules for $H^*_T(Gr(m,n))$ can be combined with the \emph{equivariant rim hook rule} of the first, third, and fourth authors \cite{BMT} to produce a signed method for computing products in $QH^*_{T}(Gr(m,n))$. Gorbounov and Korff take an integrable systems approach to provide an explicit determinantal formula for the equivariant quantum Littlewood-Richardson coefficients \cite{GKunpub,GK}.

Without performing calculations in a related two-step flag variety, the most general Graham-positive combinatorial formula for the equivariant quantum Littlewood-Richardson coefficients is the \emph{equivariant quantum Pieri rule} for multiplying by a Schubert class indexed by a row or column-shaped partition. Huang and Li proved the first equivariant quantum Pieri rule for any type $A$ partial flag variety in Theorem 3.10 of \cite{HuangLi}, though in the the special case of the Grassmannian, this formula can also be recovered from Robinson's earlier full flag Pieri rule \cite{Robinson}, for which Li, Ravikumar, Sottile, and Yang later provided an alternate geometric proof \cite{LRSY}. We remark that Li and Ravikumar have also generalized the underlying equivariant Pieri rule for the Grassmannian to other classcial Lie types \cite{LiRav}. The critical component in each of the aforementioned Pieri rules is the interpretation of certain equivariant quantum Littlewood-Richardson coefficients as \emph{localizations} at a given $T$-fixed point of a Schubert variety in a  smaller Grassmannian.  Gorbounov and Korff take a different approach in \cite{GKunpub,GK}, providing an equivariant quantum Pieri rule in terms of an action by vicious and osculating walker transfer matrices in the Yang-Baxter algebra.

\begin{figure}[h]

\[
\begin{tikzpicture}[scale=.45]

\draw[-]

(10,10) -- (12,10) -- (12,7) -- (10,7) -- cycle
;

\draw[color=cyan]
(14,13) -- (14,12) -- (13,12) -- (13,11) -- (12,11) -- (12,10) -- (12,9) -- (11,9) -- (11,8) -- (10,8) -- (10,7) -- (10,6) -- (9,6) -- (9,5) -- (8,5) -- (8,4)
;

\draw[color=red]
(14.1,12.9) -- (14.1,11.9) -- (13.1,11.9) -- (13.1,10.9) -- (13.1,9.9) -- (12.1,9.9) -- (12.1,8.9) -- (11.1,8.9) -- (11.1,7.9) -- (11.1,6.9) -- (10.1,6.9) -- (10.1,5.9) -- (9.1,5.9) -- (9.1,4.9) -- (9.1,3.9) -- (8.1,3.9)
;

\draw[color=green]
(10,9) -- (11,9) -- (11,10);

\node[color = limegreen] at (10.5,8.5) {$\sbullet[1.5]$};
\node[color = limegreen] at (11.5,9.5) {$\sbullet[1.5]$};

\draw[dashed,color = gray]
(8,10) -- (10,7) -- (12,4)
(10,13) -- (12,10) -- (14,7);

\node[scale = .6, color = cyan] at (7.4,4.5) {$\mu[0]$};
\node[scale = .6, color = red] at (9.7,4.5) {$\lambda[0]$};

\path[draw,color = white, fill = red!40] (8.1,4) -- (8.1,4.9) -- (9,4.9) -- (9,4) -- cycle;
\path[draw,color = white, fill = red!40] (10.1,7.1) -- (10.1,7.9) -- (11,7.9) -- (11,7.1) -- cycle;
\path[draw,color = white, fill = red!40] (12.1,10) -- (12.1,10.9) -- (13,10.9) -- (13,10) -- cycle;

\node[scale = .6,color = blue] at (9.8,7.5) {$1$};
\node[scale = .6,color = blue] at (10.5,8.05) {$2$};
\node[scale = .6,color = blue] at (11.35,8.35) {$3$};
\node[scale = .6,color = blue] at (11.75,8.65) {$4$};
\node[scale = .6,color = blue] at (12.3,9.35) {$5$};

\node at (7,9) {$\downarrow$};
\node at (9,12) {$\rightarrow$};
\node[scale = .6] at (9.5,11.8) {$j$};
\node[scale = .6] at (7.3,8.8) {$i$};

\node[scale = .6] at (9.25,10) {$(0,0)$};
\node[scale = .6] at (12.7,7) {$(3,2)$};

\node[color = blue] at (8,4) {$\bullet$};
\node[color =  blue] at (14,13) {$\bullet$};

\node[color = blue] at (12,10) {$\bullet$};
\node[color =  blue] at (10,7) {$\bullet$};

\node at (10,10) {$\bullet$};
\node at (12,7) {$\bullet$};


\draw[-]

(17,10) -- (19,10) -- (19,7) -- (17,7) -- cycle
;

\draw[color=cyan]
(21,13) -- (21,12) -- (20,12) -- (20,11) -- (19,11) -- (19,10) -- (19,9) -- (18,9) -- (18,8) -- (17,8) -- (17,7) -- (17,6) -- (16,6) -- (16,5) -- (15,5) -- (15,4)
;

\draw[color=red]
(21.1,13.1) -- (21.1,12.1) -- (21.1,11) -- (20,11) -- (20,10.1) -- (19.1,10.1) -- (19.1,9.1) -- (19.1,8) -- (18,8) -- (18,7.1) -- (17.1,7.1) -- (17.1,6.1) -- (17.1,5) -- (16,5) -- (16,4.1) -- (15.1,4.1)
;

\draw[color = green]
(18,10) -- (18,9);

\draw[dashed,color = gray]
(15,10) -- (17,7) -- (19,4)
(17,13) -- (19,10) -- (21,7)
;

\node[scale = .6] at (16.2,10) {$(0,0)$};
\node[scale = .6] at (19.8,7) {$(3,2)$};

\node[color = blue] at (15,4) {$\bullet$};
\node[color =  blue] at (21,13) {$\bullet$};

\node[color = blue] at (19,10) {$\bullet$};
\node[color =  blue] at (17,7) {$\bullet$};

\node at (17,10) {$\bullet$};
\node at (19,7) {$\bullet$};

\node[color = limegreen] at (18.5,9.5) {$\sbullet[1.5]$};

\path[draw,color = white, fill = red!40] (15.1,4.2) -- (15.1,4.9) -- (15.9,4.9) -- (15.9,4.2) -- cycle;
\path[draw,color = white, fill = red!40] (16.1,5.1) -- (16.1,5.9) -- (17,5.9) -- (17,5.1) -- cycle;
\path[draw,color = white, fill = red!40] (17.1,7.2) -- (17.1,7.9) -- (17.9,7.9) -- (17.9,7.2) -- cycle;
\path[draw,color = white, fill = red!40] (17.1,7.2) -- (17.1,7.9) -- (17.9,7.9) -- (17.9,7.2) -- cycle;
\path[draw,color = white, fill = red!40] (18.1,8.1) -- (18.1,8.9) -- (18.9,8.9) -- (18.9,8.1) -- cycle;
\path[draw,color = white, fill = red!40] (19.1,10.15) -- (19.1,10.9) -- (19.9,10.9) -- (19.9,10.15) -- cycle;
\path[draw,color = white, fill = red!40] (20.1,11.1) -- (20.1,11.9) -- (21,11.9) -- (21,11.1) -- cycle;

\node[scale = .6, color = blue] at (16.8,7.5) {$1$};
\node[scale = .6, color = blue] at (17.5,8.05) {$2$};
\node[scale = .6, color = blue] at (17.8,8.5) {$3$};
\node[scale = .6, color = blue] at (18.5,9) {$4$};
\node[scale = .6, color = blue] at (19.3,9.35) {$5$};

\node[scale = .6,color = cyan] at (14.4,4.5) {$\mu[0]$};
\node[scale = .6,color = red] at (16.6,4.5) {$\lambda[0]$};


\draw[-]

(24,10) -- (26,10) -- (26,7) -- (24,7) -- cycle
;

\draw[color=cyan]
(28,13) -- (28,12) -- (27,12) -- (27,11) -- (26,11) -- (26,10) -- (26,9) -- (25,9) -- (25,8) -- (24,8) -- (24,7) -- (24,6) -- (23,6) -- (23,5) -- (22,5) -- (22,4)
;

\draw[color=red]
(29.1,13) -- (29.1,11.9) -- (28.1,11.9) -- (27.1,11.9) -- (27.1,10.9) -- (27.1,9.9) -- (27.1,8.9) -- (26.1,8.9) -- (25.1,8.9) -- (25.1,7.9) -- (25.1,6.9) -- (25.1,5.9) -- (24.1,5.9) -- (23.1,5.9) -- (23.1,4.9) -- (23.1,3.9)
;

\draw[color = green]
(25,9) -- (24,9)
;

\path[draw,color = white, fill = red!40] (22.1,4.1) -- (22.1,4.9) -- (23,4.9) -- (23,4.1) -- cycle;
\path[draw,color = white, fill = red!40] (24.1,6) -- (24.1,6.9) -- (25,6.9) -- (25,6) -- cycle;
\path[draw,color = white, fill = red!40] (24.1,7.1) -- (24.1,7.9) -- (25,7.9) -- (25,7.1) -- cycle;
\path[draw,color = white, fill = red!40] (26.1,9) -- (26.1,9.95) -- (27,9.95) -- (27,9) -- cycle;
\path[draw,color = white, fill = red!40] (26.1,10) -- (26.1,10.9) -- (27,10.9) -- (27,10) -- cycle;
\path[draw,color = white, fill = red!40] (28.1,12) -- (28.1,12.9) -- (29,12.9) -- (29,12) -- cycle;

\node[scale = .6] at (23.2,10) {$(0,0)$};
\node[scale = .6] at (26.8,7) {$(3,2)$};

\node[color = blue] at (22,4) {$\bullet$};
\node[color =  blue] at (28,13) {$\bullet$};

\node[color = blue] at (26,10) {$\bullet$};
\node[color =  blue] at (24,7) {$\bullet$};

\node at (24,10) {$\bullet$};
\node at (26,7) {$\bullet$};

\node[color = limegreen] at (24.5,8.5) {$\sbullet[1.5]$};

\node[color = cyan,scale = .6] at (21.4,4.5) {$\mu[0]$};
\node[color = red,scale = .6] at (23.7,4.5) {$\lambda[1]$};

\node[color = blue,scale = .6] at (23.8,7.5) {$1$};
\node[color = blue,scale = .6] at (24.5,8.05) {$2$};
\node[color = blue,scale = .6] at (25.3,8.4) {$3$};
\node[color = blue,scale = .6] at (25.55,9) {$4$};
\node[color = blue,scale = .6] at (25.8,9.5) {$5$};

\draw[dashed,color = gray]
(22,10) -- (24,7) -- (26,4)
(24,13) -- (26,10) -- (28,7);
 

\draw[-]
(31,10) -- (33,10) -- (33,7) -- (31,7) -- cycle
;

\draw[color=cyan]
(35,13) -- (35,12) -- (34,12) -- (34,11) -- (33,11) -- (33,10) -- (33,9) -- (32,9) -- (32,8) -- (31,8) -- (31,7) -- (31,6) -- (30,6) -- (30,5) -- (29,5) -- (29,4)
;

\draw[color=red]
(36,13) -- (36,12) -- (35,12) -- (35,11) -- (34,11) -- (34,10) -- (34,9) -- (33,9) -- (33,8) -- (32,8) -- (32,7) -- (32,6) -- (31,6) -- (31,5) -- (30,5) -- (30,4)
;

\node[scale = .6] at (30.2,10) {$(0,0)$};
\node[scale = .6] at (33.8,7) {$(3,2)$};

\node at (31,10) {$\bullet$};
\node at (33,7) {$\bullet$};

\node[scale = .6,color = cyan] at (28.4,4.5) {$\mu[0]$};
\node[scale = .6,color = red] at (30.6,4.4) {$\lambda[1]$};

\path[draw,color = white, fill = red!40] (29.1,4.1) -- (29.1,4.9) -- (29.9,4.9) -- (29.9,4.1) -- cycle;
\path[draw,color = white, fill = red!40] (30.1,5.1) -- (30.1,5.9) -- (30.9,5.9) -- (30.9,5.1) -- cycle;
\path[draw,color = white, fill = red!40] (31.1,6.1) -- (31.1,6.9) -- (31.9,6.9) -- (31.9,6.1) -- cycle;
\path[draw,color = white, fill = red!40] (31.1,7.1) -- (31.1,7.9) -- (31.9,7.9) -- (31.9,7.1) -- cycle;
\path[draw,color = white, fill = red!40] (32.1,8.1) -- (32.1,8.9) -- (32.9,8.9) -- (32.9,8.1) -- cycle;
\path[draw,color = white, fill = red!40] (33.1,9.1) -- (33.1,9.95) -- (33.9,9.95) -- (33.9,9.1) -- cycle;
\path[draw,color = white, fill = red!40] (33.1,10.05) -- (33.1,10.9) -- (33.9,10.9) -- (33.9,10.05) -- cycle;
\path[draw,color = white, fill = red!40] (34.1,11.1) -- (34.1,11.9) -- (34.9,11.9) -- (34.9,11.1) -- cycle;
\path[draw,color = white, fill = red!40] (35.1,12.1) -- (35.1,12.9) -- (35.9,12.9) -- (35.9,12.1) -- cycle;

\node[color = blue] at (29,4) {$\bullet$};
\node[color =  blue] at (35,13) {$\bullet$};

\node[color = blue] at (33,10) {$\bullet$};
\node[color =  blue] at (31,7) {$\bullet$};

\node[scale = .6,color = blue] at (30.8,7.5) {$1$};
\node[scale = .6,color = blue] at (31.5,8.05) {$2$};
\node[scale = .6,color = blue] at (31.8,8.5) {$3$};
\node[scale = .6,color = blue] at (32.5,9) {$4$};
\node[scale = .6,color = blue] at (32.8,9.5) {$5$};

\draw[dashed,color = gray]
(29,10) -- (31,7) -- (33,4)
(31,13) -- (33,10) -- (35,7);


;

\end{tikzpicture}
\]

\caption{Equivariant cylindric shapes to calculate $\sigma_{(1)^3} \star \sigma_{(2,1)} \in QH^*_T(Gr(3,5))$.}\label{fig:ABsintro}
\end{figure}
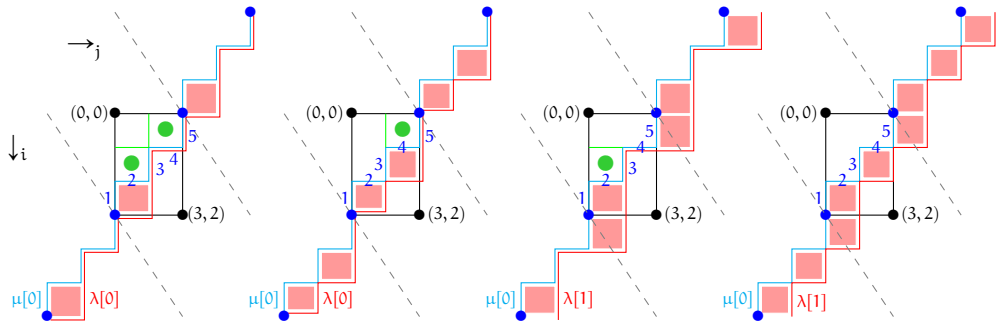

The goal of this paper is to provide a closed combinatorial Graham-positive equivariant quantum Pieri rule for $Gr(m,n)$, formulated in terms of an equivariant generalization of \emph{cylindric shapes}; see Figure \ref{fig:ABsintro} for an illustration which is fully explained in Example \ref{ex:thmexfull}.  Introduced by Gessel and Krattenthaler \cite{GesselKratt}, cylindric shapes have since appeared in many mathematical contexts.  In representation theory, cylindric diagrams enter Suzuki and Vazirani's work on representations of the double affine Hecke algebra \cite{SuzVaz}. In mathematical physics, Korff defines cylindric versions of specializations of skew Macdonald polynomials \cite{Korff}, which arise as partition functions of vertex models obtained by solving the Yang-Baxter equation. Arising from conjectures of McNamara \cite{McN} recently settled by Lee \cite{Lee}, the cylindric (skew) Schur functions are a family of symmetric functions indexed by cylindric shapes, and they play an important role in algebraic combinatorics. Central in Lee's work is the result of Lam that all cylindric Schur functions are affine Schur functions \cite{LamAJM}, which provides a connection to positroid varieties as studied by Knutson, Lam, and Speyer \cite{KLS}. Cylindric shapes also arise in previous works on the quantum cohomology of the Grassmannian, implicitly in the development of quantum Kostka numbers by Bertram, Ciocan-Fontanine, and Fulton \cite{BCFF}, as well as explicitly in the quantum Pieri rule of Postnikov \cite{Postnikov}; see Theorem \ref{thm:post} below. Gorbounov and Korff's version of the equivariant quantum Pieri rule translates the graphical calculus of non-intersecting lattice paths into the language of toric skew shapes  \cite{GKunpub,GK}.

\subsection{Statement of the main theorem}

Postnikov's version of the quantum Pieri rule provides an affine approach to $QH^*(Gr(m,n))$, and several additional quantum-to-affine connections from \cite{Postnikov} are solidified in Knutson, Lam, and Speyer's related work on positroid varieties \cite{KLS}. A key motivation for our search for particular equivariant generalizations of these quantum-to-affine phenomena is the parabolic Peterson isomorphism \cite{Pet,LamShimPet}, which implies that the equivariant homology of the affine Grassmannian surjects onto $QH^*_{T}(Gr(m,n))$, up to localization of the quantum parameter, in a manner which preserves the relevant Schubert calculus.  

Inspired by these affine connections, we present an equivariant generalization of the quantum Pieri rule from \cite{Postnikov}, or equivalently closed combinatorial formulas for the equivariant quantum Pieri expansions provided in \cite{GKunpub,GK}, as our main result.  In this context, the equivariant terms are generated by \emph{addable boxes} on the cylindric skew shape, which are depicted by the boxes with $\green{\sbullet[1.5]}$ in Figure \ref{fig:ABsintro} and whose contributions are denoted by $\alpha$ in Theorem \ref{thm:main} below. For the reader who is familiar with similar combinatorial formulas, the following is our main theorem statement.  All terminology is made precise in Section \ref{sec:background}, but here we describe the key components in the product as they relate to Figure \ref{fig:ABsintro}, with forward references to the relevant definitions in Section \ref{sec:background}.

\begin{restatable}[Equivariant Quantum Pieri Rule on Cylindric Shapes]{thm}{EqQPieri} 
\label{thm:main}
For any integers $1 \leq p \leq m$ and $1 \leq k \leq n-m$ and any partition $\mu \in P_{mn}$, we have
\begin{align*}
\sigma_{(1)^p} \star \sigma_\mu&=\sum\limits_{0 \leq r \leq p} \ \sum_{\substack{ \lambda/d/\mu = v^r  \\ v^r \rightarrow v^p  }} q^d\prod_{\alpha \in v^p  \ba v^r}\wtv(\alpha) \sigma_\lambda \quad \text{and} \\
\sigma_{(k)} \star \sigma_\mu&=\sum\limits_{0 \leq r \leq k} \ \sum_{\substack{ \lambda/d/\mu = h^r  \\ h^r \rightarrow h^k  }} q^d\prod_{\alpha \in h^k  \ba h^r}\wth(\alpha) \sigma_\lambda 
\end{align*}
in $QH^*_{T}(Gr(m,n)).$  Since $\lambda/d/\mu$ is a vertical (resp.~horizontal) strip, then $d \in \{0,1\}$. 
\end{restatable}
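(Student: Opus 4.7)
The plan is to prove Theorem 1.1 by induction on the Pieri size $p$ (respectively $k$), building on Mihalcea's equivariant quantum Pieri--Chevalley rule from \cite{MihalceaTrans} as the base case $p = 1$. In that case, the claimed formula reduces to Mihalcea's once we observe that a single-box cylindric strip $\lambda/d/\mu = v^1$ is either a corner added to $\mu[0]$ (contributing $q^0$) or a corner of $\mu[1]$ wrapped around the cylinder (contributing $q^1$), while the unique intermediate strip is $v^0 = \emptyset$, which either contributes trivially or contributes a single equivariant factor matching the equivariant term in the Chevalley rule. The horizontal formula for $\sigma_{(k)} \star \sigma_\mu$ follows by a parallel induction on $k$, or by transferring through the transposition symmetry of cylindric shapes which exchanges $\wtv$ and $\wth$.

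For the inductive step, I would exploit associativity to compute
\[
\sigma_{(1)} \star \bigl( \sigma_{(1)^{p-1}} \star \sigma_\mu \bigr) = \bigl( \sigma_{(1)} \star \sigma_{(1)^{p-1}} \bigr) \star \sigma_\mu.
\]
By the inductive hypothesis, the inner product on the left expands as a sum over cylindric vertical strips $\nu/d'/\mu = v^{p-1}$ of size $p-1$, and applying Mihalcea's Chevalley rule to each term adds one more box to the strip (possibly crossing the cylindric boundary and picking up a factor of $q$), with equivariant weights matching $\wtv$ of the newly-added addable boxes. On the right, the $p = 1$ case expands $\sigma_{(1)} \star \sigma_{(1)^{p-1}}$ as $\sigma_{(1)^p} + \sigma_{(2, 1^{p-2})}$ plus equivariant and quantum corrections. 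Isolating $\sigma_{(1)^p} \star \sigma_\mu$ then requires computing the expansions of those correction terms against $\sigma_\mu$, which are controlled either by the Knutson-Tao equivariant Pieri rule for $H^*_T(Gr(m,n))$ combined with the equivariant rim hook rule of \cite{BMT}, or directly by the equivariant quantum Littlewood-Richardson rule via Buch's two-step puzzles in \cite{Buch2step}. The key combinatorial identity is that summing the product $\prod_{\alpha \in v^p \setminus v^r} \wtv(\alpha)$ over all intermediate choices of cylindric strip $v^r \subseteq v^p$ telescopes with the iterated Chevalley additions, each step contributing either a genuinely new corner (non-equivariant) or a re-used corner (equivariant $\wtv(\alpha)$).

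The main obstacle is the cylindric wrap in the regime $d = 1$. Mihalcea's Chevalley rule is originally stated on straight Young diagrams, and one must verify that it lifts coherently to cylindric shapes by placing the added box at the correct boundary position with the correct $T$-weight, which ultimately amounts to a careful analysis of the $T$-fixed points of the moduli space of degree-one rational curves through a given Schubert variety. Two independent sanity checks confirm the formula: at $t = 0$ the right-hand side must recover Postnikov's non-equivariant quantum Pieri rule (Theorem \ref{thm:post}), and at $q = 0$ it must reduce to the classical equivariant Pieri rule for the Grassmannian. I expect the examples encoded in Figure \ref{fig:ABsintro} to serve both as motivation and as low-dimensional test cases against which to calibrate the bookkeeping of addable-box weights across the cylindric boundary.
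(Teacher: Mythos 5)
Your overall strategy---induction on $p$ via Mihalcea's equivariant quantum Chevalley rule and associativity---is a reasonable instinct, and it is true that the Chevalley rule determines $QH^*_T(Gr(m,n))$ recursively. However, there is a genuine gap in the inductive step. Expanding $\sigma_{(1)} \star \sigma_{(1)^{p-1}}$ by the Chevalley rule produces not only $\sigma_{(1)^p}$ but also $\sigma_{(2,1^{p-2})}$ and an equivariant multiple of $\sigma_{(1)^{p-1}}$; isolating $\sigma_{(1)^p}\star\sigma_\mu$ therefore requires the full expansion of $\sigma_{(2,1^{p-2})}\star\sigma_\mu$, which is not a Pieri product and is not covered by your inductive hypothesis. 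Saying that this correction term is ``controlled'' by Buch's two-step puzzle rule or by the rim hook rule does not close the argument: those rules compute the correction in an entirely different combinatorial language, and the whole content of the theorem is the assertion that, after the subtraction, what remains is exactly the sum over extensions $v^r\rightarrow v^p$ weighted by $\prod\wtv(\alpha)$. The ``key combinatorial identity'' that the intermediate strips telescope against iterated Chevalley additions is precisely the statement that needs proof, and no argument for it is given. Moreover, Mihalcea's recursion is signed, so Graham-positivity of the result would not be manifest from this route, and your base-case claim that the $r=0$ contribution $\sum_\alpha \wtv(\alpha)\,\sigma_\mu$ matches the diagonal term of the Chevalley rule is itself an unverified (though plausible) identity.

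For comparison, the paper avoids the recursion entirely. It proves the classical ($d=0$) terms by matching them against the equivariant Pieri rule of Huang and Li (Theorem \ref{T:HuangLi}): Proposition \ref{prop:PsiAB} shows that the sum of products of addable-box weights over all extensions $v^r\rightarrow v^p$ of $\lambda/\mu$ equals a polynomial $\Psi(\lambda_\mu,p')$ attached to the join-and-cut partition $\lambda_\mu$, and Lemma \ref{lem:Psielem} together with the localization formula of Theorem \ref{thm:facSchur} identifies $\Psi(\lambda_\mu,p')$ with the localization $\xi^{(1)^{p'}}_{m'}(\lambda_\mu)$ appearing in Huang--Li's rule. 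The quantum ($d=1$) terms then come from the equivariant rim hook rule (Theorem \ref{thm:eqrimhook}) applied in $Gr(m,n+1)$, and the row case follows from the column case by level-rank duality (Theorem \ref{thm:levelrank})---note that transposition alone does not exchange $\wtv$ and $\wth$; one also needs the involution $-w_0\colon t_i\mapsto -t_{n+1-i}$ on the torus weights. If you want to salvage your approach, you would need to prove the telescoping identity directly, which amounts to verifying that the proposed right-hand side satisfies the Chevalley recursion; that is a legitimate but substantial piece of combinatorics that your sketch does not supply.
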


The inner sum in Theorem \ref{thm:main} ranges over cylindric skew diagrams $\lambda/d/\mu$ which are vertical (resp.~horizontal) strips, as depicted by the red shaded boxes in Figure \ref{fig:ABsintro}.  These vertical (resp.~horizontal) $r$-strips are then extended to full $p$- (resp.~$k$)-strips by adding boxes which preserve the nature of the original vertical (resp.~horizontal) strip, as do the boxes containing $\green{\sbullet[1.5]}$ in Figure \ref{fig:ABsintro}; see Definition \ref{def:AB} for details. The weight of each addable box is then easily calculated from several statistics which record the location of the box $\green{\sbullet[1.5]}$ within the portion of the skew shape contained in the ambient $m\times (n-m)$ rectangle, outlined in black in Figure \ref{fig:ABsintro}. We make the weight of an addable box precise in Definition \ref{def:wtaddbox}, and we explicitly record the weights corresponding to each of the four cylindric shapes appearing in Figure \ref{fig:ABsintro} in Example \ref{ex:thmexfull}. It is immediate from Definition \ref{def:wtaddbox} that both formulas in Theorem \ref{thm:main} are manifestly Graham-positive.

In comparison to other known equivariant quantum Pieri and/or Littlewood-Richardson rules, Theorem \ref{thm:main} more directly captures the quantum-to-affine phenomenon which governs the ring structure of $QH^*_{T}(Gr(m,n))$ via the parabolic Peterson isomorphism, as explicated in \cite{CM} in the non-equivariant case. In addition, each of the existing closed combinatorial formulas for these products typically requires doing calculations in many different related two-step flags or smaller Grassmannians in order to calculate a single Pieri product; compare \cite{HuangLi, BuchMihalcea, Buch2step}. We emphasize, by contrast, that Theorem \ref{thm:main} permits an entire equivariant quantum Pieri product to be carried out directly on the skew shapes $\lambda/d/\mu$ as in Figure \ref{fig:ABsintro}, which fully illustrates the product $\sigma_{\tableau[pcY]{\\ \\ \\ }} \star \sigma_{\tableau[pcY]{& \\ \\ }} \in QH^*_T(Gr(3,5))$; see Example \ref{ex:thmexfull}.  As such, even the classical case of Theorem \ref{thm:main} is quite distinct from existing formulations of the equivariant Pieri and Littlewood-Richardson rules for $H^*_{T}(Gr(m,n))$.  Moreover, as we explain in the next subsection, Theorem \ref{thm:main} provides an ideal starting point for establishing equivariant generalizations of many related phenomena in combinatorics, representation theory, and mathematical physics, rather than serving exclusively as an illuminating and convenient means to an equivariant quantum Schubert calculus end.

The proof of Theorem \ref{thm:main} proceeds by comparing the classical terms indexed by cylindric shapes with the equivariant Pieri rule from \cite{HuangLi} reviewed here as Theorem \ref{T:HuangLi}. More specifically, we prove in Proposition \ref{prop:PsiAB} that the weight determined by a configuration of addable boxes is the same whether calculated on the cylindric skew shape $\lambda/\mu$ or on the partition $\lambda_\mu$ defined in \cite{HuangLi} by a join-and-cut algorithm which passes to a smaller Grassmannian.  In Lemma \ref{lem:Psielem}, we then equate the equivariant Littlewood-Richardson coefficients appearing in Theorem \ref{thm:main} with certain specializations of elementary factorial Schur polynomials. These specialized factorial Schur polynomials agree with the classical terms appearing in Huang and Li's equivariant Pieri formula via the localization formula due to \cite{KT,LRS,IkedaNaruse} reviewed as Theorem \ref{thm:facSchur}.  Our equivariant quantum Pieri rule on cylindric shapes then follows by applying the equivariant rim hook rule of the first, third, and fourth authors\cite{BMT} reviewed as Theorem \ref{thm:eqrimhook}.

\subsection{Discussion of related work}
We now highlight some applications of Theorem \ref{thm:main} suggested by related literature in the non-equivariant setting. Although the appropriate equivariant analog is often a direct consequence of the combinatorics developed here, an in-depth pursuit of any of the generalizations discussed in this subsection is beyond the scope of the present paper.

Both \cite{Postnikov} and \cite{BCFF} define quantum Kostka numbers by iterating the quantum Pieri rule.  These quantum Kostka numbers are the coefficients in the monomial expansion of the cylindric Schur functions, a specialization of which give the toric Schur polynomials whose Schur expansions encode all quantum Littlewood-Richardson coefficients; see \cite[Theorem 5.3]{Postnikov}. Iterating the equivariant quantum Pieri rule in Theorem \ref{thm:main} thus yields an immediate equivariant generalization for each of these phenomena, which all carry important representation-theoretic meaning in the non-equivariant setting. For example, quantum Kostka numbers coincide with Wess-Zumino-Novikov-Witten (WZNW) fusion coefficients; compare the generalization \cite[Conjecture 1.1]{Korff} concerning cylindric skew Macdonald functions, special cases of which connect to Kostka-Foulkes polynomials, graded characters of Demazure modules of the current algebra, and Feigin-Loktev fusion products of Kirillov-Reshetikhin modules. To date, the only explicit reference to equivariant quantum Kostka numbers in the literature is the unpublished work of Gorbounov and Korff \cite[Corollary 6.27]{GKunpub}, in which they are defined by iterating an equivariant quantum Pieri rule phrased in terms of vicious and osculating walkers. As such, Theorem \ref{thm:main} also provides a simple combinatorial model for performing such calculations in quantum integrable systems.

The cylindric Schur functions of \cite{Postnikov} are examples of affine Schur functions \cite{LamAJM}, also known as the dual $k$-Schur functions which represent Schubert classes in the cohomology of the affine Grassmannian, as one might expect by applying the parabolic Peterson isomorphism \cite{Pet,LamShimPet}.  Analogously, the equivariant version of the cylindric Schur functions obtained by iterating Theorem \ref{thm:main} should give rise to a family of affine double Schur functions.  By \cite[Theorem 27]{LSkDouble}, specializing one set of variables in an affine double Schur function recovers the localizations of the corresponding affine Schubert class at a $T$-fixed point, deepening the geometric meaning of this family of symmetric functions, and simultaneously illuminating the critical role of localization in the proof of Theorem \ref{thm:main}.  The equivariant $k$-Kostka numbers are defined in \cite{LSkDouble} by expanding the affine double Schur functions in terms of the double monomial symmetric functions, or equivalently by iterating the affine equivariant Pieri rule of \cite{LSPieri}, which offers an affine approach to studying equivariant quantum Kostka numbers. The authors also expect the cyclic factorial Schur functions defined in \cite{BMT} to configure into this same framework of affine symmetric functions.

Many additional applications of Theorem \ref{thm:main} are implicit in Postnikov's wide-ranging work \cite{Postnikov}.
There are natural interpretations of the correspondence between cylindric shapes and non-vanishing monomials in the affine nil-Temperley-Lieb algebra, which would give rise to an equivariant generalization of \cite[Proposition 8.5]{Postnikov}, as well as its physical interpretation in \cite[Theorem 10.11]{KorffStroppel}.  This perspective also yields equivariant analogs of the pairwise commuting Dunkl elements in the quadratic algebra which Fomin and Kirillov use to describe the quantum cohomology of the complete flag variety \cite{FomKir}.  Postnikov defines a representation of the symmetric group by its action on the toric Specht module,  and Yoo develops a cylindric version of jeu de taquin to study certain families of these toric Specht modules \cite{Yoo}.  Since the quantum Littlewood-Richardson coefficients conjecturally determine the decomposition of toric Specht modules into irreducible components, an algebraic explanation of the Graham-positivity of the equivariant Littlewood-Richardson coefficients results from this representation-theoretic approach.  See \cite[Section 9]{Postnikov} for further discussion of open problems, each of which now has a natural equivariant analogue.

\subsection{Organization of the paper}

We begin with some background on the equivariant quantum Schubert calculus of the Grassmannian in Section \ref{sec:background}.  In particular, we formally review Postnikov's quantum Pieri rule on cylindric shapes from \cite{Postnikov} as Theorem \ref{thm:post}, and we make all additional terminology appearing in Theorem \ref{thm:main}, our equivariant generalization of Postnikov's quantum Pieri rule, precise in Section \ref{sec:eqQPieri}. The localizations occurring in the equivariant Pieri rule of \cite{HuangLi} are developed in Section \ref{sec:Localizations}, which permits a comparison to the equivariant Littlewood-Richardson coefficients from Theorem \ref{thm:main} by recognizing both as certain specializations of factorial Schur polynomials in Section \ref{sec:FacSchurProof}. The proof of Theorem \ref{thm:main} then follows in Section \ref{sec:proof} by applying the equivariant rim hook rule of \cite{BMT}.

\subsection*{Acknowledgements}

EM thanks Harry Tamvakis for an early discussion encouraging the authors of \cite{BMT} to develop equivariant generalizations of several results in \cite{Postnikov}. Foundational work on this project was carried out by AB, EM, and KT during the Women and Mathematics Program on ``Aspects of Algebraic Geometry'' at the Institute for Advanced Study in May 2015. EM and KT gratefully acknowledge the support of the Max-Planck-Institut f\"ur Mathematik, which hosted two long-term sabbatical visits by EM in 2016 and 2020, in addition to a collaborative visit by KT in May 2016, jointly funded by EM's Simons Collaboration Grant.  A substantial part of this work was also carried out while EM served as a Director's Mathematician in Residence at the Budapest Semesters in Mathematics in Summer 2018, jointly supported by the R\'enyi Alfr\'ed Matematikai Kutat\'oint\'ezet.   DE is especially grateful to the National Science Foundation for fully funding his collaborative work on this project through EM's grant from the Division of Mathematical Sciences. This work also benefited from EM's discussions on related NSF-supported projects with Alex Kane and Chris Gandolfo-Lucia. KT additionally thanks Takeshi Ikeda and Allen Knutson for long-ago conversations about equivariant generalizations of quantum cohomology. Finally, the authors are very grateful to the anonymous referee for an especially careful reading which resulted in many improvements throughout the paper.


\section{Quantum Pieri rules on cylindric shapes}\label{sec:background}

The purpose of this section is to briefly develop the Schubert calculus required to formally state our main theorem. We review the combinatorics of the quantum cohomology of the Grassmannian in Section \ref{sec:QCoh}, and we formulate Postnikov's quantum Pieri rule on cylindric shapes in Section \ref{sec:QPieri}. We pass to the equivariant setting in Section \ref{sec:eqQCoh}, and we present our equivariant quantum Pieri rule on cylindric shapes as Theorem \ref{thm:main} in Section \ref{sec:eqQPieri}. Throughout the paper, we fix a pair of integers $m,n \in \N$.

\subsection{Quantum cohomology of the Grassmannian}\label{sec:QCoh}

Let $G = SL_n$, fix a Borel subgroup $B$ and a split maximal torus $T$, and denote the associated Weyl group by $W = S_n$.  Fix a Cartan subalgebra $\mathfrak{h}$ of $\mathfrak{g} = \operatorname{Lie}(G)$. The choice of $B$ determines a set of positive roots $R^+$, in the sense that $\mathfrak{b} = \operatorname{Lie}(B) = \mathfrak{h} \oplus \left(\bigoplus_{\alpha \in R^+} \mathfrak{g}_\alpha\right)$.  Denote by $\Delta = \{\alpha_i\}_{i=1}^{n-1}$ a basis of simple roots, satisfying that any root in $\bigoplus \Z_{\geq 0} \alpha_i$ is positive with respect to $B$. In this paper, we choose $B$ to be the subgroup of lower-triangular matrices and $T$ the diagonal matrices, in which case $\alpha_i = e_{i+1}-e_i$ where $e_i$ denotes the standard coordinate vector for the lattice $\Z^n$.  There is a bijection between the simple roots $\alpha_i \in \Delta$ and the generators $s_i \in S$ for the Coxeter group $(W,S)$.  The standard parabolic subgroups $P \supseteq B$ are in bijection with subsets $\Delta_P \subseteq \Delta$, and the associated Weyl group $W_P$ is generated by those simple reflections corresponding to the elements of $\Delta_P$.  The set of minimal length coset representatives in the quotient $W/W_P$ is denoted by $W^P$, using the standard length function on $(W,S)$.

The homogeneous variety $G/P$ admits a Bruhat decomposition into \emph{opposite Schubert cells}, given by $G/P = \bigsqcup_{w \in W^P} B^-wP/P$.  Here, $B^-$ denotes the \emph{opposite Borel subgroup}, defined by $B\cap B^- = T$. Following the conventions in this paper, $B^-$ is the subgroup of upper-triangular matrices. For any $w \in W^P$, we denote the corresponding \emph{Schubert variety} by $X_w := \overline{B^-wP/P}$.  The cohomology of $G/P$ admits an additive $\Z$-basis of \emph{Schubert classes} $\sigma_w$ indexed by $w \in W^P$, where $\sigma_w$ denotes the Poincar\'e dual of the fundamental class $[X_w]$. Throughout this paper, we focus exclusively on the \emph{maximal} parabolic subgroups $P$, equivalently $\Delta_P = \Delta \backslash \{ \alpha_m \}$ for a fixed $m \in [n-1]$.

When $P$ is maximal, the complex algebraic variety $G/P$ is the \emph{Grassmannian} $Gr(m,n)$, whose points consist of all $m$-dimensional subspaces of $\C^n$. The cohomology of $Gr(m,n)$ in this case is determined by a Schubert basis which is indexed by the set $P_{mn}$ of \emph{partitions} $\lambda = (\lambda_1, \dots, \lambda_m) \in \Z^m_{\geq 0}$ such that $n-m \geq \lambda_1 \geq \cdots \geq \lambda_m$. Equivalently, $\lambda \in P_{mn}$ determines a \emph{Young diagram} in English notation by drawing $\lambda_i$ left-justified boxes in row $i$, placing row 1 at the top. Thus, $\lambda \in P_{mn}$ if and only if its Young diagram fits inside an $m \times (n-m)$ rectangle. The number of boxes in $\lambda \in P_{mn}$ is denoted by $|\lambda|:= \sum \lambda_i$.

When the homogeneous space $G/P$ is identified with the Grassmannian $Gr(m,n)$, the Schubert variety $X_\lambda$ associated to the partition $\lambda \in P_{mn}$ is determined by the rank conditions
\[X_\lambda := \left\{ V \in Gr(m,n) \ \middle|\  \dim \left(V\cap F_{r_i} \right) \geq i,\ \text{for}\ i \in [m]\right\}, \]
where $r_i:= \lambda_{m-i+1}+i$ and $F_{r_i} := \operatorname{Span}(\textbf{e}_1, \dots, \textbf{e}_{r_i})$ denotes the subspace spanned by the first $r_i$ column vectors of the standard basis for $\C^n$.  The Schubert class $\sigma_\lambda$ is Poincar\'e dual to the fundamental class $[X_\lambda]$.  These Schubert classes form an additive $\Z$-basis for the ring $H^*(Gr(m,n))$, and the product of two Schubert classes $\sigma_\lambda \cdot \sigma_\mu = \sum c_{\lambda,\mu}^{\nu}\sigma_\nu$ expands non-negatively in this basis.  The \emph{Littlewood-Richardson coefficients} $c_{\lambda,\mu}^{\nu} \in \Z_{\geq 0}$ are well understood, with many available combinatorial interpretations; see \cite{Fulton} for an overview.

If we introduce a parameter $q$ of degree $n$, the \emph{quantum cohomology of the Grassmannian} is defined as $QH^*(Gr(m,n)) := \Z[q] \otimes H^*(Gr(m,n))$, and thus admits a $\Z[q]$-basis of Schubert classes indexed by $\lambda \in P_{mn}$, which we again denote by $\sigma_\lambda$. The \emph{quantum Littlewood-Richardson coefficients} are determined by the product of two quantum Schubert classes 
\[\sigma_\lambda * \sigma_\mu = \sum\limits_{\nu,d} c_{\lambda,\mu}^{\nu,d}q^d\sigma_\nu,\] 
where the sum now additionally ranges over $d \in \Z_{\geq 0}$.  The quantum Littlewood-Richardson coefficients are also non-negative integers, and may be computed by various rules, including several variations on the quantum-to-classical principle; see \cite{BCFF,BKT}. The classical Littlewood-Richardson coefficients are recorded as the degree $d=0$ terms in the associated quantum product.

\subsection{A quantum Pieri rule on cylindric shapes}\label{sec:QPieri}

The product in $QH^*(Gr(m,n))$ is completely determined by the \emph{quantum Pieri rule}, a closed combinatorial formula for multiplying by the quantum Schubert classes $\sigma_{(1)^k}$ or $\sigma_{(k)}$; that is, classes indexed by column or row-shaped partitions.  The first quantum Pieri rule for the Grassmannian was obtained by Bertram using Grothendieck's quot scheme  \cite{Bertram}, and Buch then gave a linear-algebraic proof \cite{Buch}. The quantum Pieri rule can also be easily recovered from its classical counterpart using the rim hook algorithm of Bertram, Ciocan-Fontanine, and Fulton \cite{BCFF}. 

An alternative quantum Pieri rule was formulated by Postnikov in terms of cylindric shapes \cite{Postnikov}.  Postnikov's rule more directly captures the quantum-to-affine phenomenon which governs the ring structure of $QH^*(Gr(m,n))$ via the parabolic Peterson isomorphism, as explicated by the third author in joint work with Cookmeyer \cite{CM}.  The purpose of this subsection is to review the version of the quantum Pieri rule from \cite{Postnikov}, which we record as Theorem \ref{thm:post} below.

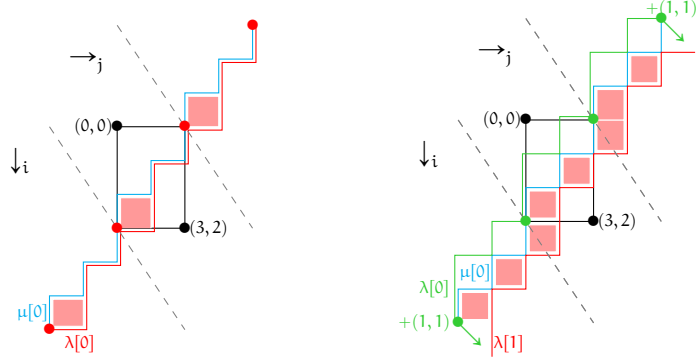
\begin{figure}[h]
\begin{tikzpicture}[scale=.45]

\draw[-]

(10,10) -- (12,10) -- (12,7) -- (10,7) -- cycle
;

\draw[color=cyan]
(14,13) -- (14,12) -- (13,12) -- (13,11) -- (12,11) -- (12,10) -- (12,9) -- (11,9) -- (11,8) -- (10,8) -- (10,7) -- (10,6) -- (9,6) -- (9,5) -- (8,5) -- (8,4)
;

\draw[color=red]
(14.1,13) -- (14.1,11.9) -- (13.1,11.9) -- (13.1,10.9) -- (13.1,9.9) -- (12.1,9.9) -- (12.1,8.9) -- (11.1,8.9) -- (11.1,7.9) -- (11.1,6.9) -- (10.1,6.9) -- (10.1,5.9) -- (9.1,5.9) -- (9.1,4.9) -- (9.1,4.9) -- (9.1,4) -- (8,4)
;

\draw[dashed,color = gray]
(8,10) -- (10,7) -- (12,4)
(10,13) -- (12,10) -- (14,7)
;

\

\path [draw,color = white, fill=red!40] (9,4.1)--  (8.1,4.1) -- (8.1,4.9) -- (9,4.9) -- cycle;
\path [draw,color = white, fill=red!40] (10.1,7) --  (10.1,7.9) -- (11,7.9) -- (11,7);
\path [draw,color = white, fill=red!40] (12.1,10) --  (13,10) -- (13,10.9) -- (12.1,10.9);

\node at (7,9) {$\downarrow$};
\node at (9,12) {$\rightarrow$};
\node[scale = .6] at (9.5,11.8) {$j$};
\node[scale = .6] at (7.3,8.8) {$i$};

\node[scale = .6] at (9.3,10) {$(0,0)$};
\node[scale = .6] at (12.7,7) {$(3,2)$};

\node[color = red] at (8,4) {$\bullet$};
\node[color =  red] at (14,13) {$\bullet$};

\node[color = red] at (12,10) {$\bullet$};
\node[color =  red] at (10,7) {$\bullet$};

\node at (10,10) {$\bullet$};
\node at (12,7) {$\bullet$};

\node[scale = .6, color = cyan] at (7.5,4.5) {$\mu[0]$};
\node[scale = .6, color = red] at (8.9,3.6) {$\lambda[0]$};

\end{tikzpicture}
\quad\quad\quad\quad
\begin{tikzpicture}[scale=.45]

\draw[-]

(10,10) -- (12,10) -- (12,7) -- (10,7) -- cycle
;

\draw[color=cyan]
(14,13) -- (14,12) -- (13,12) -- (13,11) -- (12,11) -- (12,10) -- (12,9) -- (11,9) -- (11,8) -- (10,8) -- (10,7) -- (10,6) -- (9,6) -- (9,5) -- (8,5) -- (8,4)
;

\draw[color=limegreen]
(14,13) -- (13,13) -- (13,12) -- (11.9,12) -- (11.9,11) -- (11.9,10.1) -- (11,10.1) -- (11,9) -- (9.9,9) -- (9.9,8) -- (9.9,7) -- (9,7) -- (9,6) -- (7.9,6) -- (7.9,5) -- (7.9,4);

\draw[color=red]
(15,12) -- (14,12) -- (14,11) -- (13,11) -- (13,10) -- (13,9) -- (12,9) -- (12,8) -- (11,8) -- (11,7) -- (11,6) -- (10,6) -- (10,5) -- (9,5) -- (9,4) -- (9,3);

\draw[dashed,color = gray]
(8,10) -- (10,7) -- (12,4)
(10,13) -- (12,10) -- (14,7)
;

\

\path [draw,color = white, fill=red!40] (8.1,4.1) --  (8.1,4.9) -- (8.9,4.9) -- (8.9,4.1) -- cycle;
\path [draw,color = white, fill=red!40] (9.1,5.1) --  (9.1,5.9) -- (9.9,5.9) -- (9.9,5.1) -- cycle;
\path [draw,color = white, fill=red!40] (10.1,6.1) --  (10.1,6.9) -- (10.9,6.9) -- (10.9,6.1) -- cycle;
\path [draw,color = white, fill=red!40] (10.1,7.1) --  (10.1,7.9) -- (10.9,7.9) -- (10.9,7.1) -- cycle;
\path [draw,color = white, fill=red!40] (11.1,8.1) --  (11.1,8.9) -- (11.9,8.9) -- (11.9,8.1) -- cycle;
\path [draw,color = white, fill=red!40] (12.1,9.1) --  (12.1,10) -- (12.9,10) -- (12.9,9.1) -- cycle;
\path [draw,color = white, fill=red!40] (12.1,10) --  (12.1,10.9) -- (12.9,10.9) -- (12.9,10) -- cycle;
\path [draw,color = white, fill=red!40] (13.1,11.1) --  (13.1,11.9) -- (13.9,11.9) -- (13.9,11.1) -- cycle;

\draw[dashed,color = gray]
(8,10) -- (10,7) -- (12,4)
(10,13) -- (12,10) -- (14,7)
;

\node at (7,9) {$\downarrow$};
\node at (9,12) {$\rightarrow$};
\node[scale = .6] at (9.5,11.8) {$j$};
\node[scale = .6] at (7.3,8.8) {$i$};

\node[scale = .6] at (9.3,10) {$(0,0)$};
\node[scale = .6] at (12.7,7) {$(3,2)$};

\node[color = limegreen] at (8,4) {$\bullet$};
\node[color =  limegreen] at (14,13) {$\bullet$};

\node[color = limegreen] at (12,10) {$\bullet$};
\node[color =  limegreen] at (10,7) {$\bullet$};

\node at (10,10) {$\bullet$};
\node at (12,7) {$\bullet$};

\node[scale = .6, color = cyan] at (8.5,5.5) {$\mu[0]$};
\node[scale = .6, color = limegreen] at (7.3,5) {$\lambda[0]$};
\node[scale = .6, color = red] at (9.5,3.4) {$\lambda[1]$};

\node[color = limegreen] at (8.3,3.7) {$\searrow$};
\node[color = limegreen] at (14.3,12.7) {$\searrow$};
\node[scale = .6,color = limegreen] at (14.3,13.3) {$+(1,1)$};
\node[scale = .6,color = limegreen] at (7,4) {$+(1,1)$};

\end{tikzpicture}

\caption{The cylindric loop $\mu[0]$ for $\mu=(2,1)$ on the cylinder $\mathcal{C}_{35}$  in blue. A cylindric diagram $\lambda/d/\mu$ in red, with $\lambda = (2,1,1)$ and $d=0$ on the left, and $\lambda = (1)$ and $d=1$ on the right. The skew shape on the left is both a vertical and horizontal strip, whereas the skew shape on the right is only a vertical strip.}\label{fig:cylindric}
\end{figure}

Postnikov associates each partition in $P_{mn}$ to a loop on the \emph{cylinder} $\mathcal C_{mn} := \Z^2 / (-m,n-m)\Z$, where the coordinates $(i,j) \in \Z^2$ follow the conventions for indexing matrix entries, meaning that the $i^{\text{th}}$ coordinate increases going down, and the $j^{\text{th}}$ coordinate increases going right; see Figure \ref{fig:cylindric} for an illustration of the cylinder $\mathcal C_{35}$. When referring to a \emph{box} of $\mathcal{C}_{mn}$, we use the cylinder coordinate $(i,j) \in \mathcal C_{mn}$ to refer to the box to its northwest, located between coordinates $(i,j)$ and $(i-1,j-1)$ in $\Z^2$. \emph{Row $k$} of the cylinder consists of the boxes indexed by $(k,j) \in \mathcal{C}_{mn}$, and \emph{column $k$} of the cylinder consists of the boxes indexed by $(i, k) \in \mathcal{C}_{mn}$; note that $\mathcal{C}_{mn}$ has exactly $m$ rows and $n-m$ columns. 

Given a partition $\lambda = (\lambda_1, \dots, \lambda_m) \in P_{mn}$, define a doubly infinite integer sequence $\lambda[0]:= (\dots, \ell_{-1},\ell_0,\ell_1,\dots)$ by setting $\ell_k:= \lambda_k$ for $k \in [m]$, and extending to $\Z$ by defining $\ell_{i+m} := \ell_i - (n-m)$ for all $i \in \Z$. The sequence $\lambda[0]$ determines a \emph{closed loop} on $\mathcal{C}_{mn}$ by plotting $(k,\ell_k) \in \Z^2$ for all $k \in \Z$, and then following the path $(k,\ell_k) \rightarrow (k-1,\ell_k) \rightarrow (k-1,\ell_{k-1})$. The constructed path traces out the boundary of $\lambda$ in the $m \times (n-m)$ rectangle whose northwest corner is the origin.  The closed loop $\lambda[0]$ outlines the boundary of the Young diagram for $\lambda$ periodically in $\Z^2$, in a manner which is invariant under translation by the vector $(-m,n-m)$; see the left-hand illustration in Figure \ref{fig:cylindric} for two examples of closed loops. Given any integer $d \in \Z_{\geq 0}$, the \emph{cylindric loop} $\lambda[d]$ is obtained by shifting  the image of $\lambda[0]$ in $\Z^2$ by $d$ steps in the southeast direction, equivalently by adding the vector $(d,d)$ to every point $(i, \ell_i) \in \Z^2$. For any $j \in \Z$, the $j^{\text{th}}$ entry of the integer sequence corresponding to the cylindric loop $\lambda[d]$ is denoted by $\lambda[d]_j = \ell_{j-d}+d$. The right-hand illustration in Figure \ref{fig:cylindric} depicts $\lambda[0]$ in green and its shifted image $\lambda[1]$ in red.

\begin{definition}
Given any $\lambda,\mu \in P_{mn}$ and any $d \in \Z_{\geq 0}$ such that $\lambda [d]_i \geq \mu_i$ for all $i \in [m]$, the \emph{cylindric diagram} $\lambda/d/\mu$ is represented by those boxes of $\mathcal C_{mn}$ which are located between the two cylindric loops $\lambda[d]$ and $\mu[0]$.
\end{definition}

\noindent In Figure \ref{fig:cylindric}, the red shaded boxes depict two different cylindric diagrams on $\mathcal C_{35}$.

A (weak) \emph{composition} $\eta = (\eta_1, \dots, \eta_m) \in \Z^m_{\geq 0}$ satisfies $n-m \geq \eta_k \geq 0$ for all $k \in [m]$, and has an associated diagram given by drawing $\eta_k$ boxes in row $k$ of the $m \times (n-m)$ rectangle. Given a partition $\lambda \in P_{m,n}$, the composition $\eta \subseteq \lambda$ if and only if $\eta_k \leq \lambda_k$ for all $k \in [m]$.  If $\eta \subseteq \lambda$, the \emph{skew diagram} $\lambda/\eta$ is the set-theoretic difference of the diagrams for $\lambda$ and $\eta$.  As such, all skew Young diagrams also correspond to cylindric diagrams of the form $\lambda/0/\eta$.   We thus typically write $\lambda/\eta$ rather than $\lambda/0/\eta$, and use the term \emph{skew shape} to simultaneously represent the two equivalent perspectives. The left-hand diagram in Figure \ref{fig:cylindric} illustrates this equivalence of skew shapes in both $P_{35}$ and $\mathcal C_{35}$, though for convenience we frequently illustrate only the portion of the skew shape contained in the $m \times (n-m)$ rectangle.

A skew shape is \emph{connected} if for each pair of consecutive rows in the shape, there is also at least one pair of boxes in each row sharing a common edge; neither skew shape depicted in Figure \ref{fig:cylindric} is connected. A \emph{rim hook} is a connected skew shape which does not contain any $2 \times 2$ squares, and a $k$-rim hook for any $k\in \N$ is a rim hook which contains $k$ distinct boxes on $\mathcal C_{mn}$. Importantly, note that the cylindric loop $\lambda[1]$ is equivalently constructed from the Young diagram obtained by adjoining an $n$-rim hook along the boundary of $\lambda \in P_{mn}$ such that the lower leftmost box is located in position $(m+1,1)$; compare the cylindric loops $\lambda[0]$ in green and $\lambda[1]$ in red in the right-hand diagram of Figure \ref{fig:cylindric}. Removing all possible $n$-rim hooks from a Young diagram of any size, in any order, results in the \emph{$n$-core} partition.

\begin{definition}
Let $\lambda \in P_{mn}$, and let $\eta$ be a composition such that $\eta \subseteq \lambda$.  The skew shape $\lambda /\eta$ is a \emph{vertical $r$-strip} (resp.~\emph{horizontal $r$-strip}), denoted $\lambda/\eta = v^r$ (resp.~$\lambda/\eta = h^r$), if its diagram contains exactly $r$ distinct boxes on $\mathcal C_{mn}$, no two in the same row (resp.~ column) of the cylinder $\mathcal{C}_{mn}$.
\end{definition}

\noindent In Figure \ref{fig:cylindric}, the left-hand diagram is both a vertical and horizontal 1-strip, and the right-hand diagram is a vertical 3-strip. However, the right-hand diagram contains two boxes in the first column of the cylinder $\mathcal{C}_{35}$, and is thus not a horizontal 3-strip.

We are now able to formally state the quantum Pieri rule from \cite[Prop.~4.1]{Postnikov}, for which we provide an equivariant generalization in Theorem \ref{thm:main} as our main result.

\begin{theorem}[Quantum Pieri Rule on Cylindric Shapes \cite{Postnikov}]\label{thm:post}
For any integers $1 \leq p \leq m$ and $1 \leq k \leq n-m$ and any partition $\mu \in P_{mn}$, we have
\begin{equation*}
\sigma_{(1)^p} * \sigma_\mu=\sum_{\lambda/d/\mu = v^p} q^d \sigma_\lambda
\qquad 
\text{and}
\qquad
\sigma_{(k)} * \sigma_\mu=\sum_{\lambda/d/\mu = h^k} q^d \sigma_\lambda
\end{equation*}
in $QH^*(Gr(m,n)).$ Since $\lambda/d/\mu$ is a vertical (resp.~horizontal) strip, then $d \in \{0,1\}$. 
\end{theorem}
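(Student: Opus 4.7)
The plan is to prove Theorem~\ref{thm:post} by invoking the Bertram-Ciocan-Fontanine-Fulton (BCFF) rim hook algorithm to reduce the quantum product $\sigma_{(1)^p} * \sigma_\mu$ in $QH^*(Gr(m,n))$ to a classical Pieri expansion in $H^*(Gr(m,N))$ for some $N \geq n$, and then identifying each term of the reduction with a cylindric vertical strip $\lambda/d/\mu$ for $d \in \{0,1\}$. Specifically, the classical Pieri rule yields
\[
\sigma_{(1)^p} \cdot \sigma_\mu \;=\; \sum_{\tilde\lambda \in P_{m,N},\, \tilde\lambda/\mu = v^p} \sigma_{\tilde\lambda},
\]
and BCFF replaces each $\sigma_{\tilde\lambda}$ by $q^d (-1)^{\operatorname{leg}(\rho)} \sigma_{\lambda}$, where $\lambda$ is the $n$-core of $\tilde\lambda$ and $d$ is the number of $n$-rim hooks $\rho$ removed during the reduction.

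First, for $\tilde\lambda$ already in $P_{mn}$ no reduction is needed, and I would check that setting $\lambda := \tilde\lambda$ and $d:=0$ matches the $d=0$ summand of the cylindric formula: the cylindric vertical $p$-strip condition on $\lambda/0/\mu$ reduces to the ordinary vertical $p$-strip condition on the Young diagram $\lambda/\mu$, because two boxes inside the $m \times (n-m)$ rectangle can only share a cylinder row when they share a row of the rectangle (the generating translation of $\mathcal{C}_{mn}$ shifts rows by $m$, pushing any alternative representative outside). Next, for $\tilde\lambda \in P_{m,N} \setminus P_{mn}$, the vertical strip hypothesis combined with $\mu_1 \leq n-m$ forces $\tilde\lambda_1 = n-m+1$, so exactly one $n$-rim hook $\rho$ can be removed, producing some $\lambda \in P_{mn}$. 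The key combinatorial identity is that the cylindric loop $\lambda[1]$ is precisely the boundary of $\lambda \cup \rho = \tilde\lambda$ under the $(1,1)$-shift, as indicated in Section~\ref{sec:QPieri} and illustrated in Figure~\ref{fig:cylindric}; hence $\tilde\lambda \leftrightarrow \lambda/1/\mu$ provides the desired bijection with the $d=1$ cylindric vertical $p$-strips on $\mathcal{C}_{mn}$. The possibility $d \geq 2$ is ruled out by the periodicity estimate $\lambda[d]_1 = \lambda_{m-d+1} + (n-m) + d \geq (n-m)+d$, which exceeds the vertical strip bound $\lambda[d]_1 \leq \mu_1 + 1 \leq n-m+1$ whenever $d \geq 2$.

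The main obstacle will be pinning down the BCFF sign: I need to verify that $\operatorname{leg}(\rho)$ is always even in the vertical Pieri setting, so that each quantum coefficient equals $+1$ as required by the manifest positivity of Theorem~\ref{thm:post}. This reduces to a short parity computation, since the vertical strip constraint tightly controls the shape of $\rho$ as a thin staircase attached at the box $(1, n-m+1)$. The horizontal $k$-strip formula for $\sigma_{(k)} * \sigma_\mu$ then follows by the parallel argument with rows and columns of $\mathcal{C}_{mn}$ exchanged, or equivalently by invoking the classical duality $Gr(m,n) \cong Gr(n-m,n)$ which sends $\sigma_\mu \mapsto \sigma_{\mu'}$, swaps horizontal and vertical strips, and preserves both the quantum multiplicative structure and the cylindric encoding.
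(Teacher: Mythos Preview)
The paper does not give its own proof of Theorem~\ref{thm:post}; it is quoted from \cite{Postnikov}.  That said, the paper's proof of the equivariant generalization (Theorem~\ref{thm:main}, carried out in Section~\ref{sec:proof}) proceeds exactly along the lines you sketch: one computes the classical Pieri product in $H^*_T(Gr(m,n+1))$ and then applies the (equivariant) rim hook rule of Theorem~\ref{thm:eqrimhook} to obtain the quantum terms, identifying $\tilde\lambda\in P_{m,n+1}\setminus P_{mn}$ with the cylindric shape $\nu/1/\mu$ via the observation $\tilde\lambda[0]=\nu[1]$.  Specializing that argument to $t_i=0$ is precisely your non-equivariant rim hook proof, so your approach and the paper's are essentially the same.

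Two points in your write-up deserve tightening.  First, the assertion ``exactly one $n$-rim hook $\rho$ can be removed, producing some $\lambda\in P_{mn}$'' is not correct as stated: when $\tilde\lambda_1=n-m+1$ but $\tilde\lambda_m=0$, the $\beta$-numbers are $\beta_1=n$ and $\beta_m=0$, so no $n$-rim hook is removable at all; hence $\tilde\lambda$ is its own $n$-core, which lies outside $P_{mn}$, and the BCFF contribution is zero.  This case must be separated out (and it matches the fact that no $\lambda\in P_{mn}$ satisfies $\lambda[1]=\tilde\lambda[0]$ when $\tilde\lambda_m=0$).  Second, the sign: in the surviving case $\tilde\lambda_m\ge 1$, the removed rim hook has height exactly $m$, so in the paper's convention from Theorem~\ref{thm:eqrimhook} the sign is $(-1)^{\varepsilon_1-m}=(-1)^{m-m}=+1$.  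Framing this as ``$\operatorname{leg}(\rho)$ is always even'' is convention-dependent and misleading; the clean statement is that the height equals $m$.  With these two repairs your argument goes through.
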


\subsection{Equivariant quantum cohomology of the Grassmannian}\label{sec:eqQCoh}

There is always a natural left action of the torus $T$ on the homogenous space $G/P$. In the case of maximal $P$, the Schubert varieties $X_\lambda$ are $T$-invariant with codimension $|\lambda|$, and thus also determine a basis of Schubert classes $\sigma_\lambda$  in the \emph{$T$-equivariant cohomology of the Grassmannian} $H_T^*(Gr(m,n))$. In fact, these Schubert classes  also form a basis for $H_T^*(Gr(m,n))$ as a module over $H^*_T(\operatorname{pt})$.  Furthermore, since $G = SL_n$, then the equivariant cohomology of a point is isomorphic to a polynomial ring $H_T^*(\operatorname{pt}) \cong  \Z[t_1, \dots, t_n]$. As such, $H^*_{T}(Gr(m,n))$ admits a $\Z[t_1, \dots, t_n]$-basis of Schubert classes $\sigma_\lambda$ indexed by $\lambda \in P_{mn}$.
 
 The \emph{equivariant Littlewood-Richardson coefficients} are defined by the product of two equivariant Schubert classes
\begin{equation*}
\sigma_\lambda \circ \sigma_\mu = \sum c_{\lambda,\mu}^\nu \sigma_\nu
\end{equation*}
in $H^*_{T}(Gr(m,n))$, where here $c_{\lambda,\mu}^\nu \in \Z[t_1, \dots, t_n]$.   
The equivariant Littlewood-Richardson coefficients satisfy the \emph{Graham-positivity} property \cite{Graham}, which means that $c_{\lambda, \mu}^\nu \in \Z_{\geq 0}[\alpha_1, \dots, \alpha_{n-1}]$. Having chosen $B$ to be lower-triangular in Section \ref{sec:QCoh}, then  $c_{\lambda, \mu}^{\nu}\in \Z_{\geq 0}[t_2-t_1, \dots, t_{n}-t_{n-1}]$. In this context, $\Z_{\geq0}$-linear combinations of the $(t_{i+1}-t_i)$ are referred to as (torus) \emph{weights}. Several different combinatorial formulas for the equivariant Littlewood-Richardson coefficients exist, such as the puzzle rule of Knutson and Tao \cite{KT}, reinterpreted by Zinn-Justin through quantum integrability of tilings \cite{ZinnJustin}, a barred skew tableaux formula obtained independently by Molev \cite{Molev} and Kreiman \cite{Kreiman}, or Thomas and Yong's equivariant adaptation of jeu de taquin \cite{ThomasYong}.  Since we shall work in the equivariant setting for the remainder of the paper, there should be no confusion from adopting the same notation here as in our brief review of the non-equivariant context.

Using the same quantum parameter $q$ as in the non-equivariant setting, the \emph{equivariant quantum cohomology of the Grassmannian} is defined as $QH^*_{T}(Gr(m,n)) := \Z[q] \otimes H^*_{T}(Gr(m,n))$, and is simultaneously a deformation of both $H^*_{T}(Gr(m,n))$ and $QH^*(Gr(m,n))$. The ring $QH^*_{T}(Gr(m,n))$ has an additive $\Z[t_1,\dots, t_n][q]$-basis of Schubert classes $\sigma_\lambda$ for $\lambda \in P_{mn}$. The \emph{equivariant quantum Littlewood-Richardson coefficients} are determined by the product of two Schubert classes
\begin{equation*}
\sigma_\lambda \star \sigma_\mu = \sum\limits_{\nu,d} c_{\lambda,\mu}^{\nu,d}q^d\sigma_\nu
\end{equation*}
in $QH^*_{T}(Gr(m,n))$. The equivariant Littlewood-Richardson coefficients are encoded as the degree $d=0$ terms in the same product, and the corresponding quantum Littlewood-Richardson coefficients can be obtained by setting all $t_i = 0$.  The equivariant quantum Littlewood-Richardson coefficients also display Graham-positivity, as proved by Mihalcea \cite{Mihalcea}, in the sense that $c_{\lambda, \mu}^{\nu,d}\in \Z_{\geq 0}[t_2-t_1, \dots, t_{n}-t_{n-1}]$ using the conventions of this paper.  A positive combinatorial formula for the equivariant Littlewood-Richardson coefficients is obtained by combining Buch and Mihalcea's equivariant quantum-to-classical principle \cite{BuchMihalcea} with Buch's equivariant puzzle rule for two-step flag varieties \cite{Buch2step}.

A simple, signed formula for the equivariant quantum Littlewood-Richardson coefficients is given by combining the equivariant rim hook rule of the first, third, and fourth authors with any available equivariant Littlewood-Richardson rule.  We now briefly review  \cite[Theorem 2.6]{BMT}, in order to apply it in the proof of our main theorem in Section \ref{sec:FacSchurProof}.

\begin{theorem}[Equivariant Rim Hook Rule \cite{BMT}]\label{thm:eqrimhook}
Let $\lambda, \mu \in P_{mn}$, and consider the product of the corresponding classes $\sigma_\lambda \circ \sigma_\mu = \sum c_{\lambda,\mu}^{\gamma}\sigma_\gamma \in H^*_{T}(Gr(m,2n-1))$. Then in $QH^*_{T}(Gr(m,n))$, we have 
\begin{equation*}
\sigma_\lambda \star \sigma_\mu = \sum \varphi \left( c_{\lambda,\mu}^{\gamma}\right)\varphi \left(\sigma_\gamma\right), \quad 
\text{where} 
\quad 
\varphi:
\begin{cases}
t_i \ \mapsto t_{i(\operatorname{mod}n)} \\
\sigma_\gamma \mapsto 
\begin{cases}
\prod_{i=1}^d \left( (-1)^{(\varepsilon_i-m)}q\right)\sigma_{\nu} \quad \text{if}\ \nu \in P_{mn}\\
0 \hskip 1.58in \text{if}\ \nu \notin P_{mn}.
\end{cases}
\end{cases}
\end{equation*}
Here, $\nu$ denotes the $n$-core of $\gamma$, the integer $d$ is the number of $n$-rim hooks removed from $\gamma$ to obtain $\nu$, and $\varepsilon_i$ is the height of the $i^{\text{th}}$ rim hook removed.
\end{theorem}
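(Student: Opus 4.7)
The plan is to establish the equivariant rim hook rule by lifting the classical non-equivariant rim hook rule of \cite{BCFF} to the equivariant setting via a presentation of both rings by factorial Schur polynomials, and then showing that the proposed $\varphi$ descends to a ring homomorphism from $H^*_T(Gr(m,2n-1))$ to $QH^*_T(Gr(m,n))$. The upshot is that once $\varphi$ is realized as a genuine ring map, the formula for $\sigma_\lambda \star \sigma_\mu$ is immediate by applying $\varphi$ term-by-term to the classical equivariant product $\sigma_\lambda \circ \sigma_\mu$ in $H^*_T(Gr(m,2n-1))$.

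First, I would invoke the factorial Schur polynomial presentations of both rings: by results of Knutson-Tao, Lakshmibai-Raghavan-Sankaran, and Ikeda-Naruse, the Schubert class $\sigma_\lambda \in H^*_T(Gr(m,N))$ is represented by the factorial Schur polynomial $s_\lambda(x \mid t)$, with $x_1, \dots, x_m$ the Chern roots of the tautological quotient bundle and $t_1, \dots, t_N$ the equivariant parameters, modulo the ideal generated by factorial complete homogeneous polynomials $h_k(x \mid t)$ for $k > N-m$. A parallel presentation of $QH^*_T(Gr(m,n))$ due to Mihalcea deforms the analogous relations by the quantum parameter $q$, so both rings are Schubert-basis quotients of a common polynomial algebra in $x$ and $t$.

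Next, I would define $\varphi$ at the level of the ambient polynomial algebra by $t_i \mapsto t_{i \pmod n}$ (leaving the $x_j$ fixed) and verify that it descends to a well-defined map between the Schubert quotients. Its effect on a Schubert class $\sigma_\gamma$ for $\gamma \in P_{m, 2n-1}$ is then computed by applying a factorial analog of the Jacobi-Trudi identity to straighten $s_\gamma(x \mid t)$, after the cyclic substitution, into a factorial Schur polynomial indexed by a partition inside the $m \times (n-m)$ rectangle. Each straightening step corresponds to removing a single $n$-rim hook: the determinantal expansion produces the sign $(-1)^{\varepsilon_i - m}$ from a cyclic row slide of length $\varepsilon_i - m$, and the degree-shift contributes exactly one factor of $q$ per rim hook, matching the formula in the theorem statement precisely.

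The main obstacle will be confirming that $\varphi$ is actually a ring homomorphism, i.e., that the ideal of relations cutting out $H^*_T(Gr(m,2n-1))$ is mapped into the quantum ideal cutting out $QH^*_T(Gr(m,n))$. This reduces to a detailed analysis of how the factorial complete homogeneous symmetric polynomials $h_k(x \mid t)$ for $k > n-m$ transform under the cyclic $t$-shift, and requires showing that the straightening of the associated factorial Schur polynomials for large $k$ produces exactly the $q$-deformed quantum relations of Mihalcea's presentation, with no residual torus-weight correction. The combinatorial bookkeeping of signs and powers of $q$ through the shifted Jacobi-Trudi expansion is the most delicate point. Once this compatibility is established, multiplicativity of $\varphi$ gives the claimed identity $\sigma_\lambda \star \sigma_\mu = \sum \varphi(c_{\lambda,\mu}^\gamma)\, \varphi(\sigma_\gamma)$ by direct transport of the classical equivariant product.
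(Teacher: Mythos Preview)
This theorem is not proved in the present paper. It is stated and cited as \cite[Theorem~2.6]{BMT}, reviewed here only so that it can be applied as a black box in the proof of Theorem~\ref{thm:main} in Section~\ref{sec:proof}. There is therefore no proof in this paper against which to compare your proposal.

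That said, your sketch is a faithful outline of the strategy actually used in \cite{BMT}: one presents both $H^*_T(Gr(m,2n-1))$ and $QH^*_T(Gr(m,n))$ via factorial Schur polynomials (using Mihalcea's quantum presentation for the latter), defines the map $\varphi$ on the ambient polynomial ring, and checks that it carries the classical ideal into the quantum ideal. The Jacobi--Trudi straightening you describe is exactly how the sign $(-1)^{\varepsilon_i-m}$ and the factor of $q$ per rim hook arise. The point you flag as ``the main obstacle''---verifying that the $h_k(x\mid t)$ for $n-m<k\leq 2n-1-m$ map to the quantum relations under the cyclic substitution $t_i\mapsto t_{i\bmod n}$---is indeed the heart of the argument in \cite{BMT}, and it does require a careful factorial identity (the authors introduce cyclic factorial Schur polynomials for this purpose). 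Your proposal is correct in spirit but is a sketch rather than a proof; the delicate step is precisely the one you identify.
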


\noindent Our conventions on Graham-positivity in this paper differ by a sign from those in much of the literature, in order to remain consistent with the torus weight conventions of \cite{BMT}.

The structure of the Grassmannian lends itself to many symmetries, which we exploit to relate the Pieri rule on row and column-shaped partitions in the proof of our main theorem. Given $\lambda \in P_{mn}$, denote by $\lambda' \in P_{n-m,n}$ the \emph{transpose} partition which exchanges the rows and columns.  The isomorphism which identifies an $m$-dimensional subspace of $V= \C^n$ with an $(n-m)$-dimensional subspace of the dual space $V^*$ exchanges the Schubert class $\sigma_\lambda$ for the Schubert class $\sigma_{\lambda'}$  indexed by the transposed partition. The induced isomorphism on equivariant quantum cohomology $QH^*_{T}(Gr(m,n)) \cong QH^*_{T}(Gr(n-m,n))$ also involves an involution on the torus weights. Below we review this \emph{level-rank duality} on equivariant quantum Littlewood-Richardson coefficients, as stated by Gorbounov and Korff in \cite[Corollary 4.14]{GK}; see also \cite[Corollary 6.8]{GKunpub}. 

\begin{theorem}[Level-Rank Duality \cite{GKunpub,GK}]\label{thm:levelrank}
Given $\lambda, \mu, \nu \in \mathcal{P}_{mn}$ and any $d \in \Z_{\geq 0}$, then  
\begin{equation*}
c_{\lambda,\mu}^{\nu,d}(t) = c_{\lambda',\mu'}^{\nu',d}(-w_0 t),
\end{equation*}
where the involution on torus weights is given by the substitution $-w_0: t_i \mapsto -t_{n+1-i}$.
\end{theorem}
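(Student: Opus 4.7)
The plan is to realize the identity as the combinatorial shadow of the classical geometric duality between $Gr(m,n)$ and $Gr(n-m,n)$, and then promote the resulting equivariant isomorphism to the quantum level via $T$-equivariance of Gromov--Witten invariants. First I would construct the map $\Phi: Gr(m,n) \to Gr(n-m,n)$ sending a subspace $V$ to its annihilator $V^\perp \subset (\C^n)^*$, using the basis identification $e_i^* \mapsto e_{n+1-i}$ of $(\C^n)^*$ with $\C^n$ so that the standard flag matches on both sides. Because the $T$-action on $(\C^n)^*$ has weights $-t_i$, this basis identification converts them into weights $-t_{n+1-i}$ on $\C^n$, so $\Phi$ becomes $T$-equivariant with respect to the involution $-w_0: t_i \mapsto -t_{n+1-i}$ on the torus characters.

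Next I would verify the Schubert correspondence $\Phi(X_\lambda) = X_{\lambda'}$. Using the duality identity $\dim(V^\perp \cap F_k^\perp) = \dim(V \cap F_k) + (n-m-k)$ together with the basis reversal above, the defining rank conditions $\dim(V \cap F_{r_i}) \geq i$ for $X_\lambda$ transform into $\dim(V^\perp \cap F_{n - r_i}) \geq n - m - \lambda_{m-i+1}$. Re-indexing $j = m - i + 1$ and discarding conditions at non-jump positions of $\lambda$ (which are redundant in any rank-condition description of $X_\lambda$), what remains is precisely the defining system for $X_{\lambda'}$ in $Gr(n-m,n)$. Consequently, the pullback $\Phi^*$ produces an isomorphism of equivariant cohomology rings sending $\sigma_{\lambda'}$ to $\sigma_\lambda$ and acting on polynomial coefficients via the substitution $t \mapsto -w_0 t$.

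To extend to equivariant quantum cohomology, I would observe that $\Phi^* \sigma_{(1)} = \sigma_{(1)}$ generates $\operatorname{Pic}(Gr(m,n))$, so $\Phi$ preserves curve degrees. Hence $\Phi$ induces $T$-equivariant isomorphisms $\overline{M}_{0,3}(Gr(m,n), d) \cong \overline{M}_{0,3}(Gr(n-m,n), d)$ of Kontsevich moduli spaces commuting with the evaluation maps, so equivariant three-point Gromov--Witten invariants transform as
\[
\langle \sigma_\lambda, \sigma_\mu, \sigma_{\nu^\vee} \rangle_d^{Gr(m,n)}(t) \;=\; \langle \sigma_{\lambda'}, \sigma_{\mu'}, \sigma_{(\nu^\vee)'} \rangle_d^{Gr(n-m,n)}(-w_0\, t).
\]
Using $(\nu^\vee)' = (\nu')^\vee$ and the reconstruction of $c^{\nu,d}_{\lambda,\mu}$ from three-point equivariant GW invariants paired against the equivariant Poincar\'e duality basis (which is itself $\Phi$-compatible), applying $\Phi^*$ to the expansion $\sigma_{\lambda'} \star \sigma_{\mu'} = \sum c^{\nu',d}_{\lambda',\mu'}(t)\, q^d\, \sigma_{\nu'}$ in $QH^*_T(Gr(n-m,n))$ yields $\sigma_\lambda \star \sigma_\mu = \sum c^{\nu',d}_{\lambda',\mu'}(-w_0 t)\, q^d\, \sigma_\nu$ in $QH^*_T(Gr(m,n))$. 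Matching coefficients with the direct expansion of the quantum product yields $c^{\nu,d}_{\lambda,\mu}(t) = c^{\nu',d}_{\lambda',\mu'}(-w_0 t)$.

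I expect the main obstacle to be the quantum step: one must carefully justify that the correspondence of three-point equivariant GW invariants descends to the structure constants themselves, controlling how the equivariant Poincar\'e pairing transforms under $\Phi$. A purely combinatorial alternative that avoids Gromov--Witten theory would apply the equivariant rim hook rule (Theorem \ref{thm:eqrimhook}) to reduce the quantum identity to a classical equivariant level-rank duality in $H^*_T(Gr(m, 2n-1))$, after verifying that transposition commutes with $n$-rim hook removal, preserves the heights $\varepsilon_i$ (hence the signs), and is compatible with the modular substitution $t_i \mapsto t_{i \bmod n}$ defining the rim-hook projection $\varphi$.
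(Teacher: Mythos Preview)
The paper does not supply its own proof of this statement: Theorem~\ref{thm:levelrank} is quoted from Gorbounov and Korff \cite{GKunpub,GK} and used as a black box in the proof of Theorem~\ref{thm:main}. Gorbounov and Korff obtain it as a corollary of their integrable-systems presentation of $QH^*_T(Gr(m,n))$ via the Yang--Baxter algebra, which is a rather different route from yours.

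Your main geometric argument is the standard and correct one. The annihilator map $V \mapsto V^\perp$ together with the basis reversal $e_i^* \mapsto e_{n+1-i}$ is $T$-equivariant for exactly the character twist $-w_0$, and the Schubert-to-Schubert correspondence $X_\lambda \mapsto X_{\lambda'}$ follows from the rank computation you wrote down. The passage to equivariant quantum cohomology is also fine: $\Phi$ is an isomorphism of varieties, so it induces $T$-equivariant isomorphisms of Kontsevich spaces compatible with evaluation, and the equivariant Poincar\'e pairing transforms the same way as everything else, so the ``obstacle'' you flag is not actually an obstruction once you note that $\Phi^*$ carries the dual Schubert basis to the dual Schubert basis (with the same $-w_0$ twist on coefficients).

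Your alternative rim-hook route, however, has a real problem. Transposition does \emph{not} preserve the height $\varepsilon$ of an $n$-rim hook: a rim hook of height $\varepsilon$ has width $n-\varepsilon+1$, so its transpose has height $n-\varepsilon+1$, and the rim-hook sign in $Gr(n-m,n)$ becomes $(-1)^{(n-\varepsilon+1)-(n-m)} = -(-1)^{\varepsilon-m}$, off by a sign from what you need. More structurally, the rim-hook rule for $Gr(m,n)$ passes through $H^*_T(Gr(m,2n-1))$, whose level-rank dual is $Gr(2n-1-m,2n-1)$, not the $Gr(n-m,2n-1)$ that the rim-hook rule for $Gr(n-m,n)$ would use; so the two reductions do not line up and this alternative would require substantially more work than you suggest.
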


\subsection{An equivariant quantum Pieri rule on cylindric shapes}\label{sec:eqQPieri}

In the equivariant setting, Mihalcea showed in \cite{MihalceaTrans} that the product in $QH^*_{T}(Gr(m,n))$  is completely determined by the \emph{equivariant quantum Pieri-Chevalley rule}, a closed combinatorial formula for multiplying by the equivariant quantum Schubert class $\sigma_{(1)}$ corresponding to a single box. The method for determining the algebra structure from this Pieri-Chevalley rule is both recursive and signed, and so more general Graham-positive product formulas have since emerged, including the full equivariant quantum Littlewood-Richardson rule given by combining \cite{BuchMihalcea} and \cite{Buch2step}.

Without appealing to some variation on the quantum-to-classical principle as in \cite{BuchMihalcea} or \cite{BMT}, the most general combinatorial formula for products in $QH^*_{T}(Gr(m,n))$ is the \emph{equivariant quantum Pieri rule} for multiplying by the equivariant quantum Schubert classes $\sigma_{(1)^k}$ or $\sigma_{(k)}$. Huang and Li proved the first equivariant quantum Pieri rule for any type $A$ partial flag variety in Theorem 3.10 of \cite{HuangLi}; see Theorem \ref{T:HuangLi} below, which reviews a special case. Also compare the approach of Gorbounov and Korff using the six-vertex model from statistical mechanics \cite{GKunpub,GK}.  As in the non-equivariant context, the equivariant quantum Pieri rule can also be easily recovered from its classical counterpart using the equivariant rim hook rule of \cite{BMT}.

The goal of this paper is to present a closed combinatorial Graham-positive equivariant quantum Pieri rule in terms of cylindric shapes.  In addition to more directly capturing the quantum-to-affine phenomenon which governs the ring structure of $QH^*_{T}(Gr(m,n))$ via the parabolic Peterson isomorphism of \cite{Pet,LamShimPet}, our rule does not require doing calculations in any related two-step flags or smaller/larger Grassmannians. Theorem \ref{thm:main} below is a direct equivariant generalization of Theorem \ref{thm:post}, in which the equivariant terms enter by recording statistics on boxes which may be ``added back'' to the skew shape, in a manner which we now formalize.

\begin{definition} \label{def:AB}
Let $\lambda, \mu \in P_{mn}$ be such that $\mu \subseteq \lambda$, and let $d \in \Z_{\geq 0}$. Suppose that the cylindric shape $\lambda/ d/ \mu$ is a vertical $r$-strip $v^r$ (resp.~horizontal $r$-strip $h^r$) for some $0 \leq r \leq m$. Fix another integer $r \leq p \leq m$. 
\begin{enumerate}
\item $v^r$ (resp.~$h^r$) is \emph{extendable} to a vertical (resp.~horizontal) $p$-strip if by adding $p-r$ boxes from $\mu$ to $\lambda/d/\mu$, each of which shares a vertical (resp.~horizontal) edge with the boundary of $\lambda/d/\mu$, we can form a vertical $p$-strip $v^p$ (resp.~horizontal $p$-strip $h^p$).  Note that the complement $\lambda \ba v^p$ (resp.~$\lambda \ba h^p$) may no longer be a partition shape, but rather a composition.
\item The $p-r$ boxes from $\mu$ which are added to $\lambda/d/\mu$ in the process of extending $v^r$ to $v^p$ (resp.~$h^r$ to $h^p$) are called \emph{addable boxes}. We typically denote an individual addable box by $\alpha \in v^p \ba v^r$ (resp.~$\alpha \in h^p \ba h^r$), where the original vertical (resp.~horizontal) $r$-strip is always assumed to be of the form $\lambda/d/\mu$.
\item We write $v^r \rightarrow v^p$ (resp.~$h^r \rightarrow h^p$) to denote that the vertical $r$-strip $v^r$ (resp.~horizontal $r$-strip $h^r$) is being \emph{extended} to a vertical $p$-strip $v^p$ (resp.~horizontal $p$-strip $h^p$) via addable boxes, without specific reference to which boxes are being added.  Equivalently, we say that $v^p$ (resp.~$h^p$) is an \emph{extension} of  $v^r$ (resp.~$h^r$).
\end{enumerate}
\end{definition}

We illustrate the concept of extending vertical and horizontal strips via addable boxes in the following example, which we shall then continue to utilize throughout the paper.

\begin{example}
 Consider $\mu = (6,6,6,3,2,0,0) \subset \lambda = (7,6,6,4,2,1,0) \in P_{7,15}$.
As Figure \ref{fig:vstrip} illustrates in the $7\times 8$ rectangle, the skew shape $\lambda/\mu$ consists of the three boxes containing red stars.  Since no two of the boxes containing a $\red{\sstar[1.5]}$ are in the same row or column, this skew shape is both a vertical 3-strip and a horizontal 3-strip on the cylinder $\mathcal C_{7,15}$.

\begin{figure}[h]
\[\begin{tikzpicture}[scale=.45]

\draw[-]

(8,8) -- (16,8) 
(8,7) -- (15,7) 
(8,6) -- (14,6)
(8,5) -- (14,5) 
(8,4) -- (12,4) 
(8,3) -- (10,3) 
(8,2) -- (9,2) 

(8,8) -- (8,1)
(9,8) -- (9,2)
(10,8) -- (10,3)
(11,8) -- (11,4)
(12,8) -- (12,4)
(13,8) -- (13,5)
(14,8) -- (14,5)
(15,8) -- (15,7)
;

\node[color=red] at (14.5,7.5) {$\sstar[2]$};
\node[color=red] at (11.5,4.5) {$\sstar[2]$};
\node[color=red] at (8.5,2.5) {$\sstar[2]$};

\node at (13.5,6.5) {$\green{\sbullet[2]}$};
\node at (13.5,5.5) {$\green{\sbullet[2]}$};

\draw[-]

(18,8) -- (26,8) 
(18,7) -- (25,7) 
(18,6) -- (24,6)
(18,5) -- (24,5) 
(18,4) -- (22,4) 
(18,3) -- (20,3) 
(18,2) -- (19,2) 

(18,8) -- (18,1)
(19,8) -- (19,2)
(20,8) -- (20,3)
(21,8) -- (21,4)
(22,8) -- (22,4)
(23,8) -- (23,5)
(24,8) -- (24,5)
(25,8) -- (25,7)
;

\node[color=red] at (24.5,7.5) {$\sstar[2]$};
\node[color=red] at (21.5,4.5) {$\sstar[2]$};
\node[color=red] at (18.5,2.5) {$\sstar[2]$};

\node at (23.5,6.5) {$\green{\sbullet[2]}$};
\node at (19.5,3.5) {$\green{\sbullet[2]}$};

\draw[-]

(28,8) -- (36,8) 
(28,7) -- (35,7) 
(28,6) -- (34,6)
(28,5) -- (34,5) 
(28,4) -- (32,4) 
(28,3) -- (30,3) 
(28,2) -- (29,2) 

(28,8) -- (28,1)
(29,8) -- (29,2)
(30,8) -- (30,3)
(31,8) -- (31,4)
(32,8) -- (32,4)
(33,8) -- (33,5)
(34,8) -- (34,5)
(35,8) -- (35,7)
;

\node[color=red] at (34.5,7.5) {$\sstar[2]$};
\node[color=red] at (31.5,4.5) {$\sstar[2]$};
\node[color=red] at (28.5,2.5) {$\sstar[2]$};

\node at (33.5,5.5) {$\green{\sbullet[2]}$};
\node at (29.5,3.5) {$\green{\sbullet[2]}$};

\end{tikzpicture}\]
\caption{Extending a vertical 3-strip to a vertical 5-strip via addable boxes.}\label{fig:vstrip}
\end{figure}
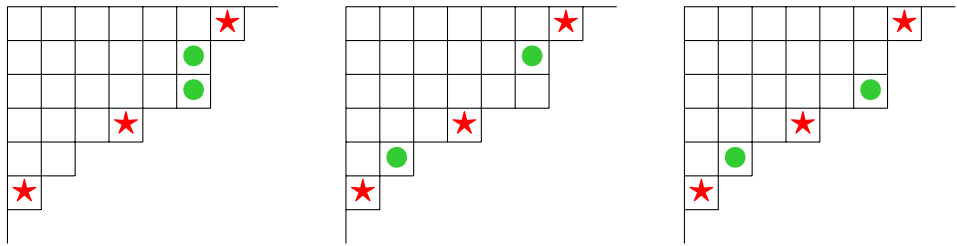

If we now fix $p=5$, then there are 3 different ways to extend $v^3$ to a vertical 5-strip, shown in Figure \ref{fig:vstrip}. In each diagram, the $5-3=2$ boxes of $\mu$ containing green dots are addable, since each box with a $\green{\sbullet[2]}$ shares a vertical edge with the boundary of the original skew shape $\lambda/\mu$, and the union of the boxes with $\red{\bigstar}$ and $\green{\sbullet[2]}$ forms a  vertical 5-strip. For the 5-strip $v^5$ depicted in the middle diagram, note that the complement $\lambda \ba v^5$ is not a partition shape, whereas the other two are.  Each of the three configurations in Figure \ref{fig:vstrip} would contribute to a sum indexed by the extension $v^3 \rightarrow v^5$.

As noted above, the cylindric shape $\lambda/\mu$ is also a horizontal 3-strip $h^3$. However, only the right-hand arrangement of addable boxes in Figure \ref{fig:vstrip} represents a valid extension of $h^3$ to a horizontal 5-strip, since each addable box $\green{\sbullet[2]}$ is required to share a horizontal edge with the boundary of $\lambda/\mu$.  The other 5 configurations which would contribute to a sum indexed by $h^3 \rightarrow h^5$ are not pictured.
\end{example}

We now define several natural statistics on an addable box, determined by its location within the extension. We first enumerate the edges of the partition $\mu \in P_{mn}$, equivalently the closed loop  $\mu[0]$ on $\mathcal C_{mn}$.

\begin{definition}\label{def:upside}
Given $\mu \in P_{mn}$, starting in the lower left-hand corner of the $m \times (n-m)$ rectangle, number all the edges of the path that traces out the boundary of $\mu$. 
\begin{enumerate}
\item The \emph{up-steps of $\mu$}, denoted by $U(\mu)$, are the numbers indexing the vertical edges of $\mu$.
\item The \emph{side-steps of $\mu$}, denoted by $S(\mu)$, are the numbers indexing the horizontal edges of $\mu$. 
\end{enumerate}
Note that $U(\mu) \sqcup S(\mu) = [n]$. 
\end{definition}

The required statistics depend on whether an addable box is viewed as part of a vertical or horizontal strip, and so the following definition is presented as two corresponding sets of statistics.

\begin{definition}\label{def:addboxstats}
 Given a vertical strip $\lambda/d/\mu = v^r$, an extension $v^r \rightarrow v^p$, and an addable box $\alpha \in v^p\backslash v^r$, define:
\begin{enumerate}
\item[(1v)] The \emph{up-step} of $\alpha$, denoted $u(\alpha)$, is the index of the vertical edge of the box $\alpha$ from $U(\mu)$. 
\item[(2v)] The \emph{row number} of $\alpha$, denoted $r(\alpha)$, enumerates the row containing $\alpha$, where we count rows from the bottom of the $m \times (n-m)$ rectangle, equivalently from row $m$ of $\mathcal{C}_{mn}$.
\item[(3v)] The \emph{number of boxes below} $\alpha$, denoted $b(\alpha)$, is defined to be the total number of boxes in the extension $v^p$ lying in rows strictly below $\alpha$ in the $m \times (n-m)$ rectangle, equivalently boxes in $v^p$ with row index in $\mathcal{C}_{mn}$ strictly greater than $r(\alpha)$.
\end{enumerate}
\noindent   Given a horiztonal strip $\lambda/d/\mu = h^r$, an extension $h^r \rightarrow h^p$, and an addable box $\alpha \in h^p\backslash h^r$, define:
\begin{enumerate}
\item[(1h)] The \emph{side-step} of $\alpha$, denoted $s(\alpha)$, is the index of the horizontal edge of the box $\alpha$ from $S(\mu)$. 
\item[(2h)] The \emph{column number} of $\alpha$, denoted $c(\alpha)$, enumerates the column containing $\alpha$, where we count columns from the right of the $m \times (n-m)$ rectangle, equivalently from column $n-m$ of $\mathcal{C}_{mn}$.
\item[(3h)] The \emph{number of boxes to the right} of $\alpha$, denoted $rt(\alpha)$, is defined to be the total number of boxes in the extension $h^p$ lying in columns strictly to the right of $\alpha$ in the $m \times (n-m)$ rectangle, equivalently boxes in $h^p$ with column index in $\mathcal{C}_{mn}$ strictly greater than $c(\alpha)$.
\end{enumerate}
\end{definition}

We can now formally define the weight of an addable box.

\begin{definition}\label{def:wtaddbox}
Let $\lambda, \mu \in P_{mn}$ be such that $\mu \subseteq \lambda$, and let $d \in \Z_{\geq 0}$.
\begin{enumerate}
\item If the cylindric shape $\lambda/ d/ \mu$ is a vertical $r$-strip $v^r$, we define the \emph{weight of an addable box} $\alpha \in v^p \ba v^r$ to be
\begin{equation*}
\wtv(\alpha) := t_{u(\alpha)}-t_{r(\alpha)-b(\alpha)}.
\end{equation*}
\item If the cylindric shape $\lambda/ d/ \mu$ is a horizontal $r$-strip $h^r$, we define the \emph{weight of an addable box} $\alpha \in h^p \ba h^r$ to be
\begin{equation*}
\wth(\alpha) := t_{n+1-(c(\alpha)-rt(\alpha))}-t_{s(\alpha)}.
\end{equation*}
\end{enumerate}
When the original skew shape $\lambda/d/\mu=v^r$ (resp.~$\lambda/d/\mu=h^r$) is not clear from context, we write $\wtv_{\mu}(\alpha)$ (resp.~$\wth_{\mu}(\alpha)$) to indicate that the box is being added from $\mu$. 
\end{definition}

\begin{remark}\label{rmk:rectreduction}
Since addable boxes belong to the interior diagram of the skew shape $\lambda/d/\mu$, each of the statistics in Definition \ref{def:addboxstats} is equivalent whether we consider the partition $\mu \in P_{mn}$ in the rectangle or the closed loop $\mu[0] \subset \mathcal C_{mn}$ on the cylinder.  The weight of an addable box is thus independent of whether we consider the cylindric shape $\lambda/d/\mu$ or the equivalent skew diagram which fits inside the $m \times (n-m)$ rectangle.  We thus refer to the weight of an addable box in a skew shape to simultaneously invoke both meanings.
\end{remark}

To illustrate this definition, we calculate the weight of several addable boxes from Figure \ref{fig:vstrip}.

\begin{example}\label{ex:ABweight}
Recall from Figure \ref{fig:vstrip} that $\lambda=(7,6,6,4,2,1,0) \in P_{7,15}$. Tracing the boundary of $\mu$ in the $7\times 8$ rectangle, we obtain \[U(\mu) = \{1,2,5,7,11,12,13\} \quad \text{and} \quad S(\mu) = \{3,4,6,8,9,10,14,15\}.\]
\begin{enumerate}
\item We first focus on the vertical strip depicted in the left-hand diagram, for which we calculate the weight of each addable box containing a $\green{\sbullet[2]}$. Denote by $\alpha_T$ the top addable box, and by $\alpha_B$ the bottom addable box. Compute using Definition \ref{def:addboxstats} that
\begin{align*}
u(\alpha_T) & = 12 \quad r(\alpha_T) = 6 \quad b(\alpha_T) = 3\\
u(\alpha_B) & = 11 \quad r(\alpha_B) = 5 \quad b(\alpha_B) = 2.
\end{align*} 
Using Definition \ref{def:wtaddbox}, we then compute the weight of each of the two addable boxes occurring in the left-hand diagram from Figure \ref{fig:vstrip} to be
\[ \wtv(\alpha_T) = t_{12}-t_{6-3} = t_{12}-t_3 \quad \text{and} \quad\wtv(\alpha_B) = t_{11}-t_{5-2} = t_{11}-t_3. \]

\item For the purpose of illustrating these calculations in the case of a horizontal strip, we now choose to view the right-hand diagram in Figure \ref{fig:vstrip} as a horizontal strip. Denote by $\alpha_R$ the right addable box, and by $\alpha_L$ the left addable box. Using Definition \ref{def:addboxstats},
\begin{align*}
s(\alpha_R) & = 10 \quad c(\alpha_R) = 3 \quad rt(\alpha_R) = 1\\
s(\alpha_L) & = \ 4\,\, \quad c(\alpha_L) = 7 \quad rt(\alpha_L) = 3.
\end{align*} 
By Definition \ref{def:wtaddbox}, we then compute the weight of each of the two addable boxes occurring in the right-hand diagram from Figure \ref{fig:vstrip} to be
\[ \wth(\alpha_R) =  t_{16-(3-1)}-t_{10}= t_{14}-t_{10} \quad \text{and} \quad\wth(\alpha_L) = t_{16-(7-3)}-t_{4} = t_{12}-t_4. \]
\end{enumerate}
\end{example}

 We are now able to formally state the main theorem in this paper, which provides an equivariant generalization of the quantum Pieri rule on cylindric shapes from \cite{Postnikov}, as well as closed combinatorial formulas for the equivariant quantum Pieri expansions provided in \cite{GKunpub,GK}.  For ease of reference, we repeat the theorem statement from the introduction here.

\EqQPieri*

\noindent Given an addable box $\alpha$ in a vertical strip, since $u(\alpha) >r(\alpha) > r(\alpha)-b(\alpha)$, then clearly $\wtv(\alpha) \in \Z_{\geq 0}[t_2-t_1,\dots, t_n-t_{n-1}]$, and similarly for horizontal strips.  In particular, this equivariant quantum Pieri rule is manifestly Graham-positive, using the conventions of this paper.  Moreover, setting all $t_i=0$ immediately recovers Postnikov's quantum Pieri rule from Theorem \ref{thm:post}.

We now provide several examples illustrating how to use Theorem \ref{thm:main} to efficiently compute equivariant quantum Littlewood-Richardson coefficients, directly on the skew shape $\lambda / \mu$ or $\lambda/1/\mu$.

\begin{example}\label{ex:thmex}
Fix $p=5$, and let $\mu = (6,6,6,3,2,0,0) \in P_{7,15}$. Since we have already seen that the cylindric shape $\lambda/\mu$ for the partition $\lambda = (7,6,6,4,2,1,0)$ is a vertical 3-strip $v^3$, we may use Theorem \ref{thm:main} to calculate the equivariant quantum Littlewood-Richardson coefficient $c_{(1)^5, \mu}^{\lambda,0}$.  Figure \ref{fig:vstrip} shows the three possible ways to extend $v^3 \rightarrow v^5$. By Theorem \ref{thm:main}, the left-hand configuration of addable boxes depicted in Figure \ref{fig:vstrip} contributes the product $(t_{12}-t_3)(t_{11}-t_3)$ of the weights calculated in Example \ref{ex:ABweight}.  In like manner, by Theorem \ref{thm:main}, in $QH^*_{T}(Gr(7,15))$ we have
\[ c_{(1)^5, \mu}^{\lambda,0} =  (t_{12}-t_3)(t_{11}-t_3) + (t_{12}-t_3)(t_{5}-t_2) + (t_{11}-t_2)(t_{5}-t_2),\]
recording the contributions from the remaining two configurations of addable boxes from left to right, where the weight of the top addable box is recorded on the left within each product.  
\end{example}

For comparison, we now use the equivariant quantum Pieri rule on cylindric shapes to calculate a full product in a smaller Grassmannian. We  highlight the fact that the entire calculation in Example \ref{ex:thmexfull} is performed quickly and directly on the four skew shapes  in Figure \ref{fig:fullprod}, without passing to any other Grassmannian or two-step flag variety.

\begin{example}\label{ex:thmexfull}
We now use Theorem \ref{thm:main} to compute the product $\sigma_{\tableau[pcY]{\\ \\ \\ }} \star \sigma_{\tableau[pcY]{& \\ \\ }} \in QH^*_T(Gr(3,5))$. Since $p=3$, we first identify all partitions $\lambda \in P_{35}$ such that $\lambda/d/\mu$ is a vertical $r$-strip for some $0 \leq r \leq 3$ and $d \in \{0,1\}$, and such that $\lambda/d/\mu$ can be extended to a vertical $3$-strip.  A quick inspection reveals that $\lambda \in \left\{ \tableau[pcY]{& \\ \\ \\ },\  \tableau[pcY]{& \\ & \\ \\ }\,, \sbullet[.5]\,,\  \tableau[pcY]{\\}\right\}$, and the 4 respective cylindric diagrams are depicted left to right by the red shaded boxes in Figure \ref{fig:fullprod}; refer to Figure \ref{fig:cylindric} and the surrounding discussion for more details on constructing the relevant cylindric loops.

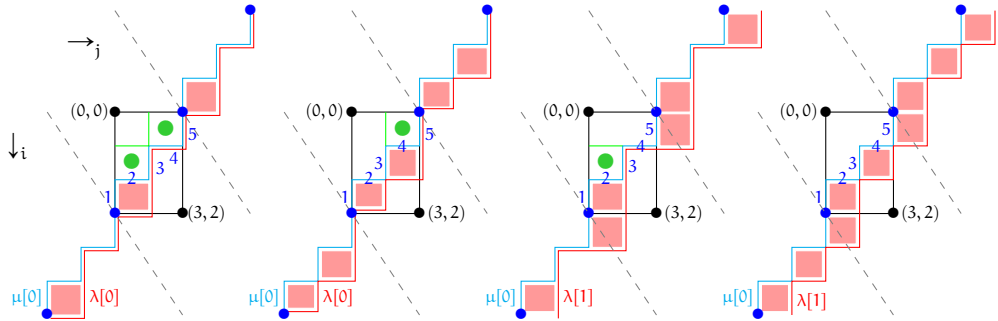
\begin{figure}[h]

\[
\begin{tikzpicture}[scale=.45]

\draw[-]

(10,10) -- (12,10) -- (12,7) -- (10,7) -- cycle
;

\draw[color=cyan]
(14,13) -- (14,12) -- (13,12) -- (13,11) -- (12,11) -- (12,10) -- (12,9) -- (11,9) -- (11,8) -- (10,8) -- (10,7) -- (10,6) -- (9,6) -- (9,5) -- (8,5) -- (8,4)
;

\draw[color=red]
(14.1,12.9) -- (14.1,11.9) -- (13.1,11.9) -- (13.1,10.9) -- (13.1,9.9) -- (12.1,9.9) -- (12.1,8.9) -- (11.1,8.9) -- (11.1,7.9) -- (11.1,6.9) -- (10.1,6.9) -- (10.1,5.9) -- (9.1,5.9) -- (9.1,4.9) -- (9.1,3.9) -- (8.1,3.9)
;

\draw[color=green]
(10,9) -- (11,9) -- (11,10);

\node[color = limegreen] at (10.5,8.5) {$\sbullet[1.5]$};
\node[color = limegreen] at (11.5,9.5) {$\sbullet[1.5]$};

\draw[dashed,color = gray]
(8,10) -- (10,7) -- (12,4)
(10,13) -- (12,10) -- (14,7);

\node[scale = .6, color = cyan] at (7.4,4.5) {$\mu[0]$};
\node[scale = .6, color = red] at (9.7,4.5) {$\lambda[0]$};

\path[draw,color = white, fill = red!40] (8.1,4) -- (8.1,4.9) -- (9,4.9) -- (9,4) -- cycle;
\path[draw,color = white, fill = red!40] (10.1,7.1) -- (10.1,7.9) -- (11,7.9) -- (11,7.1) -- cycle;
\path[draw,color = white, fill = red!40] (12.1,10) -- (12.1,10.9) -- (13,10.9) -- (13,10) -- cycle;

\node[scale = .6,color = blue] at (9.8,7.5) {$1$};
\node[scale = .6,color = blue] at (10.5,8.05) {$2$};
\node[scale = .6,color = blue] at (11.35,8.35) {$3$};
\node[scale = .6,color = blue] at (11.75,8.65) {$4$};
\node[scale = .6,color = blue] at (12.3,9.35) {$5$};

\node at (7,9) {$\downarrow$};
\node at (9,12) {$\rightarrow$};
\node[scale = .6] at (9.5,11.8) {$j$};
\node[scale = .6] at (7.3,8.8) {$i$};

\node[scale = .6] at (9.25,10) {$(0,0)$};
\node[scale = .6] at (12.7,7) {$(3,2)$};

\node[color = blue] at (8,4) {$\bullet$};
\node[color =  blue] at (14,13) {$\bullet$};

\node[color = blue] at (12,10) {$\bullet$};
\node[color =  blue] at (10,7) {$\bullet$};

\node at (10,10) {$\bullet$};
\node at (12,7) {$\bullet$};


\draw[-]

(17,10) -- (19,10) -- (19,7) -- (17,7) -- cycle
;

\draw[color=cyan]
(21,13) -- (21,12) -- (20,12) -- (20,11) -- (19,11) -- (19,10) -- (19,9) -- (18,9) -- (18,8) -- (17,8) -- (17,7) -- (17,6) -- (16,6) -- (16,5) -- (15,5) -- (15,4)
;

\draw[color=red]
(21.1,13.1) -- (21.1,12.1) -- (21.1,11) -- (20,11) -- (20,10.1) -- (19.1,10.1) -- (19.1,9.1) -- (19.1,8) -- (18,8) -- (18,7.1) -- (17.1,7.1) -- (17.1,6.1) -- (17.1,5) -- (16,5) -- (16,4.1) -- (15.1,4.1)
;

\draw[color = green]
(18,10) -- (18,9);

\draw[dashed,color = gray]
(15,10) -- (17,7) -- (19,4)
(17,13) -- (19,10) -- (21,7)
;

\node[scale = .6] at (16.2,10) {$(0,0)$};
\node[scale = .6] at (19.8,7) {$(3,2)$};

\node[color = blue] at (15,4) {$\bullet$};
\node[color =  blue] at (21,13) {$\bullet$};

\node[color = blue] at (19,10) {$\bullet$};
\node[color =  blue] at (17,7) {$\bullet$};

\node at (17,10) {$\bullet$};
\node at (19,7) {$\bullet$};

\node[color = limegreen] at (18.5,9.5) {$\sbullet[1.5]$};

\path[draw,color = white, fill = red!40] (15.1,4.2) -- (15.1,4.9) -- (15.9,4.9) -- (15.9,4.2) -- cycle;
\path[draw,color = white, fill = red!40] (16.1,5.1) -- (16.1,5.9) -- (17,5.9) -- (17,5.1) -- cycle;
\path[draw,color = white, fill = red!40] (17.1,7.2) -- (17.1,7.9) -- (17.9,7.9) -- (17.9,7.2) -- cycle;
\path[draw,color = white, fill = red!40] (17.1,7.2) -- (17.1,7.9) -- (17.9,7.9) -- (17.9,7.2) -- cycle;
\path[draw,color = white, fill = red!40] (18.1,8.1) -- (18.1,8.9) -- (18.9,8.9) -- (18.9,8.1) -- cycle;
\path[draw,color = white, fill = red!40] (19.1,10.15) -- (19.1,10.9) -- (19.9,10.9) -- (19.9,10.15) -- cycle;
\path[draw,color = white, fill = red!40] (20.1,11.1) -- (20.1,11.9) -- (21,11.9) -- (21,11.1) -- cycle;

\node[scale = .6, color = blue] at (16.8,7.5) {$1$};
\node[scale = .6, color = blue] at (17.5,8.05) {$2$};
\node[scale = .6, color = blue] at (17.8,8.5) {$3$};
\node[scale = .6, color = blue] at (18.5,9) {$4$};
\node[scale = .6, color = blue] at (19.3,9.35) {$5$};

\node[scale = .6,color = cyan] at (14.4,4.5) {$\mu[0]$};
\node[scale = .6,color = red] at (16.6,4.5) {$\lambda[0]$};


\draw[-]

(24,10) -- (26,10) -- (26,7) -- (24,7) -- cycle
;

\draw[color=cyan]
(28,13) -- (28,12) -- (27,12) -- (27,11) -- (26,11) -- (26,10) -- (26,9) -- (25,9) -- (25,8) -- (24,8) -- (24,7) -- (24,6) -- (23,6) -- (23,5) -- (22,5) -- (22,4)
;

\draw[color=red]
(29.1,13) -- (29.1,11.9) -- (28.1,11.9) -- (27.1,11.9) -- (27.1,10.9) -- (27.1,9.9) -- (27.1,8.9) -- (26.1,8.9) -- (25.1,8.9) -- (25.1,7.9) -- (25.1,6.9) -- (25.1,5.9) -- (24.1,5.9) -- (23.1,5.9) -- (23.1,4.9) -- (23.1,3.9)
;

\draw[color = green]
(25,9) -- (24,9)
;

\path[draw,color = white, fill = red!40] (22.1,4.1) -- (22.1,4.9) -- (23,4.9) -- (23,4.1) -- cycle;
\path[draw,color = white, fill = red!40] (24.1,6) -- (24.1,6.9) -- (25,6.9) -- (25,6) -- cycle;
\path[draw,color = white, fill = red!40] (24.1,7.1) -- (24.1,7.9) -- (25,7.9) -- (25,7.1) -- cycle;
\path[draw,color = white, fill = red!40] (26.1,9) -- (26.1,9.95) -- (27,9.95) -- (27,9) -- cycle;
\path[draw,color = white, fill = red!40] (26.1,10) -- (26.1,10.9) -- (27,10.9) -- (27,10) -- cycle;
\path[draw,color = white, fill = red!40] (28.1,12) -- (28.1,12.9) -- (29,12.9) -- (29,12) -- cycle;

\node[scale = .6] at (23.2,10) {$(0,0)$};
\node[scale = .6] at (26.8,7) {$(3,2)$};

\node[color = blue] at (22,4) {$\bullet$};
\node[color =  blue] at (28,13) {$\bullet$};

\node[color = blue] at (26,10) {$\bullet$};
\node[color =  blue] at (24,7) {$\bullet$};

\node at (24,10) {$\bullet$};
\node at (26,7) {$\bullet$};

\node[color = limegreen] at (24.5,8.5) {$\sbullet[1.5]$};

\node[color = cyan,scale = .6] at (21.4,4.5) {$\mu[0]$};
\node[color = red,scale = .6] at (23.7,4.5) {$\lambda[1]$};

\node[color = blue,scale = .6] at (23.8,7.5) {$1$};
\node[color = blue,scale = .6] at (24.5,8.05) {$2$};
\node[color = blue,scale = .6] at (25.3,8.4) {$3$};
\node[color = blue,scale = .6] at (25.55,9) {$4$};
\node[color = blue,scale = .6] at (25.8,9.5) {$5$};

\draw[dashed,color = gray]
(22,10) -- (24,7) -- (26,4)
(24,13) -- (26,10) -- (28,7);
 

\draw[-]
(31,10) -- (33,10) -- (33,7) -- (31,7) -- cycle
;

\draw[color=cyan]
(35,13) -- (35,12) -- (34,12) -- (34,11) -- (33,11) -- (33,10) -- (33,9) -- (32,9) -- (32,8) -- (31,8) -- (31,7) -- (31,6) -- (30,6) -- (30,5) -- (29,5) -- (29,4)
;

\draw[color=red]
(36,13) -- (36,12) -- (35,12) -- (35,11) -- (34,11) -- (34,10) -- (34,9) -- (33,9) -- (33,8) -- (32,8) -- (32,7) -- (32,6) -- (31,6) -- (31,5) -- (30,5) -- (30,4)
;

\node[scale = .6] at (30.2,10) {$(0,0)$};
\node[scale = .6] at (33.8,7) {$(3,2)$};

\node at (31,10) {$\bullet$};
\node at (33,7) {$\bullet$};

\node[scale = .6,color = cyan] at (28.4,4.5) {$\mu[0]$};
\node[scale = .6,color = red] at (30.6,4.4) {$\lambda[1]$};

\path[draw,color = white, fill = red!40] (29.1,4.1) -- (29.1,4.9) -- (29.9,4.9) -- (29.9,4.1) -- cycle;
\path[draw,color = white, fill = red!40] (30.1,5.1) -- (30.1,5.9) -- (30.9,5.9) -- (30.9,5.1) -- cycle;
\path[draw,color = white, fill = red!40] (31.1,6.1) -- (31.1,6.9) -- (31.9,6.9) -- (31.9,6.1) -- cycle;
\path[draw,color = white, fill = red!40] (31.1,7.1) -- (31.1,7.9) -- (31.9,7.9) -- (31.9,7.1) -- cycle;
\path[draw,color = white, fill = red!40] (32.1,8.1) -- (32.1,8.9) -- (32.9,8.9) -- (32.9,8.1) -- cycle;
\path[draw,color = white, fill = red!40] (33.1,9.1) -- (33.1,9.95) -- (33.9,9.95) -- (33.9,9.1) -- cycle;
\path[draw,color = white, fill = red!40] (33.1,10.05) -- (33.1,10.9) -- (33.9,10.9) -- (33.9,10.05) -- cycle;
\path[draw,color = white, fill = red!40] (34.1,11.1) -- (34.1,11.9) -- (34.9,11.9) -- (34.9,11.1) -- cycle;
\path[draw,color = white, fill = red!40] (35.1,12.1) -- (35.1,12.9) -- (35.9,12.9) -- (35.9,12.1) -- cycle;

\node[color = blue] at (29,4) {$\bullet$};
\node[color =  blue] at (35,13) {$\bullet$};

\node[color = blue] at (33,10) {$\bullet$};
\node[color =  blue] at (31,7) {$\bullet$};

\node[scale = .6,color = blue] at (30.8,7.5) {$1$};
\node[scale = .6,color = blue] at (31.5,8.05) {$2$};
\node[scale = .6,color = blue] at (31.8,8.5) {$3$};
\node[scale = .6,color = blue] at (32.5,9) {$4$};
\node[scale = .6,color = blue] at (32.8,9.5) {$5$};

\draw[dashed,color = gray]
(29,10) -- (31,7) -- (33,4)
(31,13) -- (33,10) -- (35,7);


;

\end{tikzpicture}
\]

\caption{Using Theorem \ref{thm:main} to calculate $\sigma_{(1)^3} \star \sigma_{(2,1)} \in QH^*_T(Gr(3,5))$.}\label{fig:fullprod}
\end{figure}

In this example, for each $\lambda/d/\mu = v^r$, there is a unique extension $v^r \rightarrow v^3$, and the addable boxes in each extension are indicated by a green $\green{\sbullet[1.5]}$ in Figure \ref{fig:fullprod}.  Calculating the weight of each addable box using Definition \ref{def:wtaddbox}, by Theorem \ref{thm:main} we then immediately obtain 
\[ \sigma_{\tableau[pcY]{\\ \\ \\ }} \star \sigma_{\tableau[pcY]{& \\ \\ }} = (t_5-t_1)(t_3-t_1)\sigma_{\tableau[pcY]{& \\ \\ \\ }} + (t_5-t_1)\sigma_{\tableau[pcY]{& \\ & \\ \\ }} + q(t_3-t_1) \sigma_{\sbullet[.5]} +q \sigma_{\tableau[pcY]{\\}} \in QH^*_T(Gr(3,5)),\]
where we record the product in the same order as the corresponding diagram in Figure \ref{fig:fullprod}.
\end{example}


\section{Equivariant Pieri rules via localization}\label{sec:Localizations}

The goal of this section is to relate the weights which arise in the equivariant quantum Pieri rule on cylindric shapes of Theorem \ref{thm:main} with those appearing as localizations in the corresponding formula of Huang and Li reviewed in Section \ref{sec:PieriLoc} as Theorem \ref{T:HuangLi}. We provide the required background on the localization map in Section \ref{sec:loc}, and then relate the localizations appearing in Theorem \ref{T:HuangLi} to the weights occurring in Theorem \ref{thm:main} in Section \ref{sec:ABcutjoin}.

\subsection{The localization map}\label{sec:loc}

In \cite{KostantKumar}, Kostant and Kumar define a family of functions which determines the ring structure for the equivariant cohomology of any Kac-Moody flag variety. In the special case of the Grassmannian, the Schubert class $[X_\gamma] \in H^*_{T}(Gr(m,n))$ for any partition
 $\gamma \in P_{mn}$ is identified with a function $\xi^\gamma: P_{mn} \rightarrow \Z[t_1, \dots, t_n]$. When evaluated at an element $\eta \in P_{mn}$, the polynomial $\xi^\gamma(\eta)$ is homogeneous of degree $|\gamma|$. Moreover, the polynomials $\xi^\gamma(\eta)$ satisfy the Graham-positivity property, meaning that the function values $\xi^\gamma(\eta) \in \Z_{\geq 0}[t_2-t_1, \dots, t_{n}-t_{n-1}]$, using the conventions in this paper; refer to Sections \ref{sec:QCoh} and \ref{sec:eqQCoh} for more details.

We briefly review the precise relationship between the function $\xi^\gamma$ and the Schubert class $[X_\gamma]$, referring the reader to \cite[Section 11.3]{Kumar} for more details.  The inclusion of the $T$-fixed points in $Gr(m,n)$, themselves also indexed by partitions $\eta \in P_{mn}$, induces an injection 
\[ H^*_{T}(Gr(m,n)) \hookrightarrow H^*_{T}(Gr(m,n)^{T}) \cong \bigoplus_{\eta \in P_{mn}} \Z[t_1, \dots, t_n]. \]
By definition, the induced map sends the Schubert class $[X_\gamma]$ to the collection of localizations $[X_\gamma] |_\eta$ at all $T$-fixed points indexed by $\eta \in P_{mn}$.  In turn, under the above isomorphism, the localization $[X_\gamma] |_\eta$ is identified with the polynomial $\xi^\gamma(\eta)$.
Altogether, this localization map identifies each $T$-equivariant Schubert class with a polynomial-valued function via $[X_\gamma] \mapsto \xi^\gamma$.  As such, the ring structure of $H^*_{T}(Gr(m,n))$ with respect to the Schubert basis is completely determined by the pointwise product of the family of functions $\xi^\gamma$. Moreover, the image of $H^*_{T}(Gr(m,n))$ in this polynomial ring has a simple characterization due to  Goresky, Kottwitz, and MacPherson \cite{GKM}, which is the starting point for GKM theory; see the survey by Tymoczko  \cite{TymGKM}. If the ambient Grassmannian is not clear from context (e.g.~if it differs from $Gr(m,n)$), we shall use a subscript $\xi^\gamma_k(\eta)$ to indicate that $\gamma,\eta \in P_{kn}$ instead.

The functions $\xi^\gamma$ are uniquely determined by several straightforward properties, including the following support condition.

\begin{lemma}[Corollary 11.1.12 \cite{Kumar}]\label{lem:xisupport}
For any $\gamma,\eta \in P_{mn}$, we have $\xi^\gamma(\eta)=0$ if and only if $\gamma \not\subseteq \eta$.
\end{lemma}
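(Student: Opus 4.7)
The plan is to translate the lemma into a geometric statement by identifying $\xi^\gamma(\eta) = [X_\gamma]|_{e_\eta}$ with the restriction of the Schubert class along the inclusion of the $T$-fixed point $e_\eta \hookrightarrow Gr(m,n)$, as set up in Section \ref{sec:loc}. Under this identification, the vanishing $\xi^\gamma(\eta) = 0$ is equivalent to $e_\eta \notin X_\gamma$, so it suffices to verify that $e_\eta \in X_\gamma$ if and only if $\gamma \subseteq \eta$.

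First I would recall that the $T$-fixed points of $Gr(m,n)$ are the coordinate $m$-subspaces $\operatorname{Span}(\mathbf{e}_{i_1}, \dots, \mathbf{e}_{i_m})$ and are in natural bijection with the partitions $\eta \in P_{mn}$ via $e_\eta = w_\eta P/P$ for $w_\eta \in W^P$. Under the opposite Borel conventions of Section \ref{sec:QCoh} (so that $X_\gamma$ has codimension $|\gamma|$), I would check directly from the rank conditions defining $X_\gamma$ that $e_\eta \in X_\gamma$ precisely when $\gamma \subseteq \eta$ as Young diagrams, equivalently when $w_\gamma \leq w_\eta$ in the Bruhat order of $W^P$.

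For the direction $\gamma \not\subseteq \eta \Rightarrow \xi^\gamma(\eta) = 0$, I would argue by excision. Since $X_\gamma$ is closed and $T$-invariant in $Gr(m,n)$ and does not contain $e_\eta$, there is a $T$-stable open neighborhood $U$ of $e_\eta$ disjoint from $X_\gamma$, so $[X_\gamma]|_U = 0$ in $H^*_T(U)$, and a fortiori the further restriction to $e_\eta$ yields $\xi^\gamma(\eta) = [X_\gamma]|_{e_\eta} = 0$.

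For the converse $\gamma \subseteq \eta \Rightarrow \xi^\gamma(\eta) \neq 0$, I would appeal to the Billey formula (equivalently, the AJS recursion recorded in \cite[Chapter 11]{Kumar}), which exhibits $\xi^\gamma(\eta)$ as a Graham-positive sum of products of positive roots indexed by reduced subwords of a fixed reduced expression for $w_\eta$ that spell out $w_\gamma$. The Bruhat condition $w_\gamma \leq w_\eta$ guarantees that at least one such subword exists, and positivity precludes cancellation, so $\xi^\gamma(\eta)$ is a nonzero polynomial in $\Z[t_1, \dots, t_n]$. The main obstacle is this nonvanishing direction: the excision step is essentially automatic, whereas the converse either requires the explicit Billey positivity or a direct appeal to \cite[Corollary 11.1.12]{Kumar}.
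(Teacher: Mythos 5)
The paper does not actually prove Lemma \ref{lem:xisupport}; it is imported wholesale as Corollary 11.1.12 of \cite{Kumar}, so there is no internal proof to compare against. Your argument is a correct, self-contained justification and is essentially the standard one. The identification $\xi^\gamma(\eta)=[X_\gamma]|_{e_\eta}$ is exactly the setup of Section \ref{sec:loc}; the vanishing direction is indeed immediate, since $[X_\gamma]$ lifts to equivariant cohomology with supports in the closed $T$-invariant subvariety $X_\gamma$ and therefore dies on the open complement, hence at any fixed point $e_\eta\notin X_\gamma$. You are also right that the nonvanishing direction is the only real content: the subword property of Bruhat order supplies at least one term of Billey's formula \cite{Billey,AJS} when $w_\gamma\leq w_\eta$, and each term is a product of positive roots, so every summand lies in $\Z_{\geq 0}[t_2-t_1,\dots,t_n-t_{n-1}]\setminus\{0\}$ (in the paper's sign conventions) and no cancellation can occur; this Graham-positivity of the localizations is even recorded explicitly in Section \ref{sec:loc}. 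The one point worth verifying carefully is the orientation of the conventions: with $B$ lower-triangular and $X_\gamma=\overline{B^-w_\gamma P/P}$ of codimension $|\gamma|$, one has $e_\eta\in X_\gamma$ precisely when $X_\eta\subseteq X_\gamma$, i.e.\ $w_\gamma\leq w_\eta$, i.e.\ $\gamma\subseteq\eta$, which matches the statement rather than its reverse. Compared with simply citing \cite{Kumar}, your route buys a geometric explanation of the support condition and makes visible why the nonvanishing half is a positivity statement, at the cost of invoking Billey's formula, which the paper only introduces afterwards as a computational tool.
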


\noindent The polynomials $\xi^\gamma(\eta)$ themselves can be computed in many different ways, including by the original formula of  Billey \cite{Billey}, formulated independently by Anderson, Jantzen, and Soergel \cite{AJS}. Ikeda and Naruse \cite{IkedaNaruse} interpret Billey's formula for the Grassmannian using excited Young diagrams; see Example \ref{ex:HLex} for an illustration of this method.   As we shall see, certain polynomials $\xi^\gamma(\eta)$ also coincide with special cases of equivariant Littlewood-Richardson coefficients.

\subsection{An equivariant Pieri rule via localization}\label{sec:PieriLoc}

Huang and Li proved the first equivariant quantum Pieri rule for any type $A$ partial flag variety in Theorem 3.10 of \cite{HuangLi}, a special case of which we review below as Theorem \ref{T:HuangLi}.  Their Pieri rule for the Grassmannian records the (classical) equivariant Littlewood-Richardson coefficient $c_{(1)^p,\mu}^{\lambda}$ by specifying a column-shape Schubert variety in a smaller Grassmannian, and localizing at a $T$-fixed point indexed by a partition obtained from $\lambda$ and $\mu$ via a join-and-cut algorithm.

More precisely, given a pair of partitions $\mu \subseteq \lambda \in P_{mn}$ such that the skew shape $\lambda/\mu=v^r$, Huang and Li define a new diagram $\lambda_\mu$ in the shorter rectangle $P_{m-r, n}$.  The closed formula for $\lambda_\mu$ is provided in \cite[Definition 3.14]{HuangLi}, and interpreted through a join-and-cut process explained in \cite[Definition 3.15]{HuangLi}. We now review the construction of $\lambda_\mu$, using a slight reformulation of the join-and-cut algorithm from \cite{HuangLi}. Given any $k \in [m]:=\{1,\dots, m\}$, the \emph{staircase} diagram with $k$ parts is defined as $\delta^k := (k-1,\dots, 1,0)$.

\begin{definition}\label{def:cutjoin}

Let $\mu=(\mu_1, \dots, \mu_m), \lambda \subseteq (\lambda_1, \dots, \lambda_m) \in P_{mn}$ be such that the skew shape $\lambda/\mu = v^r$ for some $0 \leq r \leq m$.   Construct a new partition by performing the following \emph{join-and-cut algorithm}:
\begin{enumerate}
\item Add the staircase $\delta^{m}$ to the diagram $\lambda$, where parts are added coordinate-wise.
\item Delete from the diagram resulting from step (1) those $r$ rows in which the original shapes $\lambda$ and $\mu$ differ; i.e. remove every row from $\lambda+\delta^m$ for which $\lambda_i \neq \mu_i$.
\item Remove the staircase $\delta^{m-r}$ from the shape resulting from step (2), where parts are subtracted coordinate-wise.
\end{enumerate}

The partition resulting from this join-and-cut algorithm is denoted by $\lambda_\mu \in P_{m-r,n}$. Equivalently, if we denote by $i_1 < \cdots < i_{m-r}$ those parts such that $\lambda_{i_\ell} = \mu_{i_\ell}$, we have
\begin{equation}\label{eq:cutjoin}
\lambda_\mu = (\mu_{i_1}-i_1+r+1, \mu_{i_2}-i_2+r+2, \dots, \mu_{i_{m-r}}-i_{m-r}+m) \in P_{m-r,n}.
\end{equation}
\end{definition}

We illustrate this join-and-cut algorithm in the following example. 

\begin{example}\label{ex:cutjoin}

Consider $\mu = (6,6,6,3,2,0,0) \subset \lambda = (7,6,6,4,2,1,0) \in P_{7,15}$, and recall that the skew shape $\lambda /\mu$ in the $7 \times 8$ rectangle is the vertical 3-strip consisting of the boxes containing a $\red{\sstar[1.5]}$ in the figure below.  We illustrate the join-and-cut algorithm, as described in Definition \ref{def:cutjoin}. Step (1) adds the staircase $\delta^7$ to $\lambda$ coordinate-wise. In the left-hand figure, the boxes of $\delta^7$ in the partition $\lambda + \delta^7$ are indicated with blue shading:
\[
\begin{tikzpicture}[scale=.45]

\path [draw, fill=blue!40] (21,8)--  (15,8) -- (15,7) -- (21,7);
\path [draw, fill=blue!40] (19,7)--  (14,7) -- (14,6) -- (19,6);
\path [draw, fill=blue!40] (18,6)--  (14,6) -- (14,5) -- (18,5);
\path [draw, fill=blue!40] (15,5)--  (12,5) -- (12,4) -- (15,4);
\path [draw, fill=blue!40] (12,4)--  (10,4) -- (10,3) -- (12,3);
\path [draw, fill=blue!40] (10,3)--  (9,3) -- (9,2) -- (10,2);

\draw[-]

(8,8) -- (21,8) 
(8,7) -- (21,7) 
(8,6) -- (19,6)
(8,5) -- (18,5) 
(8,4) -- (15,4) 
(8,3) -- (12,3)
(8,2) -- (10,2)
(8,1) -- (8,1)
(8,8) -- (8,1)
(9,8) -- (9,2)
(10,8) -- (10,2)
(11,8) -- (11,3)
(12,8) -- (12,3)
(13,8) -- (13,4)
(14,8) -- (14,4)
(15,8) -- (15,4)
(16,8) -- (16,5)
(17,8) -- (17,5)
(18,8) -- (18,5)
(19,8) -- (19,6)
(20,8) -- (20,7)
(21,8) -- (21,7)

(7.5,7.5) -- (21.5,7.5)
(7.5,4.5) -- (15.5,4.5)
(7.5,2.5) -- (10.5,2.5)
;

\node[color=red] at (14.5,7.5) {$\sstar[2]$};
\node[color=red] at (11.5,4.5) {$\sstar[2]$};
\node[color=red] at (8.5,2.5) {$\sstar[2]$};

\end{tikzpicture}
\quad\quad
\begin{tikzpicture}[scale=.45]

\path [draw, fill=darkgray!40] (19,8)--  (16,8) -- (16,7) -- (19,7);
\path [draw, fill=darkgray!40] (18,7)--  (16,7) -- (16,6) -- (18,6);
\path [draw, fill=darkgray!40] (12,6)--  (11,6) -- (11,5) -- (12,5);

\draw[-]

(8,8) -- (19,8) 
(8,7) -- (19,7) 
(8,6) -- (18,6)
(8,5) -- (12,5) 
(8,1) -- (8,1)
(8,8) -- (8,4)
(9,8) -- (9,5)
(10,8) -- (10,5)
(11,8) -- (11,5)
(12,8) -- (12,5)
(13,8) -- (13,6)
(14,8) -- (14,6)
(15,8) -- (15,6)
(16,8) -- (16,6)
(17,8) -- (17,6)
(18,8) -- (18,6)
(19,8) -- (19,7)
;

(7.5,7.5) -- (21.5,7.5)
(7.5,4.5) -- (15.5,4.5)
(7.5,2.5) -- (10.5,2.5)
;
\end{tikzpicture}\]
\[\lambda + \delta^7 = (13,11,10,7,4,2,0) \qquad \mapsto \qquad (11,10,4,0) \in P_{4,15}\]
Step (2) calls for removing the rows where $\lambda_i \not= \mu_i$; these 3 rows (each of which contains a $\red{\sstar[1.5]}$) are indicated in the left-hand figure using strike-through.
Finally, Step (3) requires us to remove the staircase $\delta^4$, indicated in the right-hand figure with gray shading, from which we obtain
\[ \lambda_\mu = (11,10,4,0) - \delta^4 = (8,8,3,0).\]
 For comparison, those parts such that $\lambda_{i_\ell} = \mu_{i_\ell}$ are given by  $\{i_1,i_2,i_3,i_4\} = \{2,3,5,7\}$, so that 
\[\lambda_\mu = (6-2+4, 6-3+5, 2-5+6, 0-7+7) = (8,8,3,0) \in P_{4,15}\]
by the closed formula \eqref{eq:cutjoin}. 
\end{example}

Together with any available formula for computing the localization $\xi^\gamma(\eta)$, the following (special case of a) theorem of Huang and Li provides a rule for computing products by Schubert classes indexed by column shapes in the equivariant cohomology of the Grassmannian.

\begin{theorem}[Equivariant Pieri Rule, Theorem 3.17 \cite{HuangLi}]\label{T:HuangLi}
For any integer $1 \leq p \leq m$ and any partition $\mu \in P_{m,n}$, we have
\begin{equation*}
\sigma_{(1)^p} \circ \sigma_\mu =\sum\limits_{0 \leq r \leq p}\, \sum\limits_ {\lambda/\mu = v^r} \xi^{(1)^{p-r}}_{m-r}(\lambda_\mu)\sigma_\lambda
\end{equation*}
in $H^*_{T}(Gr(m,n+1))$, where $\lambda \in P_{m,n+1}$.
\end{theorem}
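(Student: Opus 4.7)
My approach would be to verify the equivariant Pieri identity via localization, by recognizing both sides as evaluations of elementary factorial Schur polynomials. To begin, I would establish the support condition: by Lemma \ref{lem:xisupport} together with the classical Pieri rule (recovered by specializing $t_i = 0$) and Graham-positivity of the equivariant Littlewood-Richardson coefficients, the coefficient of $\sigma_\lambda$ in $\sigma_{(1)^p} \circ \sigma_\mu$ can be nonzero only when $\mu \subseteq \lambda$ and $\lambda/\mu$ is a vertical strip $v^r$ with $0 \leq r \leq p$. The classical case $r = p$ gives coefficient $1$, matching $\xi^{(1)^0}_{m-p}(\lambda_\mu) = 1$, so the classical stratum of the identity is immediate.

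For the equivariant terms with $r < p$, the plan is to compute $c^{\lambda}_{(1)^p,\mu}$ directly via a known Graham-positive formula, for instance Molev's barred tableau rule or the specialization of Knutson--Tao puzzles to column-shaped Schubert classes. In the Pieri regime, the combinatorics collapses dramatically, and the coefficient should simplify to an elementary factorial symmetric polynomial in torus weight differences indexed by the rows of $\mu$ that are preserved in $\lambda$. Independently, I would evaluate the target localization $\xi^{(1)^{p-r}}_{m-r}(\lambda_\mu)$ using the Ikeda--Naruse excited Young diagram formula (or Billey's formula via reduced words); because $(1)^{p-r}$ is a single column, there are no non-trivial excitations, and this evaluation also reduces to an elementary factorial symmetric polynomial, this time in torus weight differences determined by the vertical edges on the boundary of $\lambda_\mu$ inside its $(m-r) \times (n-m+r)$ ambient rectangle.

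The core of the proof is then the combinatorial identification of these two elementary factorial polynomials. Unwinding the closed formula \eqref{eq:cutjoin}, the preserved-row indices $i_1 < \cdots < i_{m-r}$ of the vertical strip $\lambda/\mu = v^r$ should correspond, after the staircase addition and subtraction, precisely to the boundary edges of $\lambda_\mu$ that produce the torus weights in the Ikeda--Naruse specialization. I expect the main obstacle to be the careful index bookkeeping: each of the three join-and-cut steps (adding $\delta^m$, deleting the $r$ rows where $\lambda$ and $\mu$ differ, and subtracting $\delta^{m-r}$) reshuffles the torus weight indices, and one must verify that the net effect preserves the underlying multiset of weight differences appearing in the Pieri coefficient. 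I would handle this via induction on $r$, building up the deleted-row set one row at a time and invoking the classical Pieri rule at each step to control which excited diagrams contribute to the evaluation of $\xi^{(1)^{p-r}}_{m-r}$ at $\lambda_\mu$. A cleaner alternative would be to induct on $p$ using Mihalcea's equivariant Pieri--Chevalley rule as a base, and then match the recursion term-by-term through \eqref{eq:cutjoin}, which avoids the combinatorics of Molev tableaux at the cost of a more intricate inductive identity.
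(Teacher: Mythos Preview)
The paper does not prove Theorem~\ref{T:HuangLi}: it is quoted from \cite{HuangLi} as a known result and used as an input to the proof of Theorem~\ref{thm:main}. So there is no proof in this paper to compare against; what the paper does do is show that the addable-box expression in Theorem~\ref{thm:main} agrees with the localization $\xi^{(1)^{p-r}}_{m-r}(\lambda_\mu)$ appearing on the right of Theorem~\ref{T:HuangLi}, by passing through the specialized factorial elementary polynomial $e_{p'}(x_{\pi_{\lambda_\mu}}|t)$ (Proposition~\ref{prop:PsiAB}, Lemma~\ref{lem:Psielem}, Proposition~\ref{prop:PsiHL}). That is close in spirit to one half of your plan, but it does not touch the left-hand side $c^\lambda_{(1)^p,\mu}$ directly; it simply invokes Theorem~\ref{T:HuangLi} for that.

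On your proposal itself, two concrete issues. First, the claim that for a single column $(1)^{p-r}$ ``there are no non-trivial excitations'' is false: Example~\ref{ex:HLex} in this very paper exhibits three excited states of $(1)^2$ inside $\lambda_\mu$. What is true is that the sum over all excited states of a column shape equals the specialized factorial elementary polynomial $e_{p-r}(x_{\pi_{\lambda_\mu}}|t)$ (this is Theorem~\ref{thm:facSchur} applied to $\gamma=(1)^{p-r}$), so your conclusion survives but the reasoning needs correction. Second, and more seriously, the step where you compute $c^\lambda_{(1)^p,\mu}$ via Molev tableaux or puzzles and assert that ``the combinatorics collapses dramatically'' to an elementary factorial polynomial is the entire content of the theorem; neither Molev's rule nor the puzzle rule simplifies to this form without a nontrivial argument, and you have not supplied one. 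Your fallback of inducting on $p$ via the Pieri--Chevalley rule is closer to how \cite{HuangLi} actually proceed, and is the more realistic route.
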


\noindent The switch to $Gr(m,n+1)$ in Theorem \ref{T:HuangLi}, which persists throughout the remainder of the paper, is intentionally made to prepare for the application of the equivariant rim hook rule in the proof of Theorem \ref{thm:main}, in order to obtain an equivariant quantum Pieri rule for $Gr(m,n)$.  Note that Huang and Li also deliberately work in $Gr(m,n+1)$ throughout \cite{HuangLi}, albeit for different reasons.

For the sake of completeness, we now illustrate Theorem \ref{T:HuangLi} by calculating the same equivariant Littlewood-Richardson coefficient as in Example \ref{ex:thmex}. See \cite{TymBilley} for a survey on the localizations occurring in Theorem \ref{T:HuangLi}, including for more details on the method of excited Young diagrams illustrated in the following example.

\begin{example}\label{ex:HLex}
Let $p=5$, and recall that for $\mu = (6,6,6,3,2,0,0) \subset \lambda = (7,6,6,4,2,1,0) \in P_{7,15}$ with $r=3$, we have $\lambda_\mu = (8,8,3,0) \in P_{4,15}$ by Example \ref{ex:cutjoin}. 

Following \cite{IkedaNaruse}, we use the method of excited Young diagrams for calculating the localization $\xi^{(1)^2}_4(\lambda_\mu)$ required to determine the equivariant Littlewood-Richardson coefficient $c_{(1)^5,\mu}^{\lambda}$ using Theorem \ref{T:HuangLi}. In this case, there are 3 possible excited states corresponding to the column shape $(1)^2$ inside the diagram $\lambda_\mu \in P_{4,15}$, as follows:
\[
\begin{tikzpicture}[scale=.45]

\draw[-]
(8,8) -- (16,8) 
(8,7) -- (16,7) 
(8,6) -- (16,6)
(8,5) -- (11,5) 
(8,8) -- (8,4)
(9,8) -- (9,5)
(10,8) -- (10,5)
(11,8) -- (11,5)
(12,8) -- (12,6)
(13,8) -- (13,6)
(14,8) -- (14,6)
(15,8) -- (15,6)
(16,8) -- (16,6);
\node[color=blue,font=\scriptsize] at (8.5,7.5) {$\sasterisk[2]$};
\node[color=blue,font=\scriptsize] at (8.5,6.5) {$\sasterisk[2]$};
\node[color=purple,font=\scriptsize] at (7.85,4.5) {$1$};
\node[color=purple,font=\scriptsize] at (8.5,4.75) {$2$};
\node[color=purple,font=\scriptsize] at (9.5,4.75) {$3$};
\node[color=purple,font=\scriptsize] at (10.5,4.75) {$4$};
\node[color=purple,font=\scriptsize] at (10.85,5.5) {$5$};
\node[color=purple,font=\scriptsize] at (11.5,5.75) {$6$};
\node[color=purple,font=\scriptsize] at (12.5,5.75) {$7$};
\node[color=purple,font=\scriptsize] at (13.5,5.75) {$8$};
\node[color=purple,font=\scriptsize] at (14.5,5.75) {$9$};
\node[color=purple,font=\scriptsize] at (15.5,5.75) {$10$};
\node[color=purple,font=\scriptsize] at (15.75,6.5) {$11$};
\node[color=purple,font=\scriptsize] at (15.75,7.5) {$12$};

\end{tikzpicture}
\hskip 15pt
\begin{tikzpicture}[scale=.45]

\draw[-]
(8,8) -- (16,8) 
(8,7) -- (16,7) 
(8,6) -- (16,6)
(8,5) -- (11,5) 
(8,8) -- (8,4)
(9,8) -- (9,5)
(10,8) -- (10,5)
(11,8) -- (11,5)
(12,8) -- (12,6)
(13,8) -- (13,6)
(14,8) -- (14,6)
(15,8) -- (15,6)
(16,8) -- (16,6);
\node[color=blue,font=\scriptsize] at (8.5,7.5) {$\sasterisk[2]$};
\node[color=blue,font=\scriptsize] at (9.5,5.5) {$\sasterisk[2]$};
\node[color=purple,font=\scriptsize] at (7.85,4.5) {$1$};
\node[color=purple,font=\scriptsize] at (8.5,4.75) {$2$};
\node[color=purple,font=\scriptsize] at (9.5,4.75) {$3$};
\node[color=purple,font=\scriptsize] at (10.5,4.75) {$4$};
\node[color=purple,font=\scriptsize] at (10.85,5.5) {$5$};
\node[color=purple,font=\scriptsize] at (11.5,5.75) {$6$};
\node[color=purple,font=\scriptsize] at (12.5,5.75) {$7$};
\node[color=purple,font=\scriptsize] at (13.5,5.75) {$8$};
\node[color=purple,font=\scriptsize] at (14.5,5.75) {$9$};
\node[color=purple,font=\scriptsize] at (15.5,5.75) {$10$};
\node[color=purple,font=\scriptsize] at (15.75,6.5) {$11$};
\node[color=purple,font=\scriptsize] at (15.75,7.5) {$12$};

\end{tikzpicture}
\hskip 15pt
\begin{tikzpicture}[scale=.45]

\draw[-]
(8,8) -- (16,8) 
(8,7) -- (16,7) 
(8,6) -- (16,6)
(8,5) -- (11,5) 
(8,8) -- (8,4)
(9,8) -- (9,5)
(10,8) -- (10,5)
(11,8) -- (11,5)
(12,8) -- (12,6)
(13,8) -- (13,6)
(14,8) -- (14,6)
(15,8) -- (15,6)
(16,8) -- (16,6);
\node[color=blue,font=\scriptsize] at (9.5,6.5) {$\sasterisk[2]$};
\node[color=blue,font=\scriptsize] at (9.5,5.5) {$\sasterisk[2]$};
\node[color=purple,font=\scriptsize] at (7.85,4.5) {$1$};
\node[color=purple,font=\scriptsize] at (8.5,4.75) {$2$};
\node[color=purple,font=\scriptsize] at (9.5,4.75) {$3$};
\node[color=purple,font=\scriptsize] at (10.5,4.75) {$4$};
\node[color=purple,font=\scriptsize] at (10.85,5.5) {$5$};
\node[color=purple,font=\scriptsize] at (11.5,5.75) {$6$};
\node[color=purple,font=\scriptsize] at (12.5,5.75) {$7$};
\node[color=purple,font=\scriptsize] at (13.5,5.75) {$8$};
\node[color=purple,font=\scriptsize] at (14.5,5.75) {$9$};
\node[color=purple,font=\scriptsize] at (15.5,5.75) {$10$};
\node[color=purple,font=\scriptsize] at (15.75,6.5) {$11$};
\node[color=purple,font=\scriptsize] at (15.75,7.5) {$12$};

\end{tikzpicture}
\]
Recording the up-steps and side-steps for $\lambda_\mu$ along the boundary of the diagram in red, the weight of an excited box (indicated in the figure above with a blue asterisk $\blue{\sasterisk[1.25]}$) is obtained directly from the corresponding up-step and side-step as $t_{u(\blue{\sasterisk[.75]})} - t_{s(\blue{\sasterisk[.75]})}$, using the conventions of this paper, which differ in sign from \cite{IkedaNaruse}.  For example, the weight of the top excited box in the left-hand figure is $t_{12}-t_2$. Altogether, we then have
\[c_{(1)^5,\mu}^{\lambda} = (t_{12}-t_2)(t_{11}-t_2)+(t_{12}-t_2)(t_{5}-t_3)+(t_{11}-t_3)(t_{5}-t_3)\]
in $H^*_{T}(Gr(7,15))$, where the weights are summed over the 3 excited states from left to right, with the weight of the top excited box recorded on the left within each product.
Most importantly for our purposes, we invite the reader to verify by regrouping terms that the above equivariant Littlewood-Richardson coefficient $c_{(1)^5,\mu}^{\lambda} = c_{(1)^5,\mu}^{\lambda,0}$ from Example \ref{ex:thmex}.
\end{example}

\begin{remark}\label{rmk:HLABnotbijective}
It is purely a coincidence that the number of excited states for the localization in Example \ref{ex:HLex} equals the number of configurations of addable boxes in Example \ref{ex:thmex}. In general, there are many more extensions $v^r \rightarrow v^p$ of the skew shape $\lambda/\mu$ than excited states of $(1)^{p-r}$ in the diagram $\lambda_\mu$, and equating individual equivariant Littlewood-Richardson coefficients via Theorems \ref{thm:main} and \ref{T:HuangLi} is an exercise in clever polynomial algebra.
\end{remark}

\subsection{Addable boxes as localizations}\label{sec:ABcutjoin}

In order to directly compare the equivariant Littlewood-Richardson coefficients in Theorem \ref{thm:main} to the corresponding localizations occurring in Theorem \ref{T:HuangLi}, we now explain how to calculate the weight of an addable box using the partition $\lambda_\mu$ resulting from the join-and-cut procedure, instead of the original skew shape $\lambda/\mu$. In light of Remark \ref{rmk:rectreduction}, throughout the remainder of this section, we typically refer to addable boxes within a fixed rectangle, as opposed to on the cylinder.

\begin{lemma}\label{lem:HLdiagram}

Fix an integer $1 \leq p \leq m$, and let $\mu=(\mu_1, \dots, \mu_m) \in P_{mn}$ be any partition.  Suppose that the skew shape $\lambda/\mu=v^r$ for some $\lambda=(\lambda_1, \dots, \lambda_m) \in P_{m,n+1}$ and $0\leq r \leq p$.  Denote by  $i_1< \cdots < i_{m-r}$ those parts such that $\lambda_{i_\ell} = \mu_{i_\ell}$, indexed from the top of the rectangle. 
Given any extension $v^r \rightarrow v^{p}$, consider an addable box $\alpha \in v^p\ba v^r$ in row $i_j$ of $\mu$ for some $j \in [m-r]$.  

Viewing $\lambda_\mu/\lambda_\mu$ as a vertical 0-strip $v^0$, then there is an addable box $\beta$ in row $j$ of $\lambda_\mu$, indexing rows from the top of the rectangle. As such, the given extension $v^r \rightarrow v^p$ corresponds uniquely to an extension $v^0 \rightarrow v^{p-r}$ of addable boxes in $\lambda_\mu$, identifying $\alpha \in v^p \ba v^r$ in row $i_j$ of $\mu$ with $\beta \in v^{p-r} \ba v^0$ in row $j$ of $\lambda_\mu$.
Moreover, under this correspondence,
\begin{equation}\label{eq:alphabetawt}
\wtv_{\mu}(\alpha) = \wtv_{\lambda_\mu}(\beta).
\end{equation}
\end{lemma}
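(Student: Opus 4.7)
The plan is to pair each addable box $\alpha$ in row $i_j$ of $\mu$ with its purported counterpart $\beta$ in row $j$ of $\lambda_\mu$, and then verify that the two ingredients $u(\cdot)$ and $r(\cdot) - b(\cdot)$ of Definition \ref{def:wtaddbox} agree on either side.

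First I would establish the bijection between the relevant extensions. Since $v^r$ already occupies the $r$ non-preserved rows of $\mu$, any addable box for the vertical strip extension must sit in a preserved row $i_j$, and to share a vertical edge with the boundary of $\lambda/\mu$ it must occupy the rightmost cell $(i_j, \mu_{i_j})$ of that row. The formula \eqref{eq:cutjoin} gives $(\lambda_\mu)_j = \mu_{i_j} + (r + j - i_j)$, where $r + j - i_j$ is precisely the number of non-preserved rows of $\mu$ strictly below $i_j$. Using that $\mu$ is weakly decreasing together with $\mu_{i_j} = \lambda_{i_j}$, one checks that $\mu_{i_j} = 0$ forces this correction to vanish, so $\mu_{i_j} > 0$ if and only if $(\lambda_\mu)_j > 0$. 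Hence the cell $(i_j, \mu_{i_j})$ exists in $\mu$ precisely when $(j, (\lambda_\mu)_j)$ exists in $\lambda_\mu$, and the order-preserving correspondence $i_\ell \leftrightarrow \ell$ matches extensions $v^r \rightarrow v^p$ with extensions $v^0 \rightarrow v^{p-r}$.

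Next I would compare up-steps using the direct observation that in any partition $\eta$ inside an $a \times b$ rectangle, the right vertical edge of the rightmost cell of row $i$ occurs at step $\eta_i + (a - i + 1)$ when edges are numbered from the southwest corner. Applying this to $\mu$ gives $u(\alpha) = \mu_{i_j} + (m - i_j + 1)$, and applying it to $\lambda_\mu$ gives $u(\beta) = (\lambda_\mu)_j + (m - r - j + 1)$; substituting the formula for $(\lambda_\mu)_j$ into the latter causes the corrections to cancel, leaving $u(\alpha) = u(\beta) = \mu_{i_j} + m - i_j + 1$.

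For the second statistic, I would note that $r(\alpha) = m - i_j + 1$ and $r(\beta) = (m - r) - j + 1$, while $b(\alpha)$ splits as the number of $v^r$-boxes in rows strictly below $i_j$ plus the number of chosen addable boxes strictly below $i_j$. The first quantity is $r + j - i_j$ by the same non-preserved-row count, and the second equals $b(\beta)$ under the bijection. Subtracting yields $r(\alpha) - b(\alpha) = (m - i_j + 1) - (r + j - i_j) - b(\beta) = r(\beta) - b(\beta)$, and then \eqref{eq:alphabetawt} follows from Definition \ref{def:wtaddbox}. The main obstacle I anticipate is keeping the two natural row-counts straight---from the top in the join-and-cut construction, and from the bottom in the statistic $r(\cdot)$---and isolating the combinatorial identity $r + j - i_j = |\{i : \lambda_i \neq \mu_i,\ i > i_j\}|$ cleanly; once this identity is in place, the rest reduces to a direct cancellation.
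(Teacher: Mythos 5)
Your proposal is correct and follows essentially the same route as the paper: establish the row correspondence $i_j \leftrightarrow j$ via the closed formula \eqref{eq:cutjoin}, compute $u(\alpha)=u(\beta)=\mu_{i_j}+m-i_j+1$, and show $r(\alpha)-b(\alpha)=r(\beta)-b(\beta)$ by accounting for the deleted rows. The only (cosmetic) difference is that you obtain the key count $r+j-i_j = \#\{i>i_j : \lambda_i\neq\mu_i\}$ by direct enumeration of the non-preserved rows, whereas the paper derives it indirectly by equating two expressions for $r(\beta)$.
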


\begin{proof}

We first make explicit the correspondence between extensions of vertical strips by addable boxes in $\mu$ and $\lambda_\mu$.
If for some $j \in [m-r]$, row $i_j$ of $\mu$ contains an addable box $\alpha$ in the extension $v^r \rightarrow v^p$, we first claim that row $j$ in $\lambda_\mu$ is also nonempty. Since the skew shape $\lambda/\mu$ supports an addable box in row $i_j$ counting from the top of the rectangle, then by definition $\lambda_{i_j} = \mu_{i_j} \neq 0$.   By \eqref{eq:cutjoin}, the number of boxes in row $j$ of $\lambda_\mu$, counting from the top of the rectangle, equals 
\begin{equation}\label{eq:lammuparts}
(\lambda_\mu)_j = \mu_{i_j} - i_j+r+j.
\end{equation}
The largest possible value for $i_j$ in the $m \times (n-m+1)$ rectangle occurs when the $m-r$ parts such that $\lambda_{i_j}  =\mu_{i_j}$ coincide with the  bottom $m-r$ rows of $\lambda/\mu$, which would then be indexed by
$i_1 = r+1 < \dots < i_{r-m} = m$. In this scenario, $i_j = r+j$, and so in general we necessarily have $i_j \leq r+j$.  Therefore, $-i_j +r+j \geq 0$, and since $\mu_{i_j} >0$, then \eqref{eq:lammuparts} implies that $(\lambda_\mu)_j >0$ as well.  In particular, whenever $\mu$ supports an addable box $\alpha$ in row $i_j$, then $\lambda_\mu$ supports a corresponding addable box $\beta$ in row $j$, indexing the rows from the top of the corresponding rectangles.  Identifying the addable box $\alpha \in \mu$ in row $i_j$ with the corresponding addable box $\beta \in \lambda_\mu$ in row $j$, the $p-r$ addable boxes in the extension $\lambda/\mu = v^r \rightarrow v^p$ correspond uniquely to an extension $v^0 \rightarrow v^{p-r}$ of addable boxes in $\lambda_\mu$.

We now proceed to calculate the weight of two corresponding addable boxes.  By definition, 
\begin{equation*}
\wtv_{\mu}(\alpha) =  t_{u(\alpha)}-t_{r(\alpha)-b(\alpha)} \quad \text{and} \quad \wtv_{\lambda_\mu}(\beta) =  t_{u(\beta)}-t_{r(\beta)-b(\beta)}.
\end{equation*}
First, recall that the row numbers $r(\alpha)$ and $r(\beta)$ are calculated from the bottom of their respective rectangles, which means that we can immediately calculate $r(\alpha) = m-i_j+1$ and $r(\beta) = m-r-j+1$.  

The index $u(\alpha)$ is the up-step for box $\alpha$, which can also be decomposed as the sum of the count of up-steps of $U(\mu)$ plus the count of side-steps of $S(\mu)$, up to and including the vertical edge of $\alpha$. Since $\alpha$ is in row $i_j$ of $\mu \in P_{mm}$ indexed from the top of the rectangle, the number of up-steps to this point is given by $m-i_j+1$. The number of side-steps to this point equals $\mu_{i_j}$, and so $u(\alpha) = \mu_{i_j}+m-i_j+1$.  Since $\beta$ is in row $j$ of $\lambda_\mu$ counted from the top of the rectangle, which only has $m-r$ rows, we instead have $u(\beta) = (\lambda_\mu)_j + m-r-j+1$.  Applying \eqref{eq:lammuparts}, we thus see that indeed
\[u(\beta) =\left(\mu_{i_j} - i_j+r+j\right) +(m-r-j+1)=u(\alpha).\]

We now compare the total number of addable and skew boxes below $\alpha$ and $\beta$, denoted $b(\alpha)$ and $b(\beta)$ respectively. First note that skew boxes do not occur in $\lambda_\mu$, since all rows in which $\lambda$ and $\mu$ have unequal parts are deleted in the join-and-cut procedure which produces $\lambda_\mu$. To carry out the calculation of $b(\beta)$, we define an intermediate statistic $n_k :=  \#\left\{i > k \mid \lambda_i \neq \mu_i \right\}$ which counts the number of rows below row $k$ in which $\lambda$ and $\mu$ have unequal parts, counting from the top of the rectangle.  Since exactly $r$ rows are deleted from $\mu$ in step (2) of Definition \ref{def:cutjoin}, and $n_{i_j}$ of these deleted rows lie below row $i_j$ in $\mu$, then $r-n_{i_j}$ rows are deleted from $\mu$ above row $i_j$.  Thus after the join-and-cut algorithm is carried out, the row number $r(\beta)$ for $\beta$ in $\lambda_\mu$ equals $m-i_j +1 - n_{i_j}$. Notice, though, that we already computed $r(\beta) = m-r-j+1$ above. Comparing these two calculations, we deduce that $n_{i_j}=-i_j+r+j$.

By construction, all of the boxes in the extension $v^r \rightarrow v^p$ that are below $\alpha$ remain below $\beta$ in the corresponding extension $v^0 \rightarrow v^{p-r}$.  Since the rows in which $\lambda_i \neq \mu_i$ are precisely those rows which are removed in the join-and-cut algorithm to produce $\lambda_\mu$, and those unequal rows below $i_j$ in the skew shape $\lambda/\mu$ each contribute one box to $b(\alpha)$, then by definition $b(\beta) = b(\alpha)-n_{i_j}$.  Using our formulas for $r(\alpha), r(\beta)$, and $n_{i_j}$ from above, we thus have
\begin{align*}
r(\beta) - b(\beta) & = \left( m-r-j+1 \right) - \left( b(\alpha)-n_{i_j} \right) \\
& = \left( m-r-j+1 \right) - b(\alpha)+ \left( -i_j+r+j \right) \\
& = \left( m-i_j+1 \right) - b(\alpha) \\
& = r(\alpha)-b(\alpha),
\end{align*}
as required to conclude our verification of \eqref{eq:alphabetawt}.
\end{proof}

We illustrate the correspondence defined by Lemma \ref{lem:HLdiagram} as follows.

\begin{example}\label{ex:wtnoskew}

Recall that for $\mu = (6,6,6,3,2,0,0) \subset \lambda = (7,6,6,4,2,1,0) \in P_{7,15}$, we have $\lambda/\mu = v^3$, and 
$\{i_1,i_2,i_3,i_4\} = \{2,3,5,7\}$ denote those parts in which $\lambda$ and $\mu$ agree.  Consider the extension $v^3 \rightarrow v^5$ given by the pair of addable boxes in rows $i_2=3$ and $i_3=5$, counting from the top of the rectangle, depicted on the left in the figure below.
\begin{figure}[h]

\[\begin{tikzpicture}[scale=.45]

\draw[-]

(8,8) -- (16,8) 
(8,7) -- (15,7) 
(8,6) -- (14,6)
(8,5) -- (14,5) 
(8,4) -- (12,4) 
(8,3) -- (10,3) 
(8,2) -- (9,2) 

(8,8) -- (8,1)
(9,8) -- (9,2)
(10,8) -- (10,3)
(11,8) -- (11,4)
(12,8) -- (12,4)
(13,8) -- (13,5)
(14,8) -- (14,5)
(15,8) -- (15,7)
;

\node[color=red] at (14.5,7.5) {$\sstar[2]$};
\node[color=red] at (11.5,4.5) {$\sstar[2]$};
\node[color=red] at (8.5,2.5) {$\sstar[2]$};

\node at (9.5,3.5) {$\green{\sbullet[2]}$};
\node at (13.5,5.5) {$\green{\sbullet[2]}$};

\draw[-]

(18,7) -- (29,7) 
(18,6) -- (26,6)
(18,5) -- (26,5) 
(18,4) -- (21,4)

(18,7) -- (18,3)
(19,7) -- (19,4)
(20,7) -- (20,4)
(21,7) -- (21,4)
(22,7) -- (22,5)
(23,7) -- (23,5)
(24,7) -- (24,5)
(25,7) -- (25,5)
(26,7) -- (26,5)
;

\node at (20.5,4.5) {$\green{\sbullet[2]}$};
\node at (25.5,5.5) {$\green{\sbullet[2]}$};

\end{tikzpicture}\]

\end{figure}

By Lemma \ref{lem:HLdiagram}, the left-hand configuration of addable boxes corresponds to the right-hand configuration of addable boxes in the partition $\lambda_\mu = (8,8,3,0) \in P_{4,15}$ obtained by the join-and-cut algorithm in Example \ref{ex:cutjoin}.  In $\lambda_\mu$, the corresponding addable boxes are instead in rows 2 and 3, counting from the top of the shorter rectangle. Moreover, recall from Example \ref{ex:thmex} that 
\[\wtv_{\mu}(\alpha_{i_2}) = t_{11} - t_{(5-3)} = t_{11}-t_2\quad \text{and} \quad \wtv_{\mu}(\alpha_{i_3}) = t_{5} - t_{(3-1)} = t_5-t_2,\]
where here we denote by $\alpha_{i_j}$ the addable box in row $i_j$ of $\mu$, counting from the top of the rectangle. Similarly, we may compute that
\begin{align*}
\wtv_{\lambda_\mu}(\beta_2) &= t_{11} - t_{(3 - 1)} =t_{11} - t_2 = \wtv_{\mu}(\alpha_{i_2}), \\
\wtv_{\lambda_\mu}(\beta_3) & =  t_{5} - t_{(2 - 0)} =t_5 - t_2 = \wtv_{\mu}(\alpha_{i_3}),
\end{align*}
illustrating Equation \eqref{eq:alphabetawt}.
\end{example}

In light of Lemma \ref{lem:HLdiagram}, we aim next to repackage the weights on skew shapes in Theorem \ref{thm:main} in terms of weights on a partition shape, for which we require some auxiliary notation.
Recall that $1 \leq p \leq m$ is a fixed integer in the context of Theorem \ref{thm:main}.  For any fixed integer $0 \leq r \leq p$, throughout the rest of the paper, we write 
\[m':=m-r \quad \text{and} \quad p':=p-r.\]  
Define an indexing set
\begin{equation*}
\mathcal A_{p'} := \{ \vec{\iota} \in [m-p+1]^{p'} \mid \iota_1 \leq \cdots \leq \iota_{p'} \}.
\end{equation*}
The value $m-p+1$ arises here as the largest attainable row number (counting from the bottom of the rectangle) indexing the bottom-most addable box for a partition in $P_{m',n}$.

\begin{remark}\label{rem:iotadelta}

Given any vector $\vec{\iota}=(\iota_1, \dots, \iota_{p'}) \in \mathcal A_{p'}$, define 
\[ \vec{\iota}_\delta := \vec{\iota} +(0,1,\dots, p'-1),\] 
which adds the reverse of the staircase $\delta^{p'}$ to $\vec{\iota}$. Equivalently, writing $\vec{\iota}_\delta = (\iota^\delta_1, \dots, \iota^\delta_{p'})$, we see that
\[\iota_j^\delta:=\iota_j+j-1\]
for any $j \in [p'].$ Since $\iota_j \in [m-p+1]$, the entries of $\vec{\iota}_\delta$ are $p'$ distinct integers from the set $[m']$.  Conversely, any increasing choice $\vec{\iota}_\delta$ of $p'$ distinct integers in $[m']$ uniquely corresponds to $\vec{\iota} = \vec{\iota}_\delta - (0,1,\dots, p'-1) \in \mathcal A_{p'}$.  The correspondence $\vec{\iota} \longleftrightarrow \vec{\iota}_\delta$ is clearly a bijection between $\mathcal A_{p'}$ and the set $\mathcal A_{p'}^\delta$ of $p'$ strictly increasing integers in $[m']$.
\end{remark}

In the remainder of this section, we also fix a diagram $\eta \in P_{m',n+1}$. Recall that $U(\eta)$ denotes the set of up-steps of $\eta$, and enumerate these up-steps here as $\{u_1, \dots, u_{m'}\}$, recorded from the bottom of the rectangle.  Given any $\vec{\iota}=(\iota_1, \dots, \iota_{p'}) \in \mathcal A_{p'}$ and any index $j \in [p']$, define the weight
\begin{equation}\label{eq:Psideg1}
\Psi_{\eta}(\vec{\iota},j) := t_{u_{\iota_j^\delta}}-t_{\iota_j}.
\end{equation}
The following example illustrates how to calculate these weights.

\begin{example}\label{ex:Psimonomial}

Recall $\lambda_\mu = (8,8,3,0) \in P_{4,15}$ from Example \ref{ex:cutjoin}, which originated from a pair of partitions $\lambda, \mu \in P_{7,15}$ such that the skew shape $\lambda/\mu = v^3$.  Recalling that $m=7$ and fixing $p=5$ as in Example \ref{ex:thmex}, we have $m'=7-3=4$ and $p' = 5-3=2$, and so $\mathcal{A}_2 = \{ \vec{\iota} \in [3]^2 \mid \iota_1 \leq \iota_2 \}$.

As an example, fix $\vec{\iota}=(2,2) \in \mathcal A_2$.  We demonstrate how to calculate the weights $\Psi_{\lambda_\mu}(\vec{\iota},1)$ and $\Psi_{\lambda_\mu}(\vec{\iota},2)$. First, the vector $\vec{\iota}_\delta = \vec{\iota} + (0,1) = (2,3) \in \mathcal A_{2}^\delta$ indexes two rows in the partition $\lambda_\mu$, counting from the bottom of the rectangle.
The up-steps of $\lambda_\mu$ are
\[U(\lambda_\mu) = \{1,5,11,12\},\]
and so the up-steps isolated by $\vec{\iota}_\delta=(2,3)$ are $u_2 = 5$ and $u_3 = 11$.  Therefore, by \eqref{eq:Psideg1} we have 
\[ \Psi_{\lambda_\mu}(\vec{\iota},1) = t_5-t_2 \quad \text{and} \quad \Psi_{\lambda_\mu}(\vec{\iota},2) = t_{11}-t_2. \]

To take this calculation one step further, the vector $\vec{\iota}_\delta = (2,3)$ indexes two rows of $\lambda_\mu$, each of which supports an addable box as follows:

\begin{center}

\begin{tikzpicture}[scale=.45]

\draw[-]

(18,7) -- (29,7) 
(18,6) -- (26,6)
(18,5) -- (26,5) 
(18,4) -- (21,4)

(18,7) -- (18,3)
(19,7) -- (19,4)
(20,7) -- (20,4)
(21,7) -- (21,4)
(22,7) -- (22,5)
(23,7) -- (23,5)
(24,7) -- (24,5)
(25,7) -- (25,5)
(26,7) -- (26,5)
;

\node[font=\scriptsize] at (20.5,4.5) {$\green{\sbullet[3]}$};
\node[font=\scriptsize] at (25.5,5.5) {$\green{\sbullet[3]}$};

\end{tikzpicture}
\end{center}

\noindent Denote here by $\gamma_j$ the addable box in row $j$ of $\lambda_\mu$, counting from the bottom of the rectangle.  Then, comparing the addable boxes $\beta_j$ from Example \ref{ex:wtnoskew} which are indexed from the top down, we have
\[\wtv_{\lambda_\mu} (\gamma_2) = \wtv_{\lambda_\mu}(\beta_3) = t_5-t_2= \Psi_{\lambda_\mu}(\vec{\iota},1) \quad \text{and} \quad \wtv_{\lambda_\mu} (\gamma_3) = \wtv_{\lambda_\mu}(\beta_2) =t_{11}-t_2 =\Psi_{\lambda_\mu}(\vec{\iota},2).\]
We formalize this additional observation in the next lemma.
\end{example}

As illustrated by Example \ref{ex:Psimonomial}, the weights $\Psi_{\eta}(\vec{\iota},j)$ encode the weights of those addable boxes occurring in Lemma \ref{lem:HLdiagram} precisely as follows.

\begin{lemma}\label{lem:wtnoskew}
Let $\eta \in P_{m',n+1}$, and suppose that $\vec{\iota}_\delta = (\iota_1^\delta, \dots, \iota_{p'}^\delta) \in \mathcal A_{p'}^\delta$ indexes the rows of $\eta$ which contain addable boxes in an extension $\eta=v^0 \rightarrow v^{p'}$, counting rows from the bottom of the rectangle.   Then for $\vec{\iota} = \vec{\iota}_\delta - (0,1,\dots, p'-1) \in \mathcal A_{p'}$ and any $j \in [p']$,  
\begin{equation*}
\wtv_\eta(\beta_{\iota_j^\delta}) = \Psi_{\eta}(\vec{\iota},j),
\end{equation*}
where $\beta_{\iota_j^\delta}$ is the addable box in row $\iota_j^\delta$ of $\eta$, counting rows from the bottom of the rectangle. 
\end{lemma}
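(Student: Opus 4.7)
The plan is to expand $\wtv_\eta(\beta_{\iota_j^\delta})$ via Definition \ref{def:wtaddbox} and $\Psi_\eta(\vec\iota,j)$ via \eqref{eq:Psideg1}, and then verify the two summands on each side agree: namely that $u(\beta_{\iota_j^\delta}) = u_{\iota_j^\delta}$ and that $r(\beta_{\iota_j^\delta}) - b(\beta_{\iota_j^\delta}) = \iota_j$. Because the underlying skew shape is $\eta/\eta = v^0$ (with no skew boxes at all), the bookkeeping here is considerably simpler than in the proof of Lemma \ref{lem:HLdiagram}; in particular, the number-of-boxes-below statistic reduces to counting how many entries of $\vec\iota_\delta$ are strictly less than $\iota_j^\delta$.

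For the up-step identity, I would observe that when the boundary of $\eta$ is traced from the lower-left corner of the $m' \times (n+1-m')$ rectangle, each row of the rectangle contributes exactly one vertical edge, namely the segment of the boundary that rises from the bottom to the top of that row. In the enumeration of $U(\eta)$ from the bottom of the rectangle, the $k$-th such vertical edge is by definition $u_k$. When an addable box $\beta$ is placed at the end of row $k$ counting from the bottom, its left vertical edge coincides with precisely this segment, so $u(\beta) = u_k$. Taking $k = \iota_j^\delta$ yields $u(\beta_{\iota_j^\delta}) = u_{\iota_j^\delta}$. Likewise, the definition of $r(\alpha)$ in Definition \ref{def:addboxstats} gives immediately $r(\beta_{\iota_j^\delta}) = \iota_j^\delta$.

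It remains to compute $b(\beta_{\iota_j^\delta})$. Since $\eta$ viewed as $v^0$ contributes no skew boxes, every box of the extension $v^0 \to v^{p'}$ is an addable box, and these addable boxes sit in the rows indexed by $\vec\iota_\delta$, one per row. The rows strictly below row $\iota_j^\delta$ containing an addable box are therefore those indexed by $\iota_i^\delta$ with $\iota_i^\delta < \iota_j^\delta$; by strict monotonicity of $\vec\iota_\delta$, there are exactly $j-1$ of these. Combined with the identity $\iota_j^\delta = \iota_j + (j-1)$ from Remark \ref{rem:iotadelta}, this gives
\[
r(\beta_{\iota_j^\delta}) - b(\beta_{\iota_j^\delta}) = \iota_j^\delta - (j-1) = \iota_j,
\]
and hence $\wtv_\eta(\beta_{\iota_j^\delta}) = t_{u_{\iota_j^\delta}} - t_{\iota_j} = \Psi_\eta(\vec\iota,j)$, as claimed. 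The only real point requiring care is keeping the two row-indexing conventions (from the top for the parts of $\eta$, from the bottom for the up-steps and for $r(\cdot)$) consistently aligned; no genuine obstacle is anticipated, since the vanishing of the skew contribution streamlines the calculation relative to Lemma \ref{lem:HLdiagram}.
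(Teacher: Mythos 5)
Your proposal is correct and follows essentially the same route as the paper's proof: identify $r(\beta)=\iota_j^\delta$, $u(\beta)=u_{\iota_j^\delta}$, and $b(\beta)=j-1$ (the latter by counting the $j-1$ smaller entries of $\vec{\iota}_\delta$), then combine with $\iota_j^\delta=\iota_j+j-1$. One cosmetic slip: the vertical edge of the addable box lying on the boundary of $\eta$ is its \emph{right} edge rather than its left, but this does not affect the identity $u(\beta_{\iota_j^\delta})=u_{\iota_j^\delta}$ or anything downstream.
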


\begin{proof}
For ease of notation, throughout this proof we denote by $\beta := \beta_{\iota_j^\delta}$. Since $\beta$ is in row $\iota_j^\delta$ counting from the bottom of the rectangle, we immediately have $r(\beta) = \iota_j^\delta$. Having also enumerated $U(\eta) = \{u_1,\dots,u_{m^\prime}\}$ from the bottom of the rectangle, we automatically have $u(\beta) = u_{\iota^{\delta}_{j}}$.  
By hypothesis, the entries of $\vec{\iota}_\delta = (\iota_1^\delta, \dots, \iota_{p'}^\delta)$ index the rows in $\eta$ which contain addable boxes in an extension $v^0 \rightarrow v^{p'}$, counting from the bottom of the rectangle, and so $b(\beta) = j-1$.  Therefore, 
\[r(\beta)-b(\beta) = \iota_j^\delta - (j-1) = (\iota_j+j-1) -(j-1)= \iota_j,\]
and so  $\wtv_\eta(\beta)= t_{u(\beta)}-t_{r(\beta)-b(\beta)} = t_{u_{\iota_j^\delta}}-t_{\iota_j} =  \Psi_{\eta}(\vec{\iota},j),$
as claimed.
\end{proof}

Ultimately, the equivariant Pieri rule on cylindric shapes requires consideration of all possible extensions $v^r \rightarrow v^p$. We thus define the following sum
\begin{equation}\label{eq:Psiformula}
\Psi(\eta, p') := \sum\limits_{\vec{\iota} \in \mathcal{A}_{p'}}\prod\limits_{j \in [p']}\Psi_{\eta}(\vec{\iota},j),
\end{equation}
which yields the equivariant Littlewood-Richardson coefficients in Theorem \ref{thm:main} as follows.

\begin{proposition}\label{prop:PsiAB}

Fix an integer $1 \leq p \leq m$ and a partition $\mu \in P_{mn}$.  For any integer $0 \leq r \leq p$ and any partition $\lambda \in P_{m,n+1}$ such that the skew shape $\lambda/\mu = v^r$, we have
\begin{equation}\label{eq:PsiAB}
\Psi( \lambda_\mu,p') = \sum_{v^r \rightarrow v^p}\prod_{\alpha \in v^p \ba v^r}\wtv_{\mu}(\alpha).
\end{equation}
\end{proposition}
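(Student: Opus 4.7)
The plan is to combine Lemmas \ref{lem:HLdiagram} and \ref{lem:wtnoskew}, then handle the ``phantom'' terms in $\Psi(\lambda_\mu, p')$ that do not correspond to any actual extension.

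First I would invoke Lemma \ref{lem:HLdiagram} to rewrite the right-hand side of \eqref{eq:PsiAB} as a sum over extensions $v^0 \rightarrow v^{p'}$ in $\lambda_\mu$ of the products $\prod_\beta \wtv_{\lambda_\mu}(\beta)$, using the weight-preserving bijection it establishes between extensions $v^r \rightarrow v^p$ in $\mu$ and extensions $v^0 \rightarrow v^{p'}$ in $\lambda_\mu$. Such extensions are precisely the choices of $p'$ non-empty rows of $\lambda_\mu$, each contributing its rightmost box as an addable box. By Remark \ref{rem:iotadelta}, choosing $p'$ rows of $\lambda_\mu$ (counted from the bottom of the rectangle) corresponds to specifying $\vec{\iota}_\delta \in \mathcal{A}_{p'}^\delta$, equivalently $\vec{\iota} \in \mathcal{A}_{p'}$. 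For each $\vec{\iota}$ whose $\vec{\iota}_\delta$ indexes only non-empty rows, Lemma \ref{lem:wtnoskew} identifies $\prod_{j \in [p']} \Psi_{\lambda_\mu}(\vec{\iota},j)$ with the corresponding product of addable-box weights.

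The remaining subtlety is that $\Psi(\lambda_\mu,p')$ sums over all of $\mathcal{A}_{p'}$, including those $\vec{\iota}$ whose $\vec{\iota}_\delta$ picks up an empty bottom row of $\lambda_\mu$ and hence does not describe any valid extension. The key observation resolving this is that if the bottom $k$ rows of $\lambda_\mu$ are empty, then the boundary path of $\lambda_\mu$ traces $k$ consecutive initial up-steps along the left edge of the $m'\times(n+1-m')$ rectangle, giving $u_\ell = \ell$ for all $\ell \in [k]$. Suppose $\vec{\iota}_\delta$ has some index $\iota_j^\delta \leq k$. By the strictly increasing property, $\iota_1^\delta \leq k$ as well, and since $\iota_1 = \iota_1^\delta$ by Remark \ref{rem:iotadelta}, the first factor becomes
\[
\Psi_{\lambda_\mu}(\vec{\iota},1) = t_{u_{\iota_1^\delta}} - t_{\iota_1} = t_{\iota_1^\delta} - t_{\iota_1^\delta} = 0,
\]
which annihilates the entire product. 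Thus only the $\vec{\iota}$ corresponding to valid extensions contribute to $\Psi(\lambda_\mu,p')$.

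Combining these observations, $\Psi(\lambda_\mu,p')$ equals the sum over valid extensions $v^0 \rightarrow v^{p'}$ in $\lambda_\mu$ of $\prod_\beta \wtv_{\lambda_\mu}(\beta)$, which by Lemma \ref{lem:HLdiagram} equals the right-hand side of \eqref{eq:PsiAB}. I expect the main obstacle to be exactly the phantom-terms argument above: although short, it depends crucially on the precise identification $\iota_1 = \iota_1^\delta$ from Remark \ref{rem:iotadelta} together with the recognition that consecutive empty bottom rows of $\lambda_\mu$ produce consecutively labeled initial up-steps at the bottom of the ambient rectangle, rather than being distributed throughout $U(\lambda_\mu)$.
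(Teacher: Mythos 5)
Your proposal is correct and follows essentially the same route as the paper's own proof: reduce to extensions $v^0 \rightarrow v^{p'}$ in $\lambda_\mu$ via Lemma \ref{lem:HLdiagram}, convert weights via Lemma \ref{lem:wtnoskew} and the bijection of Remark \ref{rem:iotadelta}, and then observe that the terms indexed by $\vec{\iota}$ whose bottom row of $\lambda_\mu$ is empty vanish because $u_{\iota_1} = \iota_1$ forces $\Psi_{\lambda_\mu}(\vec{\iota},1)=0$. This vanishing argument is exactly the paper's Equation \eqref{eq:Psizero}, so there is nothing to add.
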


\begin{proof}
By Lemma \ref{lem:HLdiagram}, any extension $v^r \rightarrow v^p$ in $\mu$ corresponds to a unique extension $v^0 \rightarrow v^{p'}$ in $\lambda_\mu$, and the weights of corresponding addable boxes are equal.  Therefore, by Lemma \ref{lem:HLdiagram}, we have
\begin{equation}\label{eq:PsiAB1}
\sum_{v^r \rightarrow v^p}\prod_{\alpha \in v^p \ba v^r}\wtv_{\mu}(\alpha)  = \sum_{v^0 \rightarrow v^{p'}}\prod_{\beta \in v^{p'}\ba  v^0}\wtv_{\lambda_\mu}(\beta).
\end{equation}

We proceed to rephrase the right-hand expression in \eqref{eq:PsiAB1} in terms of the weights occurring in Lemma \ref{lem:wtnoskew}. 
By definition, any extension $v^0\rightarrow v^{p'}$ consists of $p'$ addable boxes in distinct, nonzero rows of $\lambda_\mu \in P_{m',n+1}$.  Recording these row numbers in increasing order, counted from the bottom of the rectangle, gives an element $\vec{\iota}_\delta \in \mathcal A_{p'}^\delta$ by Remark \ref{rem:iotadelta}.  Identifying the extension $v^0 \rightarrow v^{p'}$ with the vector $\vec{\iota}_\delta$, since row $\iota_j^\delta$ of $\lambda_\mu$ supports an addable box, note that $(\lambda_\mu)_{\iota_j^\delta} \neq 0$ for all $j \in [p']$.  

Note, that Lemma \ref{lem:wtnoskew} only applies to those rows of $\lambda_\mu$ which are nonzero, and so our first reformulation of \eqref{eq:PsiAB1} will reflect this constraint. Recalling that the number of boxes in each row weakly increases going upward from the bottom of the rectangle,  since the bottom-most row in the extension $v^0 \rightarrow v^{p'}$ is nonzero, so are all the rows above it.  Rephrasing this observation in terms of the parts of $\lambda_\mu$, we have $(\lambda_\mu)_{\iota_j^\delta} \neq 0$ for all $j \in [p']$ if and only if $(\lambda_\mu)_{\iota_1^\delta} \neq 0$.  In addition, since $\iota_j^\delta = \iota_j+j-1$, then for $j=1$ we simply have $\iota_1^\delta = \iota_1$. Lemma \ref{lem:wtnoskew} then permits us to convert the weights $\wtv_{\lambda_\mu}(\beta)$ into polynomials of the form $\Psi_{\lambda_\mu}(\vec{\iota},j)$, for those $\vec{\iota}$ such that $\vec{\iota}_\delta$ index nonzero rows of $\lambda_\mu$, as follows
\begin{equation}\label{eq:PsiAB2}
\sum_{v^0 \rightarrow v^{p'}}\prod_{\beta \in v^{p'} \ba v^0}\wtv_{\lambda_\mu}(\beta) = \sum\limits_{\substack{\vec{\iota} \in \mathcal{A}_{p'} \\ \left( \lambda_\mu \right)_{\iota_1} \neq 0}}\prod\limits_{j \in [p']}\Psi_{\lambda_\mu}(\vec{\iota},j).
\end{equation}

We now consider those $\vec{\iota} \in \mathcal A_{p'}$ such that $(\lambda_\mu)_{\iota_1}=0$, which are currrently omitted from the sum in the right-hand expression in \eqref{eq:PsiAB2}.  In particular, we claim that each such $\vec{\iota}$ contributes zero to this sum.  To see this, recall that $\Psi_{\lambda_\mu}(\vec{\iota},1) = t_{u_{\iota_1}}-t_{\iota_1}$. Since the up-steps $U(\lambda_\mu)$ are indexed from the bottom of the rectangle, and since the number of parts weakly increases going upward while $(\lambda_\mu)_{\iota_1}=0$, then the first up-steps $u_1, \dots, u_{\iota_1}$ all satisfy $u_k=k$.  In particular, $u_{\iota_1} = \iota_1$, and so $\Psi_{\lambda_\mu}(\vec{\iota},1) = t_{u_{\iota_1}}-t_{\iota_1} = t_{\iota_1}-t_{\iota_1} = 0$.  Since we take the product over all $j \in [p']$, we have
\begin{equation}\label{eq:Psizero}
\sum\limits_{\substack{\vec{\iota} \in \mathcal{A}_{p'} \\ \left( \lambda_\mu \right)_{\iota_1} = 0}}\prod\limits_{j \in [p']}\Psi_{\lambda_\mu}(\vec{\iota},j) =0.
\end{equation}

We can thus add these terms back into the sum in \eqref{eq:PsiAB2} without affecting the value:
\begin{align}
 \sum\limits_{\substack{\vec{\iota} \in \mathcal{A}_{p'} \\ \left( \lambda_\mu \right)_{\iota_1} \neq 0}}\prod\limits_{j \in [p']}\Psi_{\lambda_\mu}(\vec{\iota},j) &=  \sum\limits_{\substack{\vec{\iota} \in \mathcal{A}_{p'} \\ \left( \lambda_\mu \right)_{\iota_1} \neq 0}}\prod\limits_{j \in [p']}\Psi_{\lambda_\mu}(\vec{\iota},j)+ \sum\limits_{\substack{\vec{\iota} \in \mathcal{A}_{p'} \\ \left( \lambda_\mu \right)_{\iota_1} = 0}}\prod\limits_{j \in [p']}\Psi_{\lambda_\mu}(\vec{\iota},j) \nonumber \\
 & = \sum\limits_{\vec{\iota} \in \mathcal{A}_{p'}}\prod\limits_{j \in [p']}\Psi_{\lambda_\mu}(\vec{\iota},j) \nonumber \\
 & = \Psi(\lambda_\mu, p'). \label{eq:PsiAB3}
\end{align}
Equation \eqref{eq:PsiAB} now follows by combining \eqref{eq:PsiAB1},  \eqref{eq:PsiAB2}, and  \eqref{eq:PsiAB3}.
\end{proof}

We demonstrate Proposition \ref{prop:PsiAB} in the following example.

\begin{example}\label{ex:Psiproduct}
Fix $p=5$, and $\mu = (6,6,6,3,2,0,0) \in P_{7,15}$.  Then $\lambda = (7,6,6,4,2,1,0) \in P_{7,15}$ satisfies $\lambda/ \mu = v^3$.  Recall from Example \ref{ex:thmex} that 
\[  \sum_{v^r \rightarrow v^p}\prod_{\alpha \in v^p \ba v^r}\wtv_{\mu}(\alpha) =(t_{12}-t_3)(t_{11}-t_3) + (t_{12}-t_3)(t_{5}-t_2) + (t_{11}-t_2)(t_{5}-t_2), \]
by calculating the weights of the three possible extensions $v^3 \rightarrow v^5$ depicted in Figure \ref{fig:vstrip}.  

On the other hand, we may apply Proposition \ref{prop:PsiAB} to calculate the same polynomial.  Recall from Example \ref{ex:Psimonomial} that $p'=5-3=2$ and
\begin{equation*}
 \mathcal{A}_2 = \{ \vec{\iota} \in [3]^2 \mid \iota_1 \leq \iota_2 \} =  \{(1,1),(1,2),(1,3),(2,2),(2,3),(3,3)\}.
\end{equation*}
By Example \ref{ex:cutjoin}, the partition $\lambda_\mu = (8,8,3,0) \in P_{4,15}$, which has up-steps $U(\lambda_\mu) = \{1,5,11,12\}$. We shall compute
\[\Psi(\lambda_\mu,2)= \sum\limits_{\vec{\iota} \in \mathcal{A}_{2}}\Psi_{\lambda_\mu}(\vec{\iota},1)\Psi_{\lambda_\mu}(\vec{\iota},2).\]
First, consider any of the vectors of the form $\vec{\iota} = (1,k) \in \mathcal A_2$.  In each case, the weight $\Psi_{\lambda_\mu}(\vec{\iota},1)=t_1-t_1=0$, and so the three vectors $(1,1),(1,2),(1,3) \in \mathcal A_2$ contribute zero to the sum $\Psi({\lambda_\mu},2)$, illustrating Equation \eqref{eq:Psizero} from the proof of Proposition \ref{prop:PsiAB}.

Each of the remaining three vectors in $\mathcal A_2$ contributes a nonzero term. Recall from Example \ref{ex:Psimonomial} that we calculated the weights $\Psi_{\lambda_\mu}(\vec{\iota},j)$ for $\vec{\iota}=(2,2)$ and $j \in [2]$.  In particular, we see that
\[\Psi_{\lambda_\mu}((2,2),1)\Psi_{\lambda_\mu}((2,2),2) = (t_5 - t_2)(t_{11} - t_2).\] 
Similarly, applying Equation \eqref{eq:Psideg1} directly, we have
\begin{align*}
\Psi_{\lambda_\mu}((2,3),1)\Psi_{\lambda_\mu}((2,3),2) & = (t_5 - t_2)(t_{12} - t_3), \\
\Psi_{\lambda_\mu}((3,3),1)\Psi_{\lambda_\mu}((3,3),2) & = (t_{11} - t_3)(t_{12} - t_3).
\end{align*}
For reference, the configurations of addable boxes corresponding to the vectors $(2,2),(2,3),(3,3) \in \mathcal A_2$ via Lemma \ref{lem:wtnoskew} are recorded left to right, respectively, in the following figure:

\begin{center}
\begin{tikzpicture}[scale=.45]

\draw[-]

(8,7) -- (19,7) 
(8,6) -- (16,6)
(8,5) -- (16,5) 
(8,4) -- (11,4) 

(8,7) -- (8,3)
(9,7) -- (9,4)
(10,7) -- (10,4)
(11,7) -- (11,4)
(12,7) -- (12,5)
(13,7) -- (13,5)
(14,7) -- (14,5)
(15,7) -- (15,5)
(16,7) -- (16,5)

(21,7) -- (32,7) 
(21,6) -- (29,6)
(21,5) -- (29,5) 
(21,4) -- (24,4) 

(21,7) -- (21,3)
(22,7) -- (22,4)
(23,7) -- (23,4)
(24,7) -- (24,4)
(25,7) -- (25,5)
(26,7) -- (26,5)
(27,7) -- (27,5)
(28,7) -- (28,5)
(29,7) -- (29,5)

(34,7) -- (45,7) 
(34,6) -- (42,6)
(34,5) -- (42,5) 
(34,4) -- (37,4) 

(34,7) -- (34,3)
(35,7) -- (35,4)
(36,7) -- (36,4)
(37,7) -- (37,4)
(38,7) -- (38,5)
(39,7) -- (39,5)
(40,7) -- (40,5)
(41,7) -- (41,5)
(42,7) -- (42,5)
;

\node at (10.5,4.5) {$\green{\sbullet[2]}$};
\node at (15.5,5.5) {$\green{\sbullet[2]}$};

\node at (23.5,4.5) {$\green{\sbullet[2]}$};
\node at (28.5,6.5) {$\green{\sbullet[2]}$};

\node at (41.5,5.5) {$\green{\sbullet[2]}$};
\node at (41.5,6.5) {$\green{\sbullet[2]}$};

\end{tikzpicture}
\end{center}

Taking the sum, we obtain
\begin{equation}\label{eq:Psifullex}
\Psi(\lambda_\mu,2) = (t_5 - t_2)(t_{11} - t_2)+  (t_5 - t_2)(t_{12} - t_3)+ (t_{11} - t_3)(t_{12} - t_3),
\end{equation}
which is  the same polynomial reviewed above from Example \ref{ex:thmex}, after reversing the order of the summands and multiplicands.
\end{example}

As we shall see in the next section, the polynomial $\Psi(\lambda_\mu,p')$ is the one which most naturally relates to the localizations appearing in Theorem \ref{T:HuangLi} of Huang and Li.  In particular, to prove Theorem \ref{thm:main}, we shall equate the polynomials occurring in Proposition \ref{prop:PsiAB} with Huang-Li's localization formula for equivariant Littlewood-Richardson coefficients from Theorem \ref{T:HuangLi} via the following key proposition.

\begin{proposition}\label{prop:PsiHL}
Fix an integer $1 \leq p \leq m$ and a partition $\mu \in P_{mn}$.  For any integer $0 \leq r \leq p$ and any partition $\lambda \in P_{m,n+1}$ such that the skew shape $\lambda/\mu = v^r$, we have
\begin{equation*}
 \xi^{(1)^{p'}}_{m'}(\lambda_\mu) =  \Psi(\lambda_{\mu},p').
\end{equation*}
\end{proposition}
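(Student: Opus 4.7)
The plan is to prove Proposition \ref{prop:PsiHL} by invoking the localization-as-factorial-Schur formula (Theorem \ref{thm:facSchur}, stated later in Section \ref{sec:FacSchurProof}) and then directly matching the resulting polynomial to $\Psi(\lambda_\mu, p')$ via the bijection of Remark \ref{rem:iotadelta}.

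First, I would recall that Theorem \ref{thm:facSchur} expresses $\xi^{\gamma}_{m'}(\eta)$ as a factorial Schur polynomial in the variables $t_{u_1}, \ldots, t_{u_{m'}}$, where $u_1, \ldots, u_{m'}$ are the up-steps of $\eta \in P_{m', n+1}$ indexed from the bottom of the ambient rectangle. Specializing to $\gamma = (1)^{p'}$, the column-shape factorial Schur polynomial is precisely the $p'$-th elementary factorial symmetric polynomial, yielding
\begin{equation*}
\xi^{(1)^{p'}}_{m'}(\lambda_\mu) \;=\; e_{p'}\bigl(t_{u_1}, \ldots, t_{u_{m'}} \,\big|\, t\bigr).
\end{equation*}

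Next, I would invoke the standard combinatorial expansion of the factorial elementary symmetric polynomial,
\begin{equation*}
e_{p'}(x_1, \ldots, x_{m'} \mid t) \;=\; \sum_{1 \le i_1 < i_2 < \cdots < i_{p'} \le m'} \; \prod_{j=1}^{p'} \bigl( x_{i_j} - t_{i_j - j + 1} \bigr),
\end{equation*}
which follows directly from the tableau formula applied to the column shape $(1)^{p'}$ (or equivalently by induction on $p'$ using the generating-function definition). Substituting $x_{i_j} = t_{u_{i_j}}$ gives an expansion indexed by strictly increasing sequences $i_1 < \cdots < i_{p'}$ in $[m']$, which are exactly the elements of $\mathcal{A}_{p'}^\delta$.

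Then I would apply the bijection of Remark \ref{rem:iotadelta}. Writing $\vec{\iota} = \vec{\iota}_\delta - (0, 1, \ldots, p'-1) \in \mathcal{A}_{p'}$, so that $\iota_j = i_j - j + 1$ and $\iota_j^\delta = i_j$, the substitution transforms the sum into
\begin{equation*}
\sum_{\vec{\iota} \in \mathcal{A}_{p'}} \; \prod_{j=1}^{p'} \bigl( t_{u_{\iota_j^\delta}} - t_{\iota_j} \bigr) \;=\; \sum_{\vec{\iota} \in \mathcal{A}_{p'}} \; \prod_{j \in [p']} \Psi_{\lambda_\mu}(\vec{\iota}, j) \;=\; \Psi(\lambda_\mu, p'),
\end{equation*}
using the definitions \eqref{eq:Psideg1} and \eqref{eq:Psiformula}. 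Comparing with the first display then yields $\xi^{(1)^{p'}}_{m'}(\lambda_\mu) = \Psi(\lambda_\mu, p')$.

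The main obstacle will be reconciling sign and indexing conventions between Theorem \ref{thm:facSchur} and Definition \ref{def:wtaddbox}, since this paper's Graham-positivity convention differs by a sign from much of the literature (see the remark after Theorem \ref{thm:eqrimhook}). Concretely, one must verify that the specialization variables appearing in the factorial Schur polynomial are indeed indexed by the up-steps of $\lambda_\mu$ counted from the bottom (rather than the top), that the shift parameter $t_{i_j - j + 1}$ in the combinatorial formula for $e_{p'}(x \mid t)$ matches the subtraction $t_{\iota_j}$ in $\Psi_{\lambda_\mu}(\vec{\iota}, j)$, and that no spurious sign arises when these are combined. Once this bookkeeping is completed, the identity collapses to the bijective correspondence described above.
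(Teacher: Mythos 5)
Your main line of argument is correct and is essentially the paper's: the combinatorial matching of the specialized factorial elementary symmetric polynomial with $\Psi(\lambda_\mu,p')$ via the bijection $\vec{\iota} \leftrightarrow \vec{\iota}_\delta$ of Remark \ref{rem:iotadelta} is precisely the content of Lemma \ref{lem:Psielem}, and combining it with Theorem \ref{thm:facSchur} is exactly how the paper concludes. Your bookkeeping concerns (up-steps indexed from the bottom, the shift $t_{i_j-j+1}$ versus $t_{\iota_j}$, signs) are the right things to check, and they do work out as you describe.

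However, there is a genuine gap: Theorem \ref{thm:facSchur} is stated only for partitions $\gamma \subseteq \eta$, so your appeal to it silently assumes $(1)^{p'} \subseteq \lambda_\mu$, i.e.\ that $\lambda_\mu$ has at least $p'$ nonzero parts. This hypothesis can fail --- for instance, when $\mu$ is small enough that the join-and-cut output $\lambda_\mu \in P_{m',n+1}$ has fewer than $p'$ nonzero rows (in the extreme case $\mu = \emptyset$ and $\lambda = (1)^r$ one gets $\lambda_\mu = 0$). In that situation the identity you want still holds, but it requires a separate argument: by Lemma \ref{lem:xisupport} one has $\xi^{(1)^{p'}}_{m'}(\lambda_\mu) = 0$, and on the other side one must check directly that $\Psi(\lambda_\mu,p')=0$, which follows because $(\lambda_\mu)_{\iota_1}=0$ forces $u_{\iota_1}=\iota_1$ and hence kills the factor $\Psi_{\lambda_\mu}(\vec{\iota},1) = t_{u_{\iota_1}}-t_{\iota_1}$ in every summand (this is the vanishing recorded in \eqref{eq:Psizero}). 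Note that your unconditional expansion of $e_{p'}(x_{\pi_{\lambda_\mu}}|t)$ is still valid in this degenerate case --- it is only the identification of that polynomial with the localization $\xi^{(1)^{p'}}_{m'}(\lambda_\mu)$ that needs the extra case analysis, since the theorem you are citing does not cover $\gamma \not\subseteq \eta$.
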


Combining Proposition \ref{prop:PsiAB} with Proposition \ref{prop:PsiHL} proves the equality of all classical Littlewood-Richardson coefficients in Theorems \ref{thm:main} and \ref{T:HuangLi}. Since the quantum terms are easily obtained from the classical ones by applying the equivariant rim hook rule from \cite{BMT}, the proof of Proposition \ref{prop:PsiHL} will be the focus of the rest of the paper.


\section{Specializations of Factorial Schur Polynomials}\label{sec:FacSchurProof}

The proof of Proposition \ref{prop:PsiHL} is the primary goal of this final section of the paper. This proof proceeds by realizing in Section \ref{sec:spec} the localizations $\xi^\gamma(\eta)$ as specializations of factorial Schur polynomials, whose definition we review in Section \ref{sec:facSchurs}. The proof of Theorem \ref{thm:main} then immediately follows  in Section \ref{sec:proof}.

\subsection{Factorial Schur polynomials}\label{sec:facSchurs}

A \emph{semi-standard Young tableaux (SSYT) of shape $\lambda \in P_{mn}$} is a filling of each box of the Young diagram for $\lambda$ with a single number from the set $[m]$ such that numbers are weakly increasing across rows (reading from left to right) and strictly increasing down columns (reading from top to bottom).  Denote by $SSYT_\lambda$ the set of all SSYT of shape $\lambda$.  The \emph{Schur polynomial for $\lambda$} in the variables $(x)=x_1, \dots, x_m$ is then defined to be
\begin{equation}\label{eq:Schur}
s_\lambda(x) = \sum_{T_\lambda \in SSYT_\lambda} \prod_{\beta \in T_\lambda} x_{T_\lambda(\beta)},
\end{equation}
where $T_\lambda(\beta)$ denotes the number filling box $\beta$ in tableaux $T_\lambda$.
The elementary symmetric polynomial $e_i(x)$ is the Schur polynomial $s_{(1)^i}(x)$, and the homogeneous symmetric polynomial $h_i(x)$ equals $s_{(i)}(x)$.  The classical cohomology $H^*(Gr(m,n))$ has a well-known presentation as a polynomial ring generated by the elementary symmetric polynomials $e_1(x), \dots, e_m(x)$, in which the Schubert class $\sigma_\lambda$ is identified with the Schur polynomial $s_\lambda(x)$.

There is a corresponding presentation for $H^*_{T}(Gr(m,n))$ in terms of the equivariant analogs of the elementary and homogeneous symmetric polynomials; namely the \emph{factorial elementary symmetric polynomials} $e_i(x|t)$ and the \emph{factorial homogeneous symmetric polynomials} $h_i(x|t)$ in the variables $(x)$ and $(t)=t_1, \dots, t_n$.
 If we denote by $\Lambda := \Z[t_1, \dots, t_n]$, then the $T$-equivariant cohomology ring $H^*_{T}(Gr(m,n))$ has the following presentation 
\begin{equation*}
H^*_{T}(Gr(m,n)) \cong \frac{\Lambda[e_1(x|t), \dots, e_m(x|t)]}{\langle h_{n-m+1}(x|t), \dots, h_n(x|t)\rangle};
\end{equation*}
see \cite[Corollary 5.1]{MihalceaTrans}.
Moreover, under this isomorphism, the equivariant Schubert class $\sigma_\lambda$ is represented by the equivariant generalization of the Schur polynomial $s_\lambda(x)$, given by the associated factorial Schur polynomial; see  \cite[Proposition 5.2]{MihalceaTrans}.
For our purposes, we define the \emph{factorial Schur polynomial $s_\lambda(x|t)$} analogously to \eqref{eq:Schur} by
\begin{equation}\label{eq:facSchur}
s_\lambda(x|t) = \sum_{T_\lambda \in SSYT_\lambda} \prod_{\beta \in T_\lambda} x_{T_\lambda(\beta)}-t_{T_\lambda(\beta)+j-i},
\end{equation}
where the box $\beta \in T_\lambda$ is located in row $i$ and column $j$ of the tableaux, indexing rows from top to bottom and columns from left to right.  As in the classical case, we have $e_i(x|t) = s_{(1)^i}(x|t)$ and $h_i(x|t) = s_{(i)}(x|t)$.

\subsection{Specializations of factorial Schur polynomials}\label{sec:spec}

The localization $\xi^\gamma(\eta)$ of the equivariant Schubert class $\sigma_\gamma$ at the $T$-fixed point $\eta$ coincides with a certain specialization of the factorial Schur polynomial $s_\gamma(x|t)$.  We follow the treatment in \cite{IkedaNaruse}, though they credit Theorem \ref{thm:facSchur} below to earlier work of Knutson and Tao \cite{KT} and Lakshmibai, Raghavan, and Sankaran \cite{LRS}.

Any partition $\eta \in P_{mn}$ can be associated to a permutation in $S_n$, as we now review.  The \emph{window} notation for a permutation $\pi \in S_n$ records the action of $\pi$ on the set $[n]$ as $\pi = [\pi_1\, \cdots\, \pi_n]$, where $\pi_i:= \pi(i)$. A permutation $\pi \in S_n$ has a \emph{descent in position $k$} if $\pi(k) > \pi(k+1)$, and a permutation with at most one descent is called a \emph{Grassmann permutation}. The set of all Grassmann permutations with a descent in position $m$, together with the identity permutation, is denoted by $S^m_n$. 

 Given $\eta \in P_{mn}$, record the up-steps and side-steps of the partition from the bottom of the rectangle as $U(\eta)=\{u_1, \dots, u_m\}$ and $S(\eta) = \{s_1, \dots, s_{n-m}\}$. Define the associated permutation $\pi_\eta \in S_n^m$  by the following window
\begin{equation}\label{eq:perm}
\pi_\eta := [ u_1 \cdots u_m \,|\, s_1 \cdots s_{n-m}].
\end{equation}
Note that if $\eta$ is nonzero, then $\pi_\eta$ has exactly one descent in position $m$, as suggested by the vertical line in the window above. 
We define a collection $\left(x_{\pi_\eta} \right)$ of $m$ variables associated to the permutation $\pi_\eta$ by
\begin{equation}\label{eq:spec}
\left(x_{\pi_\eta} \right) :=  \left( t_{\pi_\eta(i)}\right)
\end{equation}
for $i \in [m]$. In particular, note that $\left(x_{\pi_\eta}\right)_i = t_{u_i}$ by definition.

\begin{theorem}[Theorem 5.4 \cite{IkedaNaruse}]\label{thm:facSchur}
For any partitions $\gamma \subseteq \eta \in P_{mn}$, we have
\begin{equation*}
\xi^\gamma(\eta) = s_\gamma(x_{\pi_\eta} | t)
\end{equation*}
\end{theorem}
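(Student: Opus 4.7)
The plan is to verify that the specialized factorial Schur polynomials $s_\gamma(x_{\pi_\eta}\,|\,t)$ satisfy the axioms that uniquely characterize the Kostant--Kumar localization functions $\xi^\gamma$, and conclude equality by uniqueness. Recall from \cite{KostantKumar} and \cite[Section 11.3]{Kumar} that the family $\{\xi^\gamma\}_{\gamma \in P_{mn}}$ is the unique system of polynomial-valued functions on $P_{mn}$ characterized by (a) $\xi^\gamma(\eta)$ is homogeneous of degree $|\gamma|$ in the $t_i$, (b) $\xi^\gamma(\eta) = 0$ whenever $\gamma \not\subseteq \eta$ (Lemma~\ref{lem:xisupport}), and (c) $\xi^\gamma(\gamma)$ equals the product of the positive $T$-weights of the tangent space to $X_\gamma$ at the $T$-fixed point indexed by $\gamma$. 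So it suffices to verify that $s_\gamma(x_{\pi_\eta}\,|\,t)$ satisfies (a)--(c).

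First I would handle the easier properties (a) and (c) directly from the tableau formula \eqref{eq:facSchur}. For (a), each factor $x_{T_\gamma(\beta)} - t_{T_\gamma(\beta)+j-i}$ specializes under $x_i \mapsto t_{u_i}$ to $t_{u_{T_\gamma(\beta)}} - t_{T_\gamma(\beta)+j-i}$, which is homogeneous of degree $1$ in $t$, so the sum over $SSYT_\gamma$ is homogeneous of degree $|\gamma|$, matching the degree of $\xi^\gamma$. For (c), when $\eta = \gamma$, a combinatorial check (comparing entries row by row to the up-step sequence of $\gamma$) shows that the only SSYT contributing a nonzero product is the superstandard filling with entry $i$ in every box of row $i$, and direct computation identifies its monomial with the standard product formula for $\xi^\gamma(\gamma)$ as the product of positive tangent weights.

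The main obstacle is property (b), the vanishing $s_\gamma(x_{\pi_\eta}\,|\,t) = 0$ whenever $\gamma \not\subseteq \eta$, which is the classical vanishing theorem for factorial Schur polynomials. The plan is to establish it combinatorially by exhibiting, for each SSYT $T_\gamma$ of shape $\gamma$, a box $\beta$ in row $i$ and column $j$ that witnesses the vanishing, i.e.~for which $u_{T_\gamma(\beta)} = T_\gamma(\beta)+j-i$. Such a witness is forced by a pigeonhole argument on the up-steps $U(\eta)$: if $\gamma \not\subseteq \eta$ then some row index $k$ satisfies $\gamma_k > \eta_k$, and the interlacing pattern $u_1 < \cdots < u_m$ of the up-steps of $\eta$, combined with the weakly increasing row entries and strictly increasing column entries of the SSYT, pins at least one factor in the product to zero. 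Alternatively, the vanishing follows cleanly from the factorial Jacobi--Trudi identity together with the standard observation that $h_k(x_{\pi_\eta}\,|\,t) = 0$ for $k > \eta_1$, reducing the problem to a determinantal calculation in which the rows become linearly dependent once $\gamma$ escapes $\eta$.

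Once (a)--(c) are verified, uniqueness in the Kostant--Kumar characterization forces $s_\gamma(x_{\pi_\eta}\,|\,t) = \xi^\gamma(\eta)$ for every pair $\gamma \subseteq \eta \in P_{mn}$, completing the proof. I expect the bookkeeping in the combinatorial proof of (b) to be the only genuinely delicate step; (a) and (c) reduce to routine inspection of \eqref{eq:facSchur}, and everything else is a formal invocation of the uniqueness theorem.
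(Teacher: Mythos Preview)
The paper does not actually prove this statement: Theorem~\ref{thm:facSchur} is quoted from \cite[Theorem~5.4]{IkedaNaruse} (with additional credit to \cite{KT} and \cite{LRS}) and used as a black box, so there is no proof in the paper to compare your proposal against.

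That said, your sketch has a genuine gap in the uniqueness step. Properties (a)--(c) as you state them do \emph{not} uniquely characterize the family $\{\xi^\gamma\}$ as arbitrary polynomial-valued functions on $P_{mn}$: one also needs the GKM divisibility conditions, equivalently the requirement that the function $\eta \mapsto f(\eta)$ lies in the image of the localization map $H^*_T(Gr(m,n)) \hookrightarrow \bigoplus_\eta \Lambda$. Without this, for any $\gamma$ with $|\gamma| \geq 1$ you can perturb $\xi^\gamma$ on partitions strictly containing $\gamma$ by an arbitrary degree-$|\gamma|$ polynomial while preserving (a)--(c). The standard route is therefore either to (i) verify directly that the specialized factorial Schurs satisfy the GKM conditions (a nontrivial but known computation), or (ii) first prove that the factorial Schur polynomials represent the equivariant Schubert classes in $H^*_T(Gr(m,n))$ under the presentation reviewed in Section~\ref{sec:facSchurs}, so that the localization identity follows by evaluating a class at a fixed point; this is closer to how \cite{IkedaNaruse} and \cite{KT} proceed. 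Your treatment of (a), (c), and the vanishing (b) is otherwise on the right track, but you should make explicit which of (i) or (ii) you are invoking, since that is where the actual content of the theorem lies.
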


We remark that the above statement differs from \cite[Theorem 5.4]{IkedaNaruse} by the sign ($-1)^{|\gamma|}$ because our convention for torus weights follows \cite{BMT}, in order to apply the equivariant rim hook rule in the proof of Theorem \ref{thm:main} later in Section \ref{sec:proof}. Our next and final lemma provides the critical link required to prove Proposition \ref{prop:PsiHL}, and thus the main theorem.

\begin{lemma}\label{lem:Psielem}
Fix any integers $1 \leq p \leq m$ and $0 \leq r \leq p$. 
For any partition $\eta \in P_{m',n+1}$, we have
\begin{equation*}
\Psi(\eta, p') = e_{p'}(x_{\pi_\eta}|t),
\end{equation*}
where $e_{p'}(x|t)$ is the factorial elementary symmetric polynomial in $x_1, \dots, x_{m'}$ and $t_1, \dots, t_{n+1}$.
\end{lemma}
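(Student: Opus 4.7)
The plan is to verify the identity by unpacking both sides and matching them up indexwise; the argument is essentially a change of variables and should involve no real obstacle beyond careful bookkeeping.

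First, I would expand $e_{p'}(x_{\pi_\eta}|t) = s_{(1)^{p'}}(x_{\pi_\eta}|t)$ using the SSYT formula \eqref{eq:facSchur}. Since the shape $(1)^{p'}$ consists of a single column of $p'$ boxes, a SSYT of this shape is determined by a strictly increasing column filling $1 \leq i_1 < i_2 < \cdots < i_{p'} \leq m'$, with $i_k$ placed in row $k$ and column $1$. Each such filling contributes
\[
\prod_{k=1}^{p'} \bigl( x_{i_k} - t_{i_k + 1 - k} \bigr).
\]
Substituting the specialization $(x_{\pi_\eta})_i = t_{\pi_\eta(i)} = t_{u_i}$ from \eqref{eq:spec} then gives
\[
e_{p'}(x_{\pi_\eta}|t) \;=\; \sum_{1 \leq i_1 < \cdots < i_{p'} \leq m'} \ \prod_{k=1}^{p'} \bigl( t_{u_{i_k}} - t_{i_k - k + 1} \bigr).
\]

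Next, I would transport $\Psi(\eta,p')$ into the same form via the bijection of Remark \ref{rem:iotadelta}. Re-parametrizing the sum \eqref{eq:Psiformula} by $\vec{\iota}_\delta = (\iota_1^\delta, \ldots, \iota_{p'}^\delta) \in \mathcal{A}_{p'}^\delta$ and using $\iota_j = \iota_j^\delta - (j-1)$ inside \eqref{eq:Psideg1}, I obtain
\[
\Psi(\eta,p') \;=\; \sum_{\vec{\iota}_\delta \in \mathcal{A}_{p'}^\delta} \ \prod_{j=1}^{p'} \bigl( t_{u_{\iota_j^\delta}} - t_{\iota_j^\delta - j + 1} \bigr).
\]
The final step is the verification that $\mathcal{A}_{p'}^\delta$ is precisely the index set of strictly increasing tuples $1 \leq i_1 < \cdots < i_{p'} \leq m'$ that appears in the expansion of $e_{p'}(x_{\pi_\eta}|t)$. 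This follows directly from Remark \ref{rem:iotadelta}, together with the elementary bound
\[
\iota_{p'}^\delta \;=\; \iota_{p'} + p' - 1 \;\leq\; (m-p+1) + (p-r) - 1 \;=\; m-r \;=\; m',
\]
and the obvious $\iota_1^\delta = \iota_1 \geq 1$. Under the identification $i_k = \iota_k^\delta$, the two summations and the two products agree term by term, proving the lemma.

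The only potential hazard is a miscount between the $\iota_j \leftrightarrow \iota_j^\delta$ indexing and the row conventions implicit in the factorial Schur formula \eqref{eq:facSchur}, but a direct substitution $i_k = \iota_k^\delta$ makes the matching transparent. Once Lemma \ref{lem:Psielem} is in hand, Proposition \ref{prop:PsiHL} is immediate by Theorem \ref{thm:facSchur} applied to $\gamma = (1)^{p'} \subseteq \lambda_\mu$, and Theorem \ref{thm:main} then follows by combining Proposition \ref{prop:PsiAB}, Theorem \ref{T:HuangLi}, and the equivariant rim hook rule (Theorem \ref{thm:eqrimhook}), with the horizontal-strip formula obtained from the vertical-strip case via the level-rank duality of Theorem \ref{thm:levelrank}.
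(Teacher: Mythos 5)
Your proposal is correct and follows essentially the same route as the paper's proof: expand $e_{p'}(x_{\pi_\eta}|t)$ over column-shaped SSYT via \eqref{eq:facSchur}, specialize $x_i \mapsto t_{u_i}$, and match summands term by term with $\Psi(\eta,p')$ through the bijection $\vec{\iota} \leftrightarrow \vec{\iota}_\delta$ of Remark \ref{rem:iotadelta}, using $\iota_j^\delta - j + 1 = \iota_j$. Your explicit check that $\iota_{p'}^\delta \leq m'$ is a slightly more careful spelling-out of what the paper delegates to that remark, but the argument is the same.
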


Note that Example \ref{ex:SSYT} below illustrates both the statement of Lemma \ref{lem:Psielem}, as well as the weight-preserving bijection which is the key ingredient in the proof.

\begin{proof} 
For notational convenience throughout this proof alone, we shall write $k:=p'$. 
We thus aim to show that $\Psi(\eta,k) = e_k(x_{\eta_\pi}|t)$ for any $\eta \in P_{m',n+1}$.

Recalling that $e_k(x|t)$ is the factorial Schur polynomial $s_{(1)^k}(x|t)$, formula \eqref{eq:facSchur} gives us
\begin{equation}\label{eq:eSSYT}
e_k(x|t) = \sum_{\vec{\ell} \in SSYT_{(1)^k}} \prod_{j=1}^k x_{\ell_j}-t_{\ell_j-j+1}.
\end{equation}
Here $SSYT_{(1)^k}$ denotes the set of all strictly increasing fillings of the column shape containing $k$ boxes using the alphabet $[m']$.  By recording only the fillings, we may thus identify $SSYT_{(1)^k}$ with the set $\mathcal A_k^\delta = \{ \vec{\ell}\in [m']^k \mid \ell_1 < \cdots < \ell_k \}$ from Remark \ref{rem:iotadelta}.  

For any $\vec{\ell} \in SSYT_{(1)^k}$, denote the corresponding summand by 
$e^{\vec{\ell}}_k(x|t) =  \prod_{j=1}^k x_{\ell_j}-t_{\ell_j-j+1}.$
 Enumerate the up-steps of $\eta$ as $U(\eta) = \{u_1, \dots, u_{m'}\}$, recorded from the bottom of the rectangle.
Specializing the variables $x_i \mapsto t_{\eta_\pi(i)} = t_{u_i}$ as prescribed by $x_{\eta_\pi}$ via \eqref{eq:spec}, we obtain
\begin{equation}\label{eq:ellSSYT}
e^{\vec{\ell}}_k(x_{\eta_\pi}|t) =  \prod_{j=1}^k t_{u_{\ell_j}}-t_{\ell_j-j+1}.
\end{equation}

On the other hand, since $SSYT_{(1)^k}$ and $\mathcal{A}_k^{\delta}$ are in bijection, the set $SSYT_{(1)^k}$ is also in bijection with $\mathcal A_k =  \{ \vec{\iota} \in [m-p+1]^{k} \mid \iota_1 \leq \cdots \leq \iota_{k} \}$ by Remark \ref{rem:iotadelta}. If we fix any $\vec{\iota}=(\iota_1, \dots, \iota_{k}) \in \mathcal A_{k}$, then the corresponding summand in expression \eqref{eq:Psiformula} for $\Psi(\eta, k)$ may be recorded via \eqref{eq:Psideg1} as 
\begin{equation*}
\Psi^{\vec{\iota}}(\eta, k) = \prod\limits_{j =1}^k t_{u_{\iota_j^\delta}}-t_{\iota_j},
\end{equation*}
where we recall from Remark \ref{rem:iotadelta} that $\iota_j^\delta=\iota_j+j-1$, and more generally that $\vec{\iota}_\delta = (\iota^\delta_1, \dots, \iota^\delta_{k}) = \vec{\iota}+(0,1,\dots, k-1)$. In particular, the map $\vec{\iota} \mapsto \vec{\iota}_\delta$ gives the bijection from $\mathcal A_k$ to $SSYT_{(1)^k}$ noted above. 

Now given any $\vec{\iota} \in \mathcal A_k$ and $j \in [k]$, consider the corresponding multiplicand $t_{u_{\iota_j^\delta}}-t_{\iota_j}$ in the product $\Psi^{\vec{\iota}}(\eta, k) $.  The $j^{\text{th}}$ multiplicand in the associated product $ e^{\vec{\iota}_\delta}_k(x_{\eta_\pi}|t)$ is then $ t_{u_{\iota^\delta_j}}-t_{\iota^\delta_j-j+1}=t_{u_{\iota_j^\delta}}-t_{\iota_j}$, since  $\iota_j^\delta=\iota_j+j-1$. Therefore, $\Psi^{\vec{\iota}}(\eta, k)  = e^{\vec{\iota}_\delta}_k(x_{\eta_\pi}|t)$ for any $\vec{\iota} \in \mathcal A_k$. Summing over all $\mathcal A_k$ and $SSYT_{(1)^k}$, respectively, we then have $\Psi(\eta,k)=e_k(x_{\eta_\pi}|t)$, which proves the lemma.  
\end{proof}

We now revisit our running example from Section \ref{sec:Localizations}, to illustrate both the statement of Lemma \ref{lem:Psielem} and the method of proof.

\begin{example}\label{ex:SSYT}
Fix $p=5$ and $r=3$, in which case $p' = 5-3=2$.  Consider $\eta = (8,8,3,0) \in P_{4,15}$, and recall from Example \ref{ex:cutjoin} that $\eta = \lambda_\mu$ for partitions\ $\mu \subseteq \lambda \in P_{7,15}$ so that here $m' = 7-3=4$.  Recall by \eqref{eq:Psifullex} in Example \ref{ex:Psiproduct} that
\begin{equation}\label{eq:Psiexample}
\Psi(\eta,2) = (t_5 - t_2)(t_{11} - t_2)+  (t_5 - t_2)(t_{12} - t_3)+ (t_{11} - t_3)(t_{12} - t_3).
\end{equation}

On the other hand, we now compute $e_2(x_{\pi_\eta}\vert t)$ directly from the definition.  The set $SSYT_{(1)^2}$ consists of the 6 strictly increasing fillings of the column shape $(1)^2$ using the alphabet $[m'] = [4]$, which we depict as
\[
\begin{tikzpicture}[scale=.45]

\draw[-]
(1,1) -- (1,3) -- (2,3) -- (2,1) -- cycle
(1,2) -- (2,2)

(3,1) -- (3,3) -- (4,3) -- (4,1) -- cycle
(3,2) -- (4,2)

(5,1) -- (5,3) -- (6,3) -- (6,1) -- cycle
(5,2) -- (6,2)

(7,1) -- (7,3) -- (8,3) -- (8,1) -- cycle
(7,2) -- (8,2)

(9,1) -- (9,3) -- (10,3) -- (10,1) -- cycle
(9,2) -- (10,2)

(11,1) -- (11,3) -- (12,3) -- (12,1) -- cycle
(11,2) -- (12,2)
;

\node[scale=1] at (1.5,2.5) {$1$};
\node[scale=1] at (1.5,1.5) {$2$};

\node[scale=1] at (3.5,2.5) {$1$};
\node[scale=1] at (3.5,1.5) {$3$};

\node[scale=1] at (5.5,2.5) {$1$};
\node[scale=1] at (5.5,1.5) {$4$};

\node[scale=1] at (7.5,2.5) {$2$};
\node[scale=1] at (7.5,1.5) {$3$};

\node[scale=1] at (9.5,2.5) {$2$};
\node[scale=1] at (9.5,1.5) {$4$};

\node[scale=1] at (11.5,2.5) {$3$};
\node[scale=1] at (11.5,1.5) {$4$};

\end{tikzpicture}\ \ .
\]
If we encode each of these SSYT as a vector $\vec{\ell} = (\ell_1,\ell_2)$ where $\ell_1 < \ell_2$ denotes the filling, then by \eqref{eq:eSSYT} we have
\begin{equation}\label{eq:eSSYTex}
e_2(x|t) = \sum_{\vec{\ell} \in SSYT_{(1)^2}} \prod_{j=1}^2 x_{\ell_j}-t_{\ell_j-j+1}.
\end{equation}

Recall that for $\eta = (8,8,3,0) \in P_{4,15}$, we have $U(\eta) = \{1,5,11,12\} = \{u_1,u_2,u_3,u_4\}$ so that 
\[\pi_\eta = [1,5,11,12 \mid 2, 3,4,6,7,8,9,10,13,14,15]\]
by formula \eqref{eq:perm}.
Specializing the variables $x_i \mapsto t_{\pi_\eta(i)}$ in this example thus sends $x_1 \mapsto t_1$.  Therefore, the 3 left-hand SSYT, all of which satisfy $\ell_1 = 1$, correspond to a product having $(t_1-t_1)$ as a factor; that is, these 3 left-hand SSYT contribute zero to the sum in expression \eqref{eq:eSSYTex} for $e_2(x_{\pi_\eta}\vert t)$.  Recall from Example \ref{ex:Psiproduct} that  the polynomial $\Psi(\eta,2)$ also contains 3 zero terms corresponding to the vectors $(1,1),(1,2),(1,3) \in \mathcal A_2$. Moreover, for these 3 vectors $\vec{\iota} \in \mathcal A_2$, the associated vector $\vec{\iota}_\delta \in \mathcal A_2^\delta$ gives the fillings of the 3 left-hand SSYT.

For the 3 right-hand SSYT, note more generally that $x_i \mapsto t_{u_i}$ when we specialize $e_2(x_{\pi_\eta}\vert t)$. Indexing the summands of $e_2(x_{\pi_\eta}|t)$ as in \eqref{eq:ellSSYT} then gives us
\begin{align*}
e^{(2,3)}_2(x_{\pi_\eta}|t) &= (t_{u_{2}} - t_{2-1+1})(t_{u_3} - t_{3-2+1}) = (t_5 - t_2)(t_{11} - t_2),\\
e^{(2,4)}_2(x_{\pi_\eta}|t) &=  (t_{u_{2}} - t_{2-1+1})(t_{u_4} - t_{4-2+1}) = (t_5 - t_2)(t_{12} - t_3), \\
e^{(3,4)}_2(x_{\pi_\eta}|t) & = (t_{u_{3}} - t_{3-1+1})(t_{u_4} - t_{4-2+1}) = (t_{11} - t_3)(t_{12} - t_3).
\end{align*}
Taking the sum of the three above expressions and comparing with \eqref{eq:Psiexample}, we see that indeed $e_2(x_{\pi_\eta}\vert t) = \Psi(\eta,2)$ term by term, confirming the statement of Lemma \ref{lem:Psielem} in this example. Moreover, the 3 nonzero terms in $\Psi(\eta,2)$ correspond to the 3 vectors $(2,2),(2,3),(3,3) \in \mathcal A_2$ from Example \ref{ex:Psiproduct}, whose respective associates in $\mathcal A_2^\delta$ via Remark \ref{rem:iotadelta} give the fillings $(2,3),(2,4),(3,4)$ of the 3 right-hand SSYT above, completing our illustration of the weight-preserving bijection in the proof of Lemma \ref{lem:Psielem}.
\end{example}

Amusingly, there is a dual version of Lemma \ref{lem:Psielem}, obtained by noting that $\mathcal A_k$ is also in natural bijection with the set $SSYT_{(k)}$ of row-shaped semi-standard Young tableaux. This observation gives us a similar statement for the complete homogeneous factorial Schur functions, although the localization is dual in a sense, and the geometric interpretation is less clear.

\begin{remark}\label{lem:Psihomog}
Fix any integers $1 \leq p \leq m$ and $0 \leq r \leq p$.  For any partition $\eta \in P_{m',n+1}$, we have
\begin{equation*}
\Psi(\eta, p') =  (-1)^{p'} h_{p'}(x_{id} | t_{\pi_\eta}),
\end{equation*}
where $h_{p'}(x|t)$ is the factorial homogeneous symmetric polynomial in $x_1, \dots, x_{m'}$ and $t_1, \dots, t_{n+1}$, and we instead specialize the torus weights via $t_i \mapsto t_{\pi_\eta(i)}$, whereas $x_i \mapsto t_i$.
\end{remark}

\subsection{Proof of the equivariant quantum Pieri rule on cylindric shapes}\label{sec:proof}

We are now able to prove the key proposition upon which the proof of Theorem \ref{thm:main} relies.

\begin{proof}[Proof of Proposition \ref{prop:PsiHL}]
Fix an integer $1 \leq p \leq m$ and a partition $\mu \in P_{mn}$.  Consider any partition $\lambda \in P_{m,n+1}$ such that the skew shape $\lambda/\mu = v^r$ for some integer $0 \leq r \leq p$.  Apply Lemma \ref{lem:Psielem} to $\lambda_\mu \in P_{m',n+1}$ to obtain 
\begin{equation}\label{eq:Psielem}
\Psi(\lambda_\mu, p') =  e_{p'}(x_{\pi_{(\lambda_\mu)}}|t).
\end{equation}
In the case $(1)^{p'}\subseteq \lambda_\mu$, applying Theorem \ref{thm:facSchur} to \eqref{eq:Psielem}, we thus have
\begin{equation*}
\Psi(\lambda_\mu, p') =  e_{p'}(x_{\pi_{(\lambda_\mu)}}|t)= s_{(1)^{p'}}(x_{\pi_{(\lambda_\mu)}} | t) = \xi^{(1)^{p'}}_{m'}(\lambda_\mu),
\end{equation*}
confirming Proposition \ref{prop:PsiHL} in this case.

If $(1)^{p'} \not\subseteq \lambda_\mu$, then by Lemma \ref{lem:xisupport}, we know that $\xi^{(1)^{p'}}_{m'}(\lambda_\mu)=0$.  The condition $(1)^{p'} \not\subseteq \lambda_\mu$ implies that the number of nonzero parts of $\lambda_\mu \in P_{m',n+1}$ is strictly less than $p'$. In particular, for all $\vec{\iota} \in \mathcal A_{p'}$, we have $(\lambda_\mu)_{\iota_1} = 0$.  By \eqref{eq:Psizero} and \eqref{eq:PsiAB3} in the proof of Proposition \ref{prop:PsiAB}, we know that $\Psi(\lambda_\mu, p')=0$ in this case. Therefore, if $(1)^{p'} \not\subseteq \lambda_\mu$, we have 
\begin{equation*}
0=\Psi(\lambda_\mu, p') =  \xi^{(1)^{p'}}_{m'}(\lambda_\mu), 
\end{equation*}
as required to complete the proof of Proposition \ref{prop:PsiHL}.
\end{proof}

Equipped with Proposition \ref{prop:PsiHL}, we are now prepared to prove the main theorem of this paper.

\begin{proof}[Proof of Theorem \ref{thm:main}]
We first prove the rule for multiplying by a Schubert class indexed by a column-shaped partition. Fix an integer $1 \leq p \leq m$ and a partition $\mu \in P_{mn}$.    Consider any partition $\lambda \in P_{m,n+1}$ such that the skew shape $\lambda/\mu = v^r$ for some integer $0 \leq r \leq p$. Applying Proposition \ref{prop:PsiAB}, Proposition \ref{prop:PsiHL}, and Theorem \ref{T:HuangLi} in turn, we obtain
\begin{equation*}
 \sum_{v^r \rightarrow v^p}\prod_{\alpha \in v^p \ba v^r}\wtv_{\mu}(\alpha) = \Psi( \lambda_\mu,p') =  \xi^{(1)^{p'}}_{m'}(\lambda_\mu) = c_{(1)^p,\mu}^{\lambda},
\end{equation*}
where $c_{(1)^p,\mu}^{\lambda}$ is an equivariant Littlewood-Richardson coefficient in $H^*_{T}(Gr(m,n+1))$.
Further, note that in the Pieri case, $\sigma_{(1)^p} \circ \sigma_\mu \in H^*_{T}(Gr(m,n+1))$ is the same as the product $\sigma_{(1)^p} \circ \sigma_\mu \in H^*_{T}(Gr(m,2n-1))$ required to apply the equivariant rim hook rule from \cite{BMT}, reviewed here as Theorem \ref{thm:eqrimhook}.

The argument now naturally divides into two cases: $\lambda \in P_{mn}$ and $\lambda \notin P_{mn}$.
If $\lambda \in P_{mn}$, then by the equivariant rim hook rule, $c_{(1)^p,\mu}^{\lambda} = c_{(1)^p,\mu}^{\lambda,0}$ for $\sigma_{(1)^p} \star \sigma_\mu \in QH^*_{T}(Gr(m,n))$ as well, since rim hook reduction acts as the identity both on $\sigma_\lambda$ and the torus weights $t_1, \dots, t_n$ in this case. The formula for the classical terms in Theorem \ref{thm:main} thus follows.

Now suppose that $\lambda \notin P_{mn}$.   Since the parts of $\lambda=(\lambda_1, \dots, \lambda_m)$ are weakly decreasing and $\lambda \in P_{m,n+1}\backslash P_{mn}$, we must have $\lambda_1 = n-m+1$.  Recall by Theorem \ref{thm:eqrimhook} that if the $n$-core $\nu$ of $\lambda$ is not in $P_{mn}$, then the equivariant rim hook rule sends $\sigma_\lambda \mapsto 0$, and this term does not contribute to the product $\sigma_{(1)^p} \star \sigma_\mu \in QH^*_{T}(Gr(m,n))$. We thus reduce to the case where $\nu \in P_{mn}$. The $n$-core $\nu \in P_{mn}$ in this case if and only if the partition $\lambda \in P_{m,n+1}$ contains a removable $n$-rim hook, in which case we must have $\lambda_m \geq 1$ as well.  In the Pieri case, we can remove exactly one $n$-rim hook of height $m$, so that in the statement of Theorem \ref{thm:eqrimhook}, we have $\varepsilon_1 = m$.  The equivariant rim hook rule thus sends $\sigma_\lambda \mapsto (-1)^{\varepsilon_1-m} q\sigma_\nu = q \sigma_\nu$.  Further, recall from our discussion in Section \ref{sec:QPieri} that since $\lambda$ and $\nu$ are related by removing a single $n$-rim hook with these coordinates, then the closed loops $\lambda[0]=\nu[1]$ on the cylinder $\mathcal C_{m,n+1}$.

Since $\lambda \in P_{m,n+1}$, for any addable box $\alpha$ in an extension $v^r \rightarrow v^p$, the row number satisfies $r(\alpha) \leq m<n$, and the number of boxes below $\alpha$ also satisfies $b(\alpha) < m<n$.   In addition, since $\mu \in P_{mn}$, while $\lambda_1 = n-m+1$, we know that the skew shape $\lambda/\mu$ contains a box in the top row of the rectangle.  In particular, any addable box $\alpha \in v^p \ba v^r$ sits in or below row $m-1$, indexing rows from the bottom of the rectangle.  Therefore, all up-step indices satisfy $u(\alpha) \leq n-1<n$.
Since the equivariant rim hook rule sends $t_i \mapsto t_{i(\text{mod}\, n)}$ and each of these three statistics is strictly less than $n$ for any addable box $\alpha \in v^p \ba v^r$, each torus weight $t_{u(\alpha)}-t_{r(\alpha)-b(\alpha)}$ occurring in $c_{(1)^p,\mu}^\lambda$ is mapped to itself by the equivariant rim hook rule. Therefore, in case $\lambda \notin P_{mn}$, we have
\begin{equation*}
c_{(1)^p,\mu}^{\lambda} =  \sum_{v^r \rightarrow v^p } \prod_{\alpha \in v^p \ba v^r}\wtv_{\mu}(\alpha) =c_{(1)^p,\mu}^{\nu,1},
\end{equation*}
where the skew shape $v^r = \lambda/\mu = \nu[1]/\mu[0] = \nu/1/\mu$.  The formula for the quantum terms in the product $\sigma_{(1)^p} \star \sigma_\mu \in QH^*_{T}(Gr(m,n))$ in Theorem \ref{thm:main} thus follows by re-indexing.

We now deduce the rule for multiplying by a Schubert class indexed by a row-shaped partition from the column-shape rule via level-rank duality. Fix an integer $1 \leq k \leq n-m$ and a partition $\mu \in P_{mn}$.    Consider any partition $\lambda \in P_{mn}$ such that the skew shape $\lambda/d/\mu = h^r$ for some integer $0 \leq r \leq k$ and $d \in \{0,1\}$.  Let $\alpha \in h^k \ba h^r$ be an addable box in an extension $h^r \rightarrow h^k$. 

Since $\lambda/d/\mu$ is a horizontal $r$-strip, then the transposed skew shape $\lambda'/d/\mu'$ is a vertical $r$-strip $v^r$ with a corresponding extension $v^r \rightarrow v^k$.  Moreover, the addable box $\alpha \in h^k \ba h^r$ corresponds to a unique addable box $\beta \in v^k \ba v^r$ in the image of the transpose map.  Since $\alpha$ and $\beta$ are related by the transpose, by Definition \ref{def:addboxstats} we automatically have $r(\beta) = c(\alpha)$ and $b(\beta) = rt(\alpha)$, and therefore $r(\beta)-b(\beta) = c(\alpha)-rt(\alpha)$. In addition, the up-step $u(\beta)$ corresponds precisely to the side-step $s(\alpha)$ under the transpose map, both of which are enumerated from the bottom of the rectangle, and thus $u(\beta)+s(\alpha)=n+1$.

Under the involution on torus weights from level-rank duality as in Theorem \ref{thm:levelrank} we now have
\[-w_0: 
\begin{cases}
\quad \ \ \  \quad t_{u(\beta)} \mapsto -t_{s(\alpha)} \\
-t_{r(\beta)-b(\beta)} \mapsto t_{n+1-(c(\alpha)-rt(\alpha))}.
\end{cases}
\]
Comparing with Definition \ref{def:wtaddbox} gives us $-w_0: \wtv_{\mu'}(\beta) \mapsto \wth_{\mu}(\alpha)$.  
Having already proved above that
\[ c_{(1)^k,\mu'}^{\lambda',d} = \sum\limits_{v^r\rightarrow v^k} \prod_{\beta \in v^k \ba v^r} \wtv_{\mu'}(\beta)\]
in $QH^*_T(Gr(n-m,n))$, applying the level-rank duality from Theorem \ref{thm:levelrank} to both sides of this expression yields
\[ c_{(k),\mu}^{\lambda,d} = \sum\limits_{h^r\rightarrow h^k} \prod_{\alpha \in h^k \ba h^r} \wth_{\mu}(\alpha),\]
as required to complete the proof of the formula for $\sigma_{(k)} \star \sigma_{\mu} \in QH^*_T(Gr(m,n))$ in Theorem \ref{thm:main}.
\end{proof}

\bibliographystyle{alphanum}

\bibliography{BEMT-CylindricEqQPieri-References}

\end{document}